\DeclareFontFamily{T1}{qzc}{}
\DeclareFontShape{T1}{qzc}{m}{it}{<-> s * [1.2] ec-qzcmi}{}
\DeclareMathAlphabet{\mcal}{\encodingdefault}{qzc}{m}{it}
\let\appendixpagenameorig\appendixpagename
\renewcommand{\appendixpagename}{\Large\appendixpagenameorig}
\newlength\axiomthickness
\newlength\nonaxiomthickness
\newcommand\linkthickness{.4pt}
\newcommand\cutthickness{\linkthickness}
\newcommand\thickaxiomlines{\prflinethickness=\axiomthickness}
\newcommand\nonthickaxiomlines{\prflinethickness=\nonaxiomthickness}
\newcommand\inlinerulelabel[1]{{\sf #1}\@\xspace}
\newcommand\axrulename{ax}
\newcommand\cutrulename{cut}
\newcommand\inlinecutrulelabel{\inlinerulelabel{cut}}
\newcommand\rulelabel[1]{\raisebox{1.5pt}{\small\inlinerulelabel{#1}}}
\newcommand\axrulelabel{\rulelabel{\axrulename}}
\newcommand\cutrulelabel{\rulelabel{\cutrulename}}
\newcommand\unaryrule[2]{\prftree{#1}{\pfseq{#2}}}
\newcommand\drawaxiomrule[1]{\unaryrule{\pfseq{}}{\pfseq{#1}}}
\newcommand\axiomrule[1]{\nonthickaxiomlines\drawaxiomrule{#1}}
\newcommand\axiomruleright[1]{\nonthickaxiomlines\unaryruleright{\axrulelabel}{\pfseq{}}{\pfseq{#1}}}
\newcommand\thickaxiomrule[1]{\thickaxiomlines\drawaxiomrule{#1}\nonthickaxiomlines}
\newcommand\axruleP[1]{\axiomrule{P_{#1}\com\pp_{#1}}}
\newcommand\coaxruleP[1]{\axiomrule{\pp_{#1}\com P_{#1}}}
\newcommand\unaryruleleft[3]{\prftree[l]{$#1$}{#2}{\pfseq{#3}}}
\newcommand\unaryruleright[3]{\prftree[r]{$#1$}{#2}{\pfseq{#3}}}
\newcommand\binaryrule[3]{\prftree{#1}{#2}{\pfseq{#3}}}
\newcommand\binaryruleleft[4]{\prftree[l]{$#1$}{#2}{#3}{\pfseq{#4}}}
\newcommand\binaryruleright[4]{\prftree[r]{$#1$}{#2}{#3}{\pfseq{#4}}}
\newcommand\tightbinaryrule[4]{{\setlength\prfinterspace{#4}\binaryrule{#1}{#2}{#3}}}
\newcommand\forallruleright[2]{\unaryruleright{\forall}{#1}{#2}}
\newcommand\forallruleleft[2]{\unaryruleleft{\forall}{#1}{#2}}
\newcommand\forallrule[2]{\forallruleright{#1}{#2}}
\newcommand\existsruleright[2]{\unaryruleright{\exists}{#1}{#2}}
\newcommand\existsruleleft[2]{\unaryruleleft{\exists}{#1}{#2}}
\newcommand\existsrule[2]{\existsruleright{#1}{#2}}
\newcommand\tensorruleleft[3]{\binaryruleleft{\tensor}{#1}{#2}{#3}}
\newcommand\tensorruleright[3]{\binaryruleright{\tensor}{#1}{#2}{#3}}
\newcommand\tensorrule[3]{\tensorruleright{#1}{#2}{#3}}
\newcommand\parruleright[2]{\unaryruleright{\parr}{#1}{#2}}
\newcommand\parruleleft[2]{\unaryruleleft{\parr}{#1}{#2}}
\newcommand\parrule[2]{\parruleright{#1}{#2}}
\newcommand\existslink[3]{\unaryruleright{{\exists_{#1}}}{#2}{#3}}%
\newsavebox{\cutlabelbox}\sbox{\cutlabelbox}{\cutrulelabel}
\newcommand\cutlabel{\usebox{\cutlabelbox}}
\newcommand\cutrule[3]{\binaryruleright{\cutlabel}{#1}{#2}{#3}}
\newcommand\displayexistsrule{\existsrule{\Gammacom\,\Asub}{\Gammacom\,\ex x A}}
\newcommand\openhyp[1]{\color{white}\prftree{\color{black}\pfseq{#1}}}
\newcommand\raiseproof[1]{{\color{white}\unaryrule{\color{black}#1}{}}}
\def\today{\number\day~%
 \ifcase \month \or January\or February\or March\or April\or May\or June\or
   July\or August\or September\or October\or November\or December\fi\space
 \number\year}
\newcommand\m[1]{\mkern-#1mu}
\newcommand\mm[1]{\mkern#1mu}
\newcommand\nth{^{\text{th}}}
\newcommand\fomll{MLL1\@\xspace}
\newcommand\fomllplus{MLL1$^{\sqcup}$\@\xspace}
\newcommand\foall{ALL1\@\xspace}
\newcommand\onorof{of\@\xspace}
\newcommand\eg{\emph{e.g.}\@\xspace}
\newcommand\ie{\emph{i.e.}\@\xspace}
\newcommand\cf{\emph{cf.}\@\xspace}
\newcommand*\etc{%
    \@ifnextchar{.}%
        {etc}%
        {etc.\@\xspace}%
}
\newcommand\defn[1]{{\textit{\textbf{#1}}}}
\newcommand\myitem[1]{\item[\textnormal{(#1)}]}
\renewcommand{\implies}{\Rightarrow}
\newcommand{\shorteq}{\raisebox{.5pt}{$\mm3\psscalebox{.8 .8}{=}\mm3$}}
\newcounter{BoxedFigCounter}
\newlength{\figindent}\setlength{\figindent}{4ex}
\newlength{\tw}\setlength{\tw}{\textwidth}
\newcommand{\boxedfigt}[4]%
{\stepcounter{BoxedFigCounter}\begin{figure}[#1]\vspace{-1.5ex}\begin{flushleft}\rule{5mm}{0.4pt}\hspace*{0.3mm}\raisebox{-3mm}{\rule{0mm}{6mm}}
\raisebox{-0.8mm}{\bf \,#3\,}{\rule{0mm}{6mm}}
\hrulefill{\rule{0mm}{6mm}}
\raisebox{-0.8mm}{\bf \,$\mm1$\emph{\normalsize Figure #2\,}$\m6$}%
\raisebox{-3mm}{\rule{0mm}{6mm}}\hspace*{1.3mm}\rule{5mm}{0.4pt}\raisebox{-3mm}{\rule{0mm}{6mm}}%
\vspace*{-3.4mm}
\begin{tabular}{@{}|@{\hspace{\figindent}}c@{\hspace{\figindent}}|@{}}\begin{minipage}[b]{\tw}\vspace*{5mm}\begingroup
#4\protect\rule{0mm}{1mm}\endgroup\end{minipage}\\
\hline\end{tabular}\end{flushleft}\vspace*{-3.5ex}\end{figure}}
\theoremstyle{plain}
\newtheorem{lemma}{Lemma}
\newtheorem{proposition}{Proposition}
\newtheorem{theorem}{Theorem}
\newenvironment{proof}{\begin{trivlist}\item{}\normalfont\textit{Proof.}}{\hfill$\square$\end{trivlist}}
\newenvironment{proofof}[1]{\begin{trivlist}\item{}\normalfont\textit{Proof of #1.}}{\hfill$\square$\end{trivlist}}
\DeclareSymbolFont{oldsymbols}{OMS}{cmsy}{m}{n}
\DeclareMathSymbol\wedge\mathbin{oldsymbols}{"5E}%
\DeclareMathSymbol\vee\mathbin{oldsymbols}{"5F}
\DeclareMathSymbol\neg\mathord{oldsymbols}{"3A}
\newcommand\bigleft{\left(\strut}
\newcommand\bigright{\strut\right)}
\newcommand{\tensor}{\otimes}
\newcommand{\gap}[1]{\hspace{#1ex}}
\newcommand{\nohang}[1]{\raisebox{0ex}[\height][0ex]{$#1$}}
\newcommand{\nohangs}[1]{\raisebox{0ex}[1.5ex][0ex]{$#1$}}
\newcommand{\alongbaseline}[1]{\raisebox{0ex}[0ex][0ex]{#1}}
\newcommand{\heightdepth}[3]{\raisebox{0ex}[#1ex][#2ex]{$#3$}}
\newcommand{\pfseq}[1]{\heightdepth{2}{.8}{#1}}
\newcommand\diredgestyle[1]{\psset{arrows=#1,nodesep=2pt,arrowsize=3pt 2,arrowinset=.4,arrowlength=.8,linewidth=.5pt}}
\newcommand\diredgesymb[1]{\raisebox{3pt}{\!\!\begin{psmatrix}[colsep=2.6ex]\rnode{l}{\rule{0pt}{1.2ex}}&\rnode{r}{\rule{0pt}{1.2ex}}%
\diredgestyle{#1}\ncline{l}{r}\end{psmatrix}\!\kern-.5pt}}
\newcommand\toedge{\mathrel{\diredgesymb{->}}}%
\newcommand\graph{\mcal{G}}
\newcommand\graphof[1]{\graph(#1)}
\newcommand\glambda{\graph(\lambda)}
\newcommand\mllencode[1]{{#1}_{_{^{\textsf{m}\m5}}}}
\newcommand\frameof[1]{\mllencode{#1}}
\newcommand{\dual}[1]{\overline{#1}}
\newcommand{\ddual}[1]{\raisebox{0pt}[0ex][0ex]{\(\dual{\dual{#1}}\)}}
\newcommand{\dualop}{(\overline{\rule{0ex}{1.5ex}\rule{1.2ex}{0ex}})}
\newcommand{\shortmapsto}{\psscalebox{.8 1}{\ensuremath\mapsto}}
\renewcommand{\ll}{l\m2'}
\newcommand\Aparr{A\m2\parr}
\renewcommand\v[1]{\vspace*{#1ex}}
\newcommand\vv[1]{\vspace*{-#1ex}}
\newcommand\h[1]{\hspace*{#1ex}}
\newcommand\hh[1]{\hspace*{-#1ex}}
\newcommand\assign{\mm1\shortmapsto\mm1}
\renewcommand\gets[2]{#1\assign #2}
\newcommand\openU{[\mm2}
\newcommand\closeU{\mm2]}
\newcommand\unifier[1]{\openU #1 \closeU}
\newcommand\assignment[1]{\unifier{#1}}
\newcommand\substitute[1]{\unifier{#1}}
\newcommand\substituteto[2]{\m1\unifier{\m2\gets{#1}{#2}\m2}}%
\newcommand\ueg{\openU \gets v x, \gets w {gu} , \gets y \hza \closeU}%
\newcommand\xx{\dot x}%
\newcommand\treestyle{\psset{nodesepA=3pt,nodesepB=3pt,arrows=-}}
\newcommand\dirtreestyle{\psset{nodesepA=3pt,nodesepB=3pt,arrows=->}}
\newcommand\treexy[5]{%
\rput(#1,0){\rput[B](0,0){\Rnode{#3}{\nohang{#4}}}\rput(0,#2){#5}}}
\newcommand\treey[4]{%
\treexy{0}{#1}{#2}{#3}{#4}}
\newcommand\leaf[3]{%
\rput[B](#1,0){\Rnode{#2}{\nohang{#3}}}}
\newcommand\linkarmseps[5]{\ncbar[angleA=90,arm=#1pt,nodesepA=#2pt,nodesepB=#3pt,linewidth=\linkthickness]{#4}{#5}}
\newcommand\linkarm[3]{\linkarmseps{#1}{2}{2}{#2}{#3}}
\newcommand\link[2]{\linkarm{5}{#1}{#2}}
\newcommand\cutlinkarm[3]{\ncbar[angleA=-90,arm=#3,nodesep=1.5pt,linewidth=\cutthickness]{#1}{#2}}
\newcommand\cutlink[2]{\cutlinkarm{#1}{#2}{6pt}}
\newcommand\lnode[2]{\psDefBoxNodes{#1}{#2}}
\newcommand\drawgirlink[3]{\ncbar[angleA=90,nodesep=3.5pt,arm=0,linewidth=#3]{#1:tl}{#2:tr}}
\newcommand\thingirlink[2]{\drawgirlink{#1}{#2}{\nonaxiomthickness}}
\newcommand\girlink[2]{\drawgirlink{#1}{#2}{\axiomthickness}}
\newcommand\chunkcut[2]{\ncbar[angleA=-90,nodesep=5.78pt,arm=0,linewidth=\nonaxiomthickness]{#1:tl}{#2:tr}}
\newcommand\thetaprime{\theta\m1'}
\newcommand\sigmaprime{\sigma'}
\newcommand\ex[1]{\exists\mm{0.2} #1\mm1}
\newcommand\all[1]{\forall\mm{0.2} #1\mm1}
\newcommand\aligntop[1]{\vtop{\vskip 0pt \vskip -\ht\strutbox #1 \vskip -\dp\strutbox}}%
\newcommand\column[1]{\begin{tabular}{@{}c@{}}#1\end{tabular}}
\newcommand\GnamedForallBit[4]{%
{
\unaryrule{\psDefBoxNodes{#1}{\nohangs{#3#4}}}{\psDefBoxNodes{#2}{\nohangs{\all #4#3#4}}}
}}
\newcommand\GnamedTensorBit[7]{%
\newcommand\leftLit{\nohangs{#3#7}}\newcommand\rightLit{\nohangs{#4#5#7#7}}\newcommand\tensoredLits{\nohangs{\leftLit\tensor\rightLit}}%
\unaryrule
{
  \tightbinaryrule
  {\psDefBoxNodes{#1}{\leftLit}}
  {\psDefBoxNodes{#2}{\rightLit}}
  {\tensoredLits}
  {1.9ex}
}
{\nohangs{\ex #6(#3#6\tensor #4#5#6#6)}}}
\newcommand\GnamedExistsBit[6]{%
{\unaryrule%
  {\psDefBoxNodes{#1}{\nohangs{#3#4#5#5}}}
  {\psDefBoxNodes{#2}{\nohangs{\ex #6#3#6}}}}}
\newcommand\gx{\dot x}
\newcommand\gxx{\ddot x}
\newcommand\gxxx{\hat x}
\newcommand\gxxxx{\tilde x}
\newcommand\GexponentialChunk[9]{%
\begin{array}[t]{@{}c@{\;\;}c@{\;\;}c@{}}
  \aligntop{\GnamedForallBit{forallLit}{#1}{#3}{#8}}
  &
  \aligntop{\GnamedTensorBit{tensorLeftLit}{tensorRightLit}{#4}{#5}{#7}{#8}{#8}}
  &
  \aligntop{\GnamedExistsBit{existsTop}{#2}{#6}{#7}{#8}{#9}}
  \thingirlink{forallLit}{tensorLeftLit}
  \thingirlink{tensorRightLit}{existsTop}
\end{array}}
\newcommand\colouredaxiomrule[2]{\color{#1}\thickaxiomrule{\color{black}#2}}
\newcommand\BigEgLeftColor{blue}
\newcommand\BigEgRightColor{red}
\newcommand\bone{u}
\newcommand\btwo{v}
\newcommand\bthree{w}
\newcommand\bfour{x}
\newcommand\bfive{y}
\newcommand\bsix{z}%
\newcommand{\tone}{(g\bone,f\bfour,a)}
\newcommand{\tonep}{(g\bone,f\btwo,a)}
\newcommand{\tonepp}{(g\bone,f\bfour,a)}
\newcommand{\tonexx}{(\bthree,f\bfour,a)}
\newcommand\hza{h(\bsix,\m2 a)}
\newcommand{\ttwo}{(\hza)}
\newcommand{\ttwop}{(\bfive)}
\newcommand\bigproofeg{
\existsrule
  {
      \tensorrule
      {
           \forallrule
           {
                \existsrule
                {
                     \forallrule
                     {
                          \existsrule
                          {
                             \colouredaxiomrule\BigEgLeftColor{\pp \tone\com\, \mm5 P \tone}
                          }
                          {\ex \btwo \pp \tonep\com \, \mm5 P \tone}
                     }
                     {\ex \btwo \pp \tonep\com \, \mm5 \all \bfour P \tonepp}
                }
                {\ex \btwo \pp \tonep\com \, \mm5 \ex \bthree \all \bfour P \tonexx}
           }
           {\all \bone \ex \btwo \pp \tonep\com \, \mm5 \ex \bthree \all \bfour P \tonexx}
      }
      {
         \colouredaxiomrule\BigEgRightColor{Q \ttwo\com \, \mm5 \qq\ttwo}
      }
      {\all \bone \ex \btwo \pp \tonep\com \, \mm5 \bigleft \ex \bthree \all \bfour P \tonexx \bigright \tensor Q \ttwo\com \, \mm5 \qq \ttwo}
  }
  {\all \bone \ex \btwo \pp \tonep\com \, \mm5 \bigleft \ex \bthree \all \bfour P \tonexx \bigright \tensor Q \ttwo\com \, \mm5 \ex \bfive \qq \ttwop}
}
\newcommand\quadraticSeqCalcCutElimEgConclusionTo[1]{\ovdash (\cdots((P_1\tensor P_2)\tensor P_3)\cdots )\tensor P_{#1}\com \pp_1\com \ldots \com \pp_{#1}}
\newcommand\quadraticSeqCalcCutElimEgConclusion{\quadraticSeqCalcCutElimEgConclusionTo{n}}
\newcommand\quadraticSeqCalcPiSubFour{
  \tensorrule{
    \tensorrule{
      \tensorrule{\axruleP1}{\axruleP2}{P_1\tensor P_2\com \pp_1\com\pp_2}
    }
    {\axruleP3}
    {(P_1\tensor P_2)\tensor P_3\com \pp_1\com \pp_2 \com \pp_3}
  }
  {\axruleP4}
  {((P_1\tensor P_2)\tensor P_3)\tensor P_4\com \pp_1\com \pp_2 \com \pp_3\com \pp_4}
}
\newcommand\quadraticSeqCalcPiSubn{
  \begin{array}{c}
    \hspace{-10ex}
    \quadraticSeqCalcPiSubFour
    \hspace{15ex}
    \\[.5ex]
    \rput(-1.1,0){\psline[linewidth=1.5pt,linestyle=dotted]{-}(-.1,.2)(.1,-.2)}
    \\[1ex]
    \hspace{10ex}
    \tensorrule{
      \tensorrule{\quadraticSeqCalcCutElimEgConclusionTo{n-2}}{\axruleP{n-1}}{\quadraticSeqCalcCutElimEgConclusionTo{n-1}}
      }
      {\axruleP n}
      {\quadraticSeqCalcCutElimEgConclusion}
    \hspace{-8ex}
  \end{array}
}
\newcommand\quadraticSeqCalcCutElimEg{\begin{center}\begin{math}
  \hspace{-15ex}
  \begin{array}{c}
  \cutrule{
    \cutrule{
        {\unaryrule{\Pi_n}{\quadraticSeqCalcCutElimEgConclusionTo{n}}}
    }
    {\coaxruleP1}
    {\quadraticSeqCalcCutElimEgConclusion}
  }
  {\coaxruleP2}
  {\quadraticSeqCalcCutElimEgConclusion}
  \hspace{5ex}
  \\[.5ex]
  \rput(.5,0){\psline[linewidth=1.5pt,linestyle=dotted]{-}(-.1,.2)(.1,-.2)}
  \\[1ex]
  \hspace{25ex}
  \cutrule{
    {\quadraticSeqCalcCutElimEgConclusion\hspace{6ex}}
  }
  {\coaxruleP n}
  {\quadraticSeqCalcCutElimEgConclusion}
  \end{array}\hspace{-6ex}\end{math}\end{center}}
\newcommand\biguneteg{
\rule{0ex}{4ex}
\forall \bone\,\exists \btwo\,
\Rnode{pp}{\pp}(g\bone,f\btwo,a)
\gap8
\bigleft\exists\bthree\,\forall\bfour\,
\Rnode{p}{P}(\bthree,f\bfour,a)\bigright
\mm3\tensor\mm4
\Rnode{q}{Q}(h(\bsix,a))
\gap8
\exists\bfive\, \Rnode{qq}{\qq}(\bfive)
\psset{nodesepA=2pt,nodesepB=2pt}
\ncbar[angle=90,arm=7pt,linecolor=\BigEgLeftColor]{pp}{p}
\ncbar[angle=90,arm=7pt,linecolor=\BigEgRightColor]{qq}{q}
}
\newcommand\idot\circ%
\newcommand\expsmallleft{\mm1(\cdots((c\idot x_0)\idot x_1)\cdots\m2)\idot x_n}
\newcommand\expsmallright{\mm1x_n\idot(x_{n-1}\idot(\cdots(x_0\idot c)\cdots\m2))}
\newcommand\expsmallegseq{\ex x_0 \ex x_1 \cdots \ex x_n\mm2
\bigleft
\Rnode{P}{P}(\expsmallleft)
\,\parr\,
\Rnode{pp}{\pp}(\expsmallright)
\bigright
}
\newcommand\expsmalleg{
\rule{0ex}{3ex}
\expsmallegseq
\psset{linecolor=darkgreen}
\link{P}{pp}
}
\newcommand\subbox[2]{%
\begin{center}\begin{math}#2\end{math}

\v5#1\v3\end{center}}
\newcommand\girhza{h(\optionalUnderline z,\m2 a)}
\newcommand\optionalUnderline[1]{#1}%
\newcommand\biggirardnet[3]{%
\renewcommand\ttwo{(\girhza)}
\raiseproof{
  \forallrule
  {
    \existslink{#1}
    {\openhyp{\lnode{pp}{\pp \tone}}}
    {\ex \btwo \pp \tonep}
  }
  {\all \bone \ex \btwo \pp \tonep}
}
\h5
      \tensorrule
      {
                \existslink{#2}
                {
                     \forallrule
                     {
                        \openhyp{\lnode{p}{P \tone}}
                     }
                     {\all \bfour P \tonepp}
                }
                {\ex \bthree \all \bfour P \tonexx}
      }
      {
         \openhyp{\lnode{q}{Q \ttwo}}
      }
      {\bigleft \ex \bthree \all \bfour P \tonexx \bigright \tensor Q \ttwo}
\h5
  \existslink{#3}
  {\openhyp{\lnode{qq}{\qq \ttwo}}}
  {\ex \bfive \qq \ttwop}
\psset{linecolor=\BigEgLeftColor}
\girlink{pp}{p}
\psset{linecolor=\BigEgRightColor}
\girlink{q}{qq}
}
\newcommand\bigoldgirardnet{\biggirardnet{\phantom{\dot y}}{\phantom{gy}}{\phantom{\girhza}}}
\newcommand\figcomparison{\begin{figure*}
\subbox{An \fomll proof}{\bigproofeg}
\v5
\subbox{Its Girard net}{\bigoldgirardnet}
\v5
\subbox{Its unification net}{\biguneteg}
\v4
\caption{An \fomll proof, its Girard net, and its unification net.\label{fig:comparison}\label{fig:translation-eg}}
\end{figure*}}
\newcommand\elim[2]{
\begin{array}{c}
\rnode{a}{} \\[#1ex]
\rnode{b}{}
\ncline[arrows=->]a b
\naput{#2}
\end{array}}
\newcommand{\lastconc}{\ex z\alongbaseline{$\big(\strut
    \Rnode p Pz\mm1\tensor(\m2\Rnode Q\qq z\parr\Rnode q Qz\m1)\m2\big)$}}
\newcommand\subouty{{\color{blue} y}}
\newcommand\subinfx{{\color{red}f x}}
\newcommand\girnonlocaleg{%
\begin{center}\psscalebox{.9}{\begin{math}
\begin{array}{c}%
{\raiseproof
  {
   \forallrule  {\pfseq{\lnode x{\pp fx}}}
                     {\all x\mm1\pp fx}
  }
  {}
}
\h6
\binaryrule
  {
   \existsrule%
                           {\pfseq{\lnode y {Pfx}}}
                           {\ex y P y}
  }
  {
   \forallrule     {\pfseq{\lnode Y {\pp\subouty}}}
                        {\all\subouty\pp\subouty}
  }
  {}
\h6
\existsrule%
  {
   \tensorrule
     {\pfseq{\lnode z{P\subouty}}}
     {\parrule
       {\pfseq{\lnode Q{\qq\subouty}\phantom{{}\parr{}}\lnode{QQ}{Q\subouty}}}
       {\qq\subouty\parr Q\subouty}
     }
     {P\subouty\tensor (\qq\subouty\parr Q\subouty)}
  }
  { \ex z \left(Pz \tensor (\qq z \parr Q z)\right) }
\thingirlink{x}{y}
\thingirlink{Q}{QQ}
\thingirlink{Y}{z}
\\[-1ex]
\elim{4}{\put(-92,6){global substitution \;\;\;$\subouty\mapsto\subinfx$}}
\h{10}
\\[3ex]
\h{.5}
{\raiseproof
  {
   \forallrule    {\pfseq{\lnode x{\pp fx}}}
                       {\all x\mm1\pp fx}
  }
  {}
}
\h7
{\raiseproof
  {
   \binaryrule
     {\pfseq{\lnode y {Pfx}\;\;}}
     {\pfseq{\;\;\;\lnode Y {\pp \subinfx}}}
     {}
  }
  {}
}
\h7
\existsrule%
  {
   \tensorrule
     {\pfseq{\lnode z{P \subinfx}}}
     {\parrule
       {\pfseq{\lnode Q{\qq \subinfx}\phantom{{}\parr{}}\lnode{QQ}{Q \subinfx}}}
       {\qq \subinfx\parr Q \subinfx}
     }
     {P \subinfx\tensor (\qq \subinfx\parr Q \subinfx)}
  }
  { \ex z \left(Pz \tensor (\qq z \parr Q z)\right) }
\thingirlink{x}{y}
\thingirlink{Q}{QQ}
\thingirlink{Y}{z}
\\[-3ex]
\elim{3}{}
\h{10}
\\[4ex]
\renewcommand\subinfx{fx}%
\h{.5}
{\raiseproof
  {
   \forallrule    {\pfseq{\lnode x{\pp fx}}}
                       {\all x\mm1\pp fx}
  }
  {}
}
\h7
\phantom{
{\raiseproof
  {
   \binaryrule
     {\pfseq{\lnode y {Pfx}\;\;}}
     {\pfseq{\;\;\;\lnode Y {\pp \subinfx}}}
     {}
  }
  {}
}
}%
\h7
\existsrule%
  {
   \tensorrule
     {\pfseq{\lnode z{P \subinfx}}}
     {\parrule
       {\pfseq{\lnode Q{\qq \subinfx}\phantom{{}\parr{}}\lnode{QQ}{Q \subinfx}}}
       {\qq \subinfx\parr Q \subinfx}
     }
     {P \subinfx\tensor (\qq \subinfx\parr Q \subinfx)}
  }
  { \ex z \left(Pz \tensor (\qq z \parr Q z)\right) }
\thingirlink{x}{z}
\thingirlink{Q}{QQ}
\end{array}
\end{math}}\end{center}}
\newcommand{\figcutelimcomp}{\begin{figure*}\begin{center}%
\v2{Girard's cut elimination is not local:}\v3
\girnonlocaleg
\v{7}{Unification net cut elimination is local:}\v{1}
\unetslocalegDisplayedSyntacticSequents
\v2
\caption{Cut elimination comparison.%
\label{fig:cutelimcomp}\label{fig:cut-elim-comparison}}\end{center}\end{figure*}}
\newcommand{\unetslocalegDisplayedSyntacticSequents}{\begin{center}\v4\begin{math}\begin{array}{c}
\all x\Rnode x{\pp}fx
\h5
{
\ex y}\Rnode y P y
\h5
{
\all y}\Rnode Y{\pp}y
\h5
\lastconc
\link x y
\link Y p
\link Q q
\cutlink y Y
\\[1ex]
\elim{2.5}{}%
\h{9.5}
\\[5ex]
\all x\Rnode x{\pp}fx
\h5
\phantom{\ex y{}}\Rnode y P y
\h5
\phantom{\all y{}}\Rnode Y{\pp}y
\h5
\lastconc
\link x y
\link Y p
\link Q q
\cutlink y Y
\\[1ex]
\elim{2.5}{}
\h{9.5}
\\[5ex]
\all x\Rnode x{\pp}fx
\h5
\phantom{\ex y{}\Rnode y P y
\h5
\all y{}\Rnode Y{\pp}y}
\h5
\lastconc
\link x p
\link Q q
\cutlink y Y
\\[1ex]
\end{array}
\end{math}\end{center}}%
\newcommand\uniformityseqvar[1]{\ex x \pp x\,\com\,\ex #1 P #1}
\newcommand\uniformityseqtwosidedvar[1]{\all x P x\mm4\vdash\mm4\ex #1 P#1}
\newcommand\uniformityseqtwosidedinlinevar[1]{\all x P x\vdash\ex #1 P#1}
\newcommand\uniformityseqtwosidedinline{\uniformityseqtwosidedinlinevar x}
\newcommand\additionalproved{\pp\com\,\ex x P}
\newcommand\splitredundancyproof[3]{%
\existsrule
  {\existsruleleft
    {\axiomrule{\ovdash \pp #1\,\com \, P #2}}
    {\ovdash \ex x \pp x\,\com \, P#2}
  }
{\uniformityseqvar{#3}}
}
\newcommand\othersplitredundancyproof[3]{%
\existsruleleft
  {\existsrule
    {\axiomrule{\ovdash \pp #1\,\com \, P #2}}
    {\ovdash \pp #1\,\com \, \ex #3 P #3}
  }
{\uniformityseqvar{#3}}
}
\newcommand\redundancyproof[2]{\splitredundancyproof{#1}{#1}{#2}}
\newcommand\otherredundancyproof[2]{\othersplitredundancyproof{#1}{#1}{#2}}
\newcommand\additionalredundancyproof{
\existsrule
{\axiomrule{\pp\com \, P}}
{\additionalproved}
}
\newcommand\redundancygirardnet[2]{%
\existsruleleft%
  {
    {\pfseq{\lnode x {\pp #1}}}
  }
  {\ex x \pp x}
\h4
\existsrule%
  {
    {\pfseq{\lnode y {P #1}}}
  }
  {\ex #2 P #2}
\thingirlink x y}
\newcommand\additionalredundancygirardnet[1]{%
\raiseproof
   {\pfseq{\lnode y \pp}}
\h4
\existslink{#1}
  {
    {\pfseq{\lnode x P}}
  }
  {\ex x P}
\thingirlink y x}
\newcommand\nonredundantunificationnet[1]{%
\pfseq{\ex x\Leaf* P x    \h4    \ex{#1} \Leaf P {#1}}
\axlink P {Pdual} 4}
\newcommand\additionalredundancyunet{%
\pfseq{\Leaf* P \h4 \ex x \Leaf P}
\axlink P {Pdual} 4}
\newcommand{\figadditionalredundancy}{\begin{figure*}%
\v3\begin{center}\begin{math}
\begin{array}{c@{\h9}c@{\h9}c}
\additionalredundancyproof
&
\additionalredundancygirardnet{t}
&
\additionalredundancyunet
\\[3ex]
\mbox{\begin{tabular}{@{}c@{}}
The unique cut-free\\proof of\\$\additionalproved$
\end{tabular}}
&
\mbox{\begin{tabular}{@{}c@{}}
Infinitely many cut-free\\
\cite{Gir96} nets $G_t^{96}$ of\\
$\additionalproved$\end{tabular}}
&
\mbox{\begin{tabular}{@{}c@{}}
The unique cut-free
\\
unification net
of\\
$\additionalproved$
\end{tabular}}
\\[2ex]
\end{array}\end{math}\end{center}%
\caption{Additional redundancy in the \cite{Gir96} variant of Girard \fomll nets.\label{fig:girard-additional-redundancy}}\hrulefill\end{figure*}}
\newcommand{\figredundancy}{
\begin{figure*}
\begin{center}\begin{math}
\def\onseq{\uniformityseqtwosidedvar x}
\begin{array}{c@{\h9}c@{\h9}c}\\[-2ex]
\redundancyproof t x
&
\redundancygirardnet t x
&
\nonredundantunificationnet x \\[1.5ex]
\mbox{\begin{tabular}{@{}c@{}}
Infinitely many (one-sided)\\cut-free proofs $\Pi_t$ of\\$\onseq$\\(one per witness $t$)
\end{tabular}}
&
\mbox{\begin{tabular}{@{}c@{}}Infinitely many cut-free\\Girard nets $G_t$
\onorof\\$\onseq$\\(one per witness $t$)\end{tabular}}
&
\mbox{\begin{tabular}{@{}c@{}}The unique cut-free
\\
unification net
\onorof\\
$\onseq$\\{}%
\end{tabular}}
\\[-1.5ex]
\end{array}\end{math}\end{center}
\caption{Illustrating unification net canonicity.\label{fig:redundancy}\label{fig:canonicity}}\hrulefill\end{figure*}}
\newcommand\leftall[2]{\forall #1\mm2 \Rnode{#2}{\pp} #1}
\newcommand\rightex[2]{\exists #1\mm2 \Rnode{#2}{P} #1}
\newcommand\extensorbit[3]{\exists {#1}(\Rnode{#2}{P}#1\tensor\Rnode{#3}{\pp} f{#1}{#1})}
\newcommand\chunk[4]{\leftall{#1}{#3}\h3
\extensorbit{#1}{left}{right}
\h3 \rightex{#2}{#4}\link{#3}{left}\link{right}{#4}}
\newcommand{\figexp}{\begin{sidewaysfigure*}\begin{center}%
\scalebox{.9}{%
\begin{math}\begin{array}{@{}c@{}}
\\[0ex]
\textbf{\,Cut elimination of Girard nets blows up exponentially:}
\\[7ex]
\GexponentialChunk{left1}{right1}{\pp}{P}{\pp}{P}{f}{x}{\gx}
\h2
\GexponentialChunk{left2}{right2}{\pp}{P}{\pp}{P}{f}{\gx}{\gxx}
\chunkcut{right1}{left2}
\h2
\GexponentialChunk{left3}{right3}{\pp}{P}{\pp}{P}{f}{\gxx}{\gxx}
\chunkcut{right2}{left3}
\h2
\GexponentialChunk{left4}{right4}{\pp}{P}{\pp}{P}{f}{\gxxx}{\gxxxx}
\chunkcut{right3}{left4}
\\[5ex]
\elim{8}{\raisebox{2pt}{\begin{math}\begin{array}{@{}l@{}}
\text{Non-local cut reductions}\\
\text{substituting }\gx\mm2\shortmapsto fxx\text{, }\gxx\mm2\shortmapsto f\gx\gx\text{, }\gxxx\mm2\shortmapsto f\gxx\gxx
\end{array}\end{math}}}
\\[8ex]
\begin{array}[t]{@{\;}c@{\;\;}c@{\;\;}c@{\;\;}c@{\;\;}c@{\;\;}c@{\;}}
  \hspace{-5ex}
  \aligntop{\GnamedForallBit{forallLit1}{forall1}{\pp}{x}}
  &
  \aligntop{\GnamedTensorBit{tensorLeftLit1}{tensorRightLit1}{P}{\pp}{f}{x}{x}}
  \thingirlink{forallLit1}{tensorLeftLit1}
  &
  \aligntop{\GnamedTensorBit{tensorLeftLit2}{tensorRightLit2}{P}{\pp}{f}{\gx}{fxx}}
  \thingirlink{tensorRightLit1}{tensorLeftLit2}
  &
  \aligntop{\GnamedTensorBit{tensorLeftLit3}{tensorRightLit3}{P}{\pp}{f}{\gxx}{ffxxfxx}}
  \thingirlink{tensorRightLit2}{tensorLeftLit3}
  &
  \aligntop{\GnamedTensorBit{tensorLeftLit4}{tensorRightLit4}{P}{\pp}{f}{\gxxx}{f ffxxfxx ffxxfxx}}
  \thingirlink{tensorRightLit3}{tensorLeftLit4}
  &
  \aligntop{\GnamedExistsBit{existsTop5}{existsBot5}{P}{f}{f ffxxfxx ffxxfxx}{\gxxxx}}
  \thingirlink{tensorRightLit4}{existsTop5}
\end{array}
\\[21ex]
\textbf{\,Cut elimination of unification nets is linear time:}
\\[8ex]
\chunk{x}{\gx}{a}{b}
\h3
\chunk{\gx}{\gxx}{c}{d}\cutlink{b}{c}
\h3
\chunk{\gxx}{\gxxx}{e}{f}\cutlink{d}{e}
\h3
\chunk{\gxxx}{\gxxxx}{g}{h}\cutlink{f}{g}
\\[2ex]
\elim{6}{\raisebox{2pt}{Local cut reductions}}%
\\[9ex]
\leftall{x}a
\h3
\extensorbit{x}bc\link a b
\h3
\extensorbit{\gx}de\link c d
\h3
\extensorbit{\gxx}fg\link e f
\h3
\extensorbit{\gxxx}hi\link g h
\h3
\rightex{\gxxxx}j\link i j
\\[5ex]
\end{array}\end{math}\hspace{-8ex}}%
\end{center}%
\caption{Cut elimination complexity comparison.\label{fig:cut-elim-complexity-comparison}}\end{sidewaysfigure*}}
\newcommand{\cutwithwidth}[3]{\Rnode{x}{#1}\hspace{#3}\Rnode{y}{#2}\ncbar[arm=2pt,nodesep=1pt,angle=-90]xy}
\newcommand{\cut}[2]{\cutwithwidth{#1}{#2}{.5ex}}
\newcommand{\cutwith}[5]{\Rnode{x}{#1}#2\;\Rnode{y}{#3}\ncbar[arm=#4pt,nodesep=#5pt,angle=-90]xy}
\newcommand{\unfold}[1]{\widehat{#1}}
\newcommand{\below}[3]{\ncbar[arm=#3,angle=-90,linestyle=none]{#2}{#2}\ncput{#1}}
\newcommand{\putright}[3]{\ncbar[arm=#3,angle=0,linestyle=none]{#2}{#2}\ncput{#1}}
\newcommand{\swc}{\raisebox{.3pt}{$\mcal C\m2'$}}
\newcommand{\swcc}{\raisebox{.3pt}{$\mcal C$}}
\newcommand\depsym{\toedge}
\newcommand\dep[2]{\,\ex{#1}\depsym\all{#2}\,}
\newcommand\tenex[1]{#1^{\tensor}_{\exists}}
\newcommand\encode[1]{\tenex{#1}}
\def\myoverline#1{{\ThisStyle{{%
  \setbox0=\hbox{{$\SavedStyle#1$}}%
  \stackengine{{1\LMpt}}{{$\SavedStyle#1$}}{{\rule{\wd0}{.4\LMpt}}}{O}{c}{F}{F}{S}%
}}}}
\renewcommand{\dual}[1]{{\mkern1mu\myoverline{\mkern-1mu#1\mkern-1.5mu}\mkern1.5mu}}%
\newcommand\pp{{\dual P}}
\newcommand{\qq}{\mkern1mu\dual{\mkern-1mu Q\mkern-1mu}\mkern1mu}
\newcommand{\rr}{\dual R}
\renewcommand{\AA}{\mkern1mu\dual{\mkern-1muA\mkern-1mu}\mkern1mu}
\newcommand{\BB}{\dual{B\mkern-.5mu}\mkern.5mu}
\newcommand{\demorgan}[2]{\mkern1mu\dual{\mkern-1muA\ifthenelse{\equal{#1}{\parr}}{\mkern-2mu}{}#1 B\mkern-1mu}\mkern1mu=\BB#2\ifthenelse{\equal{#2}{\parr}}{\mkern-1mu}{} \AA}
\newcommand\linknodesep{2pt}
\DeclareDocumentCommand\axlink{s m m m o}%
  {\psset{arrows=-,nodesep=\linknodesep,linewidth=\linkthickness,arm=#4pt,angleA=\IfBooleanT{#1}{-}90}\IfNoValueTF{#5}{\ncbar}{\ncbar[#5]}{#2}{#3}}
\DeclareDocumentCommand{\caxlink}{m m o}%
  {\IfNoValueTF{#3}{\axlink{#1}{#2}{11}}{\axlink{#1}{#2}{#3}}[linearc=0.2,nodesep=3pt]}
\DeclareDocumentCommand{\weightpos}{O{5} m m O{WEIGHT}}%
  {\def\@WEIGHTNODE{#4}#3[framesep=0pt,npos=1.#1]%
  {\scalebox{0.85}{\rnode{\@WEIGHTNODE}{$\mkern-2mu{#2}\mkern-3mu$}}}}%
\DeclareDocumentCommand\weight{O{5} m O{WEIGHT}}{\weightpos[#1]{#2}{\ncput*}[#3]}%
\newcommand{\fullweight}{\ncput{\rnode{WEIGHT}{}}\def\@WEIGHTNODE{WEIGHT}}
\DeclareDocumentCommand{\legto}{o m m}%
  {\pnode(!\psGetNodeCenter{#3}\psGetNodeCenter{#2} #2.x #3.y){JOIN}\IfNoValueTF{#1}{\ncline[nodesepA=\linknodesep,nodesepB=0.03pt]}{\ncline[nodesepA=\linknodesep,nodesepB=0.03pt,#1]}{#2}{JOIN}}
\DeclareDocumentCommand{\leg}{o m}{\IfNoValueTF{#1}{\legto{#2}{\@WEIGHTNODE}}{\legto[#1]{#2}{\@WEIGHTNODE}}}
\DeclareDocumentCommand{\Nnode}{o m o}{\Rnode{#2\IfNoValueF{#1}{#1}}{\IfNoValueTF{#3}{#2}{#3}}}%
\DeclareDocumentCommand{\Leaf}{s o m}{\IfBooleanTF{#1}{\Rnode{#3dual\IfNoValueF{#2}{#2}}{\nohang{\dual #3}}}{\Rnode{#3\IfNoValueF{#2}{#2}}{\nohang{#3}}}}%
\DeclareDocumentCommand{\Tensor}{o}{\Nnode[#1]{Tensor}[\mkern1mu\tensor\mkern1mu]}
\DeclareDocumentCommand{\Par}{o}{\Nnode[#1]{Par}[\mkern1mu\parr\mkern1mu]}
\newcommand\ovdash{}%
\newcommand\comsymbol{\MVComma}
\newcommand\com{\mbox{\kern-1.5pt\comsymbol\kern-1.1pt}}
\newcommand\plinkcolour{red}%
\newcommand\qlinkcolour{blue}%
\newcommand\rlinkcolour{Green}%
\newcommand\drinkerarrow{\ex x(P x\!\implies\! \all y Py)}
\newcommand\sig[1]{\unifier{\gets{x_{#1}}{t_{#1}}}}
\newcommand\twoseqs{\begin{center}\v1\begin{math}
\cutrule
  {\axiomrule{P fx\com \pp fx}}
  {
   \existsrule
     {\axiomrule{P fx\com \pp fx}}
     {P fx\com \ex z P z}
  }
  {P fx\com \;\cutwith{\pp}{fx\;}{P}{5}{2}fx\com \;\ex z \pp z}
\h{20}
\cutrule
  {\axiomrule{P fx\com \pp fx}}
  {
   \existsrule
     {\axiomrule{P fx\com \pp fx}}
     {P fx\com \ex z P z}
  }
  {P fx\com \;\cutwith{\pp}{y\;}{P}{5}{2}y\com \;\ex z \pp z}
\end{math}\v3\end{center}}
\newcommand{\twoencodedseqs}{\begin{center}\v1\begin{math}
\existsrule{
 \tensorrule
  {\axiomrule{P fx\com \pp fx}}
  {
   \existsrule
     {\axiomrule{P fx\com \pp fx}}
     {P fx\com \ex z P z}
  }
  {P fx\com \;\pp fx\tensor Pfx\com \;\ex z \pp z}
}
{Pfx\com\;\ex x \pp fx\tensor Pfx\com\;\ex z \pp z}
\h{20}
\existsrule{
 \tensorrule
  {\axiomrule{P fx\com \pp fx}}
  {
   \existsrule
     {\axiomrule{P fx\com \pp fx}}
     {P fx\com \ex z P z}
  }
  {P fx\com \;\pp fx\tensor P fx\com \;\ex z \pp z}
}
{Pfx\com\;\ex y \pp y\tensor Py\com\;\ex z \pp z}
\end{math}\v1\end{center}}
\colorlet{dblue}{black!15!blue}
\colorlet{dred}{black!15!red}
\colorlet{dgreen}{black!45!green}
\colorlet{dorange}{black!60!orange}
\colorlet{dpurple}{blue!45!red}
\colorlet{dmix}{dblue!45!dgreen!85!}
\colorlet{dveryshared}{dpurple!50!dred}
\colorlet{dbrown}{black!35!brown}
\newcommand\identityEg[3]{%
\psset{nodesepA=#1pt,nodesepB=#2pt}
(\ex x\Rnode x{\pp} x)
\parr\mm2
(\all y\m1\Rnode y P y)
\ncbar[linecolor=dblue,angle=90,arm=#3pt]{x}{y}}%
\newcommand\identityEgForMultiplicativeEncoding[3]{
\psset{nodesepA=#1pt,nodesepB=#2pt}
\,({\color{blue}\ex x}\Rnode x{\pp} x)
\,\parr\,
({\color{red}\all y}\m1\Rnode y P y)
\ncbar[angle=90,arm=#3pt]{x}{y}}%
\newcommand\identityMultiplicativeEncoding{
({\color{blue}\Rnode 1 Q\tensor} \Rnode 2 \pp)\,\parr\,({\color{red}\Rnode 3 \qq\parr }\Rnode 4 P)
\ncbar[linecolor=dpurple,angle=90,arm=5pt] 1 3
\ncbar[angle=90,arm=10pt] 2 4
}
\newcommand\prenexvdashinline[2]{A\mm2 #1 \mm2 \ex x B\,\vdash\,\ex #2 (A\mm2 #1\mm2 B)}
\newcommand\goodprenexvdashinline[1]{\prenexvdashinline{\tensor}{#1}}
\newcommand\badprenexvdashinline[1]{\prenexvdashinline{\parr}{#1}}
\newcommand\prenexinline[3]{\Rnode{pp}\pp\mm2 #1\mm2 \all x \Rnode{qq}\qq x\mm1\com\mm3\ex #3(\Rnode P P #2 \Rnode Q Q#3)}
\newcommand\goodprenexinline[1]{\prenexinline\parr\tensor{#1}}
\newcommand\prenexgraph[3]{%
\prenexgraphcore{#1}{#2}{#3}
\dirtreestyle
\ncline[nodesepA=1pt]{pp}{in}
\ncline[nodesepA=3pt]{allx}{in}
\ncline[nodesepA=3pt]{qq}{allx}
\ncline[nodesepA=2pt,nodesepB=3pt]{P}{out}
\ncline[nodesepA=1pt,nodesepB=3pt]{Q}{out}
\ncline[nodesepA=4pt]{out}{ex}
}
\newcommand\prenexgraphcore[3]{%
\def\gap{1.3}
\rule{0ex}{16.5ex}
\treexy{-\gap}{.9}{in}{#1}{
  \leaf{-.5}{pp}{\pp}
  \treexy{.5}{.9}{allx}{\all x}{
    \leaf{0}{qqx}{\Rnode{qq}{\qq}\makebox[0pt][l]{\(x\)}}
  }
}
\treexy{\gap}{.9}{ex}{\ex #3}{
  \treey{.9}{out}{#2}{
    \leaf{-.5}{P}{P}
    \leaf{.5}{Qx}{\rnode{Q}{Q}#3}
  }
}
\axlink{pp}{P}{12}[linecolor=dblue]
\axlink{qq}{Q}{22}[linecolor=dred]
}
\newcommand\goodprenexgraph[1]{\prenexgraph{\parr}{\tensor}{#1}}
\newcommand\badprenexgraph[1]{\prenexgraph{\tensor}{\parr}{#1}}
\newcommand\goodprenexleapgraph[1]{\goodprenexgraph{#1}\ncline[arrows=->]{ex}{allx}}
\newcommand\badprenexleapgraph[1]{\badprenexgraph{#1}\ncline[arrows=->]{ex}{allx}}
\newcommand\goodprenexswitching[1]{\prenexgraphcore{\parr}{\tensor}{#1}
\treestyle
\ncline{ex}{allx}
\ncline[nodesepA=1pt]{pp}{in}
\ncline[nodesepA=2pt,nodesepB=3pt]{P}{out}
\ncline[nodesepA=1pt,nodesepB=3pt]{Q}{out}
\ncline[nodesepA=4pt]{out}{ex}
}
\newcommand\badprenexswitching[1]{\prenexgraphcore{\tensor}{\parr}{#1}
\treestyle
\ncline{ex}{allx}
\ncline[nodesepA=1pt]{pp}{in}
\ncline[nodesepA=3pt]{allx}{in}
\ncline[nodesepA=2pt,nodesepB=3pt]{P}{out}
\ncline[nodesepA=4pt]{out}{ex}
}
\newcommand\prenexdisplayed[3]{\Rnode{pp}\pp \mm3 #1 \mm3 \all x \Rnode{qq}\qq x\h4\ex #3(\Rnode P P \mm2 #2 \mm2 \Rnode Q Q#3)}
\newcommand\goodprenexdisplayed[1]{\prenexdisplayed\parr\tensor}
\newcommand\badprenexdisplayed[1]{\prenexdisplayed\tensor\parr}
\newcommand\prenexlinking[3]{
\prenexdisplayed{#1}{#2}{#3}
\rule{0ex}{5ex}
\axlink{pp}{P}{8}[linecolor=dblue]
\axlink{qq}{Q}{14}[linecolor=dred]
}
\newcommand\goodprenexlinking[1]{\prenexlinking{\parr}{\tensor}{#1}}
\newcommand\badprenexlinking[1]{\prenexlinking{\tensor}{\parr}{#1}}
\newcommand\prenexlinkingPreExists{
\Rnode{pp}\pp\mm1\com\, \Rnode{qq}\qq x\mm1\com\,\Rnode P P \tensor \Rnode Q Qx
\updownpadding{5}{2}
\axlink{pp}{P}{3.5}[linecolor=dblue,nodesep=1.5pt]
\axlink{qq}{Q}{8}[linecolor=dred,nodesep=1.5pt]
}
\newcommand\prenexlinkingPreForall[1]{
\Rnode{pp}\pp\mm1\com\, \Rnode{qq}\qq x\mm1\com\mm3\ex #1(\Rnode P P \tensor \Rnode Q Q#1)
\updownpadding{5}{2}
\axlink{pp}{P}{3.5}[linecolor=dblue,nodesep=1.5pt]
\axlink{qq}{Q}{8}[linecolor=dred,nodesep=1.5pt]
}
\newcommand\prenexlinkingPrePar[1]{
\Rnode{pp}\pp\mm1\com\, \all x \Rnode{qq}\qq x\mm1\com\mm3\ex #1(\Rnode P P \tensor \Rnode Q Q#1)
\updownpadding{5}{2}
\axlink{pp}{P}{3.5}[linecolor=dblue,nodesep=1.5pt]
\axlink{qq}{Q}{8}[linecolor=dred,nodesep=1.5pt]
}
\newcommand\prenexlinkingConclusion[1]{
\goodprenexinline{#1}
\updownpadding{5}{1}
\axlink{pp}{P}{3.5}[linecolor=dblue,nodesep=1.5pt]
\axlink{qq}{Q}{8}[linecolor=dred,nodesep=1.5pt]
}
\newcommand\mainegseqinline[5]{%
\ex #1(\pp #1\tensor Q#2)\com\,\qq#2\com\,Pf#4
}
\newcommand\updownpadding[2]{\raisebox{0ex}[#1ex][#2ex]{}}
\newcommand\mainexampleunetinline[5]{%
\updownpadding 5 3
\renewcommand\gap{\hspace{4ex}}
\ex #1 ( \Rnode{pp1}{\pp} #1 \tensor \Rnode{Q1}{Q} #2 )
\gap
\Rnode{qq2}{\qq} #3 \,\Rnode{par}{\parr}\, \all{#4}\mm2 \Rnode{P2}{P}f#4
\gap
\Rnode{Q3}{Q} #3 \,\Rnode{tensor}{\tensor}\, \ex{#5}\mm2 \Rnode{pp3}{\pp}f#5
\gap
\Rnode{qq4}{\qq} #3
\gap
\Rnode{P5}{P} fz
\axlink{P2}{pp1}{12}[linecolor=dpurple]
\axlink{Q1}{qq2}{6}[linecolor=dred]
\axlink*{par}{tensor}{8}[linecolor=dbrown]
\axlink{Q3}{qq4}{6}[linecolor=dgreen]
\axlink{P5}{pp3}{12}[linecolor=dblue]
}
\newcommand\abstracteg{\,\all x\mm2 Px \vdash \ex x\mm2 Px}
\newcommand\firstwitness{a}
\newcommand\secondwitness{fc}
\newcommand\Gammacom{\Gamma\mkern-1mu\com}%
\newcommand\trackingstyle{\psset{arrows=-,nodesep=1pt,linewidth=.2pt,linecolor=dblue,arrowsize=2.5pt,arrowinset=.6}}
\newcommand\Asub{A\substituteto x t}
\newcommand\Bsub{B\substituteto y u}
\newcommand\commgap{\hspace{4ex}}
\newcommand\commraise{3.7ex}
\newcommand\comm{\commgap\raisebox{\commraise}{$\leftrightarrow$}\commgap}
\newcommand\commsup[1]{\commgap\raisebox{\commraise}{$\overset{\raisebox{4pt}{\makebox[0ex]{\small\(#1\)}}}{\leftrightarrow}$}\commgap}
\newcommand{\ttcomm}{\newcommand{\lefthyp}{A}
\newcommand{\midhyp}{\pfseq{B\com C}}
\newcommand{\righthyp}{\pfseq D}
\newcommand{\conc}{A\tensor B\com C\tensor D}
\tensorruleright
  {\tensorruleleft{\lefthyp}{\midhyp}{A\tensor B\com C}}
  {\righthyp}
  {\conc}
\comm
\tensorruleleft
   {\lefthyp}
   {\tensorruleright{\midhyp}{\righthyp}{B\com C\tensor D}}
   {\conc}
}
\newcommand\ppcomm{\renewcommand\gap{}%
\def\hyp{\pfseq{A\com B\com C\com D}}
\def\conc{\Aparr B\com C\parr D}
\parruleright
  {\parruleleft{\hyp}{\Aparr B\com C\com D}}
  {\conc}
\comm
\parruleleft
  {\parruleright{\hyp}{A\com B\com C\parr D}}
  {\conc}
}
\newcommand\eecomm{
\def\hyp{\pfseq{\Asub\com \Bsub}}
\def\conc{\ex x A\com \ex y B}
\existsruleright
  {\existsruleleft{\hyp}{\ex x A\com \Bsub}}
  {\conc}
\comm
\existsruleleft
  {\existsruleright{\hyp}{\Asub\com \ex y B}}
  {\conc}
}
\newcommand\aacomm{
\def\hyp{\pfseq{A\com B}}
\def\conc{\all x A\com \all y B}
\forallruleright
  {\forallruleleft{\hyp}{\all x A\com B}}
  {\conc}
\comm
\forallruleleft
  {\forallruleright{\hyp}{A\com \all y B}}
  {\conc}
}
\newcommand\tpcomm{
\def\lefthyp{\pfseq{A\com B\com C}}
\def\righthyp{\pfseq{D}}
\def\conc{\Aparr B\com C\tensor D}
\tensorruleright
  {\parruleleft{\lefthyp}{\Aparr B\com C}}
  {\righthyp}
  {\conc}
\comm
\parruleleft
  {\tensorruleright{\lefthyp}{\righthyp}{A\com B\com C\tensor D}}
  {\conc}
}
\newcommand\tecomm{
\def\lefthyp{\pfseq{\Asub\com B}}
\def\righthyp{\pfseq{C}}
\def\conc{\ex x A\com B\tensor C}
\tensorruleright
  {\existsruleleft{\lefthyp}{\ex x A\com B}}
  {\righthyp\;\;\:}
  {\conc}
\comm
\existsruleleft
  {\tensorruleright{\lefthyp}{\righthyp}{\Asub\com B\tensor C}}
  {\conc}
}
\newcommand\tecommcontexts{
\def\lefthyp{\pfseq{\Gammacom\,\Asub\com\, B}}
\def\righthyp{\pfseq{C\com\,\Delta}}
\def\conc{\Gammacom\,\ex x A\com\, B\tensor C\com\,\Delta}
\tensorruleright
  {\!\!\!\existsruleleft{\lefthyp}{\!\!\!\!\Gammacom\,\ex x A\com\, B\!\!\!\!}}
  {\!\!\!\righthyp}
  {\;\;\conc\;\;}
\comm
\existsruleleft
  {\tensorruleright{\lefthyp}{\righthyp}{\Gammacom\,\Asub\com\, B\tensor C\com\,\Delta}}
  {\conc}
}
\newcommand\tacomm{
\def\lefthyp{\pfseq{A\com B}}
\def\righthyp{\pfseq{C}}
\def\conc{\all x A\com B\tensor C}
\tensorruleright
  {\forallruleleft{\lefthyp}{\all x A\com B}}
  {\righthyp}
  {\conc}
\commsup{x\!\not\in\! C}
\forallruleleft
  {\tensorruleright{\lefthyp}{\righthyp}{A\com B\tensor C}}
  {\conc}
}
\newcommand\eacomm{
\def\hyp{\pfseq{\Asub\com B}}
\def\conc{\ex x A\com \all y B}
\forallruleright
  {\existsruleleft{\hyp}{\ex x A\com B}}
  {\conc}
\commsup{y\!\not\in\! t}
\existsruleleft
  {\forallruleright{\hyp}{\Asub\com \all y B}}
  {\conc}
}
\newcommand\apcomm{
\def\hyp{\pfseq{A\com B\com C}}
\def\conc{\all x A\com B\parr C}
\parruleright
  {\forallruleleft{\hyp}{\all x A\com B\com C}}
  {\conc}
\comm
\forallruleleft
  {\parruleright{\hyp}{A\com B\parr C}}
  {\conc}
}
\newcommand\epcomm{
\def\hyp{\pfseq{\Asub\com B\com C}}
\def\conc{\ex x A\com B\parr C}
\hspace{-2ex}
\parruleright
  {\existsruleleft{\hyp}{\ex x A\com B\com C}}
  {\conc}
\comm
\existsruleleft
  {\parruleright{\hyp}{\Asub\com B\parr C}}
  {\conc}
\hspace{-2ex}
}
\newcommand\commfig{\newcommand\ygap{9ex}\begin{figure*}\begin{center}\begin{math}
\setlength{\prfinterspace}{1.85ex}
\begin{array}{@{}c@{\hspace{10.5ex}}c@{}}
\ttcomm
&
\ppcomm
\\[\ygap]
\eecomm
&
\aacomm
\\[\ygap]
\tpcomm
&
\eacomm
\\[\ygap]
\tacomm
&
\apcomm
\\[\ygap]
\tecomm
&
\epcomm
\\[3ex]
\end{array}
\end{math}\end{center}\caption{\label{fig:rule-commutations}\label{fig:commutations}\fomll rule commutations.
Each is a local rewrite in a proof, left-to-right or right-to-left, with arbitrary passive side-formulas in the context;
see main text for details.
In the $\exists/\forall$ commutation $y$ may not occur in $t$
and in the $\forall/\tensor$ commutation $x$ may not occur free in $C$
(precluding malformed $\forall$-rules on the right).
The lower three commutations involving a $\tensor$ rule have a symmetric variant exchanging the left and right hypotheses (omitted to save space).}\end{figure*}}
\definecolor{shadecolour}{gray}{.8}
\newcommand\fullshade[1]{\colorbox{shadecolour}{$\m5#1\m5$}}
\newcommand\expTermZero{c\idot c}
\newcommand\expTermOne{(\expTermZero)\idot(\expTermZero)}
\newcommand\expTermTwo{(\expTermOne)\idot(\expTermOne)}
\newcommand\expTermThree{(\expTermTwo)\idot(\expTermTwo)}
\newcommand\linkspark[1]{\;\Rnode x{\,} \hspace{1.8ex} \Rnode y {\,} \ncbar[angle=90,arm=4pt,nodesep=1pt,linecolor=#1] x y\;}
\newcommand\cutspark[1]{\;\Rnode x{\,} \hspace{1.8ex} \Rnode y {\,} \ncbar[angle=-90,arm=4pt,nodesep=-4pt,linecolor=#1] x y\;}
\newcommand\blowupArgsOne{c}
\newcommand\blowupArgsTwo{\blowupArgsOne,\expTermZero}
\newcommand\blowupArgsThree{\blowupArgsTwo,\expTermOne}
\newcommand\blowupArgsFour{\blowupArgsThree,\expTermTwo}
\newcommand\blowupAxiomRule[1]{\axiomrule{\pp(#1)\com\,P(#1)}}
\newcommand\blowupAxiomRuleBig[1]{\axiomrule{\pp\bigleft#1\bigright\com\,P\bigleft#1\bigright}}
\newcommand\blowupAxiomRuleOne{\blowupAxiomRule{\blowupArgsOne}}
\newcommand\blowupAxiomRuleTwo{\blowupAxiomRule{\blowupArgsTwo}}
\newcommand\blowupAxiomRuleThree{\blowupAxiomRuleBig{\blowupArgsThree}}
\newcommand\blowupAxiomRuleFour{\blowupAxiomRuleBig{\blowupArgsFour}}
\newcommand\blowupLeftAtom{\psDefBoxNodes{leftatom}{\pp\bigleft\blowupArgsFour\bigright}}
\newcommand\blowupRightAtom{\psDefBoxNodes{rightatom}{P\bigleft\blowupArgsFour\bigright}}
\newcommand\blowupExistsFourFormula[1]
\newcommand\blowupExistsThreeFormula[1]
\newcommand\blowupExistsTwoFormula[2]
\newcommand\blowupExistsOneFormula[2]
\newcommand\shortestBlowupProofFour{\shortestBlowupProofFourVars v x y z}
\newcommand\shortestBlowupProofFourVars[4]{
\hspace{-10ex}
 \existsruleleft{
  \existsruleright{
   \existsruleleft{
    \existsruleright{\blowupAxiomRuleFour}{\blowupLeftAtom\com\, \blowupExistsFourFormula{#4}}
   }{\blowupExistsThreeFormula{#3}\com\, \blowupExistsFourFormula{#4}}
  }{\blowupExistsThreeFormula{#3}\com\, \blowupExistsTwoFormula{#2}{#4}}
 }{\blowupExistsOneFormula{#1}{#3}\com\, \blowupExistsTwoFormula{#2}{#4}}
\hspace{-10ex}
}
\newcommand\blowupGirardNetLeftVars[2]{
 \existsruleleft{
  \existsruleleft{\blowupLeftAtom}
  {\blowupExistsThreeFormula{#2}}
 }
 {\blowupExistsOneFormula{#1}{#2}}
}
\newcommand\blowupGirardNetRightVars[2]{
 \existsruleright{
  \existsruleright{\blowupRightAtom}
  {\blowupExistsFourFormula{#2}}
 }
 {\blowupExistsTwoFormula{#1}{#2}}
}
\newcommand\blowupGirardNetVars[4]{
\hspace{-10ex}
\blowupGirardNetLeftVars{#1}{#3}
\hspace{1ex}
\blowupGirardNetRightVars{#2}{#4}
\thingirlink{leftatom}{rightatom}
\hspace{-10ex}
}
\newcommand\blowupGirardNet{\blowupGirardNetVars v x y z}
\newcommand\blowupUnetVars[4]{
\ex v\, \ex y\, \Rnode{pp}{\pp}(v,v\idot v,y,y\idot y)
\hspace{6ex}
\ex x\, \ex z\, \Rnode{P}{P}(c,x,x\idot x,z)
\axlink{pp}{P}{8}
}
\newcommand\blowupUnet{\blowupUnetVars v x y z}
\newcommand\blowupFormulaLeftVars[2]{\ex{#1}\,\ex{#2}\,\pp(#1,#1\idot#1,#2,#2\idot#2)}
\newcommand\blowupFormulaRightVars[2]{\ex{#1}\,\ex{#2}\,P(\blowupArgsOne,#1,#1\idot#1,#2)}
\newcommand\blowupSequentInlineVars[4]{
\blowupFormulaLeftVars{#1}{#3}\com\,
\blowupFormulaRightVars{#2}{#4}
}
\newcommand\blowupSequentInline{\blowupSequentInlineVars v x y z}
\newcommand\fibrestyle{\psset{linestyle=dotted,linewidth=.5pt}}
\newcommand\drinkerCps{\begin{math}
\newcommand\vx[1]{\cnode*(0,0){.09}{##1}}
\newcommand\ovx[1]{\cnode(0,0){.10}{##1}}
\newcommand\svx[1]{\fnode[framesize=.19]{##1}}
\newcommand\lvx[3]{\vx{##1}\rput[t](##1){\rule{0ex}{##2}##3}}
\newcommand\rvx[2]{\vx{##1}\rput[l](##1){\;\:\,##2}}
\newcommand\x{-1}
\renewcommand\xx{0}
\newcommand\xxx{1}
\newcommand\xxxx{2}
\rput(0,2.225){%
  \rput(\x,.45){\vx q}
  \rput(\x,0){\vx r}
  \rput(\xx,.73){\psset{linecolor=dblue}\ovx b}
  \rput(\xxx,-.28){\vx c}
  \rput(\xxxx,0){\psset{linecolor=dblue}\ovx d}
}
\rput(0,0){%
  \rput(\x,0){\lvx w {2.3ex} x}
  \rput(\xx,.3){\rvx v {\raisebox{.3ex}{\(\pp x\)}}}
  \rput(\xxx,-.33){\lvx x {2.3ex} y}
  \rput(\xxxx,0){\lvx y {2.6ex} {P y}}
}
{\fibrestyle\psset{nodesep=2pt}
\ncline b v
\ncline q r
\ncline r w
\ncline c x
\ncline d y}
\psset{nodesep=0pt}
\ncline b q
\ncline r c
\ncline r d
\ncline v w
\ncline w x
\ncline w y
\h{42}
\rput(0,2){%
  \Rnode{ex1Top}{\exists}
  x
  \,
  \Rnode{ppTop}{\pp}
  x
  \hspace{3ex}
  \Rnode{ex2Top}{\exists}
  x
  \,
  \Rnode{allTop}{\forall}
  y
  \,
  \Rnode{PTop}{P}
  y
}
\rput(0,0){%
  \Rnode{exBot}{\exists}
  x
  \bigleft
  \,
  \Rnode{ppBot}{\pp}
  x
  \;\,
  \Rnode{vee}{\vee}
  \;\,
  \Rnode{allBot}{\forall}
  y
  \,
  \Rnode{PBot}{P}
  y
  \bigright
}
\psset{arrows=->,nodesep=3pt,linewidth=.3pt}
\ncline{ex1Top}{exBot}
\ncline{ppTop}{ppBot}
\ncline{ex2Top}{exBot}
\ncline{allTop}{allBot}
\ncline{PTop}{PBot}
\axlink{ppTop}{PTop}{8}[linecolor=dblue]
\end{math}}
\newcommand\translate[1]{\lfloor #1\rfloor}
\newcommand\linkingon[2]{#1\mm8\rhd\mm8 #2}
\newcommand\tightlinkingon[2]{#1\mm3\rhd\mm3 #2}
\newcommand\figtranslation{\begin{figure}\begin{center}\vspace{4ex}\v{2.5}\(\setlength{\arraycolsep}{3ex}\hh{17}\begin{array}{ccc}
\axiomruleright{\linkingon{\{\mm2\{\pp,\m1 P\mm2\}\}}{\pp\com P}}
\hspace{-1.6ex}
&
\parrule
  {\linkingon{\theta}{\pfseq{\Gamma\com \, A\com \, B}}}
  {\linkingon{\theta}{\Gamma\com \, A\parr B}}
&
\existsrule
  {\linkingon{\theta}{\pfseq{\Gamma\com \, \Asub}}}
  {\linkingon{\theta}{\Gamma\com \, \ex x A}}
\\[6ex]
&
\tensorrule
  {\linkingon{\theta}{\pfseq{\Gamma\com \, A}}}
  {\linkingon{\phi}{\pfseq{B\com \, \Delta}}}
  {\linkingon{\theta\,\cup\,\phi}{\Gamma\com \, A\tensor B\com \,\Delta}}
&
\forallrule
  {\linkingon{\theta}{\pfseq{\Gamma\com \, A}}}
  {\linkingon{\theta}{\Gamma\com \, \all x A}}
\rput[lb](.3,.3){\text{($x$ not free in $\Gamma$)}}
\end{array}\)\v{2.5}\end{center}\caption{\label{fig:translation}Inductive translation of a cut-free \fomll proof $\Pi$ of $\Gamma$ to a linking $\protect\translate\Pi$ on $\Gamma$. We make two simplifying assumptions (without loss of generality): in the $\tensor$ case $\theta$ and $\phi$ are disjoint, and in the $\exists$ case the leaf vertices of $\ex x A$ and $\protect\Asub$ are the same (only their predicate labels vary, where $x$ becomes $t$).}\hrulefill\end{figure}}
\newcommand{\acut}{\cut{A}{\AA}}
\newcommand\id{\iota}
  \def\fomll{MLL1}%
\newcommand\figprenextranslation{\begin{figure*}\begin{center}\begin{math}
\prftree[r]{\(\parr\)}{
 \prftree[r]{\(\forall\)}{
  \prftree[r]{\(\exists\)}{
   \prftree[r]{\(\tensor\)}
   {
    \prftree{\updownpadding{4}{2}
     \;\Rnode{pp}\pp\com\,\Rnode P P\; \axlink{pp}{P}{3.5}[linecolor=dblue,nodesep=1.5pt]
    }
   }
   {
    \prftree{\updownpadding{4}{2}
     \;\Rnode{qq}\qq x\com\,\Rnode Q Qx\; \axlink{qq}{Q}{3.5}[linecolor=dred,nodesep=1.5pt]
    }
   }
   {
    \prenexlinkingPreExists
   }
  }
  {
   \;\prenexlinkingPreForall y\;
  }
 }
 {
  \;\prenexlinkingPrePar y\;
 }
}
{
\;\prenexlinkingConclusion y\;
}
\hspace{20ex}\rule{0ex}{45ex}
\begin{array}{@{}c@{\hspace{8ex}}c@{}}
\\[-33ex]
\goodprenexlinking y
&
\raisebox{1.5ex}{\column{Unifier (mgu)\\\(\unifier{\gets y x}\)}}
\\[8ex]
\goodprenexleapgraph y
&
\raisebox{8ex}{\column{Graph\\(leap \(\ex y\toedge \all x\))}}
\\[8ex]
\goodprenexswitching y
&
\raisebox{8ex}{\text{A switching}}
\end{array}
\end{math}\end{center}%
\caption{\label{fig:prenex-translation}Illustrating unification net translation (left) and correctness (right). The left side shows the translation of an \fomll proof into a unification net $\theta$. The right side shows the three steps in verifying correctness for $\theta$: constructring the mgu $\sigma=\protect\unifier{\protect\gets y x}$; the graph (with a leap from $\ex y$ to $\all x$ since $\sigma(y)$ contains $x$); one of four switchings, each required to be a tree. See main text for details.}\hrulefill\end{figure*}}
\newcommand\figlinkingunifier{\begin{figure}\begin{center}\v1\begin{math}
\biguneteg
\end{math}\end{center}\caption{\label{fig:linking-unifier}\label{fig:biguneteg}A linking with mgu $\protect\ueg$, hence precedences $\protect\dep v x$ and $\protect\dep w u$.}\hrulefill\end{figure}}
\newcommand\twolinkingsvar[1]{%
\begin{center}\v2\begin{math}
\renewcommand{\gap}{\hspace{4ex}}\psset{labelsep=4ex}
\newcommand{\arm}{5}
\Rnode{p}{P} fx\,\gap\cutwith{\Rnode{pp}{\pp}}{f#1\gap}{\Rnode{p1}{P}}{\arm}{2}f#1\,\gap\ex z \Rnode{pp1}{\pp} z
\ncbar[angle=90,arm=\arm pt,nodesepA=1pt,nodesepB=2pt]{p}{pp}%
\ncbar[angle=90,arm=\arm pt,nodesepA=1pt,nodesepB=2pt]{p1}{pp1}
\h{16}
\Rnode{p}{P} fx\,\gap\cutwith{\Rnode{pp}{\pp}}{y\gap}{\Rnode{p1}{P}}{\arm}{2}y\,\gap\ex z \Rnode{pp1}{\pp} z
\ncbar[angle=90,arm=\arm pt,nodesepA=1pt,nodesepB=2pt]{p}{pp}%
\ncbar[angle=90,arm=\arm pt,nodesepA=1pt,nodesepB=2pt]{p1}{pp1}
\end{math}\v2\end{center}}%
\newcommand\twolinkings{\twolinkingsvar x}%
\newcommand\expunifierfootnote{For example, for an infix binary function symbol $\idot$ the unifier of the system of equations $E_n$ comprising $x_1=c$, $x_2=x_1\idot x_1$,
\ldots, $x_n=x_{n-1}\idot x_{n-1}$ grows exponentially with $n$, since $x_n$ is assigned a term containing $2^n$ copies of the constant $c$.}
\title{\vspace*{-3.5ex}\Large\textbf{%
Unification nets: canonical proof net quantifiers}
\author{\\[-2.5ex]
\large Dominic J.\ D.\ Hughes
\\[1ex]
\small Stanford University \& U.C.\ Berkeley\thanks{%
  I pursued this research as a Visiting Scholar at Stanford then
  Berkeley. I'm grateful to my hosts, Vaughan Pratt (Stanford Computer
  Science), Sol Feferman (Stanford Mathematics) and Wes Holliday
  (Berkeley Logic Group). Thanks to Marc Bagnol, Willem
  Heijltjes and Lutz Stra\ss burger for valuable feedback, and to Dale Miller for
  inviting me to present this work at the LIX Colloquium 2013.  In
memoriam Sol Feferman (1928--2016).}
\date{\vspace{-1ex}\small 21 January 2018}}}
\begin{document}

\maketitle
\protect\thispagestyle{empty}%

\begin{center}\vspace{-5ex}

\begin{minipage}{5.5in}\setlength{\parindent}{3ex}\small%
Proof nets for MLL (unit-free Multiplicative Linear Logic) are concise graphical representations of proofs which are \emph{canonical} in the sense that they abstract away syntactic redundancy such as the order of non-interacting rules.
We argue that Girard’s extension to \fomll (first-order MLL) fails to be canonical because of redundant existential witnesses,
and present canonical \fomll proof nets called \emph{unification nets}
without them.
For example, while there are infinitely many cut-free Girard nets $\abstracteg$, one per arbitrary choice of witness for $\ex x$, there is a unique cut-free unification net,
with no specified witness.

Redundant existential witnesses cause Girard's \fomll nets to suffer from severe complexity issues:
(1) cut elimination is non-local and exponential-time (and -space), and (2) some sequents require exponentially large cut-free Girard nets.
Unification nets solve both problems:
(1) cut elimination is local and linear-time, and
(2) cut-free unification nets grow linearly with the size of the sequent.
Since some unification nets are exponentially smaller than corresponding Girard nets and sequent proofs,
technical delicacy is required to ensure correctness is polynomial-time (quadratic).

These results extend beyond \fomll via a broader methodological insight:
for canonical quantifiers, the standard \emph{parallel/sequential} dichotomy of proof nets
fails; an
\emph{implicit/explicit witness}
dichotomy is also needed.
Work in progress
extends unification nets to additives
and uses them to extend \emph{combinatorial proofs} [\textsl{Proofs without syntax}, Annals of Mathematics, 2006] to classical first-order logic.
\end{minipage}\end{center}

\tableofcontents

\section{Introduction}\label{sec-intro}\vv{-.2}

Girard's
elegant
proof nets \cite{Gir87,DR89} are
concise
graphical representations of proofs
in MLL (unit-free multiplicative linear logic).
For example, the two MLL proofs
\begin{center}\vspace{1ex}\begin{math}
\newcommand\parcomma{\;\com,\;\:\mkern1mu}
\newcommand\preparencomma{\mkern1mu\;\com\,}
\hspace{-8ex}
\tensorrule
  {\tensorruleleft
     {\color{\plinkcolour}\thickaxiomrule{\color{black}\ovdash P\com \pp}}
     {\color{\qlinkcolour}\thickaxiomrule{\color{black}\ovdash Q\com \qq}}
     {\ovdash P\com \,\pp\tensor Q\com \,\qq}
  }
  {\color{\rlinkcolour}\thickaxiomrule{\color{black} \ovdash R\com \rr}}
  {\ovdash P\com \,\pp\tensor Q\com \,\qq\tensor R\com \,\rr}
\hspace{18ex}
\tensorruleleft
  {\color{\plinkcolour}\thickaxiomrule{\color{black}\ovdash P\com \pp}}
  {\tensorrule
     {\color{\qlinkcolour}\thickaxiomrule{\color{black}\ovdash Q\com \qq}}
     {\color{\rlinkcolour}\thickaxiomrule{\color{black}\ovdash R\com \rr}}
     {\ovdash Q\com \,\qq\tensor R\com \,\rr}
  }
  {\ovdash P\com \,\pp\tensor Q\com \,\qq\tensor R\com \,\rr}
\hspace{-8ex}
\end{math}\vspace{1ex}\end{center}
translate to the same MLL proof net:
\begin{center}\vspace{2ex}
  \begin{math}\newcommand\armheight{4}\renewcommand\gap{\hspace{3ex}}
    \Leaf P   \gap   \Leaf*{P}\tensor \Leaf Q   \gap   \Leaf* Q\tensor \Leaf R   \gap   \Leaf* R
    \axlink{P}{Pdual}{\armheight}[linecolor=\plinkcolour]
    \axlink{Q}{Qdual}{\armheight}[linecolor=\qlinkcolour]
    \axlink{R}{Rdual}{\armheight}[linecolor=\rlinkcolour]
  \end{math}
\end{center}
MLL proof nets are \emph{canonical} in the sense that they
abstract
away
syntactic redundancy such as the
order of
non-interacting
rules.
{\figredundancy}%
For example,
the two proofs above differ only in the order they introduce
non-interacting tensors $\pp\tensor Q$ and $\qq\tensor R$; the proof net abstracts away this arbitrary choice.
Such
syntactic redundancies are not merely subjective aesthetic failures:
as noted by Girard \cite{Gir96}, they burden sequent calculus cut
elimination with endless mechanical rule commutations.
By purging these commutations, cut elimination for MLL proof nets is local (each reduction being a local graph rewrite) and linear time (eliminating all cuts is linear time in the size of the net). In contrast, cut elimination for MLL sequent calculus is non-local and at best quadratic (Appendix~\ref{sec:mll-seq-calc-cut-elim}).

Girard
extended MLL proof nets with
quantifiers, to \fomll (first-order MLL),
over
a series of
three papers \cite{Gir87,Gir88,Gir91q}.
He reiterated
them
in
\textsl{The Blind Spot} \cite{Gir11},
choosing one
for the cover picture,
and characterizing them as ``\textsl{The only really satisfactory extension of proof-nets}''
(Chapter 11).

At first glance,
they do indeed appear satisfactory:
like the MLL nets they extend,
they
abstract
away the redundant order of non-interacting rules
using \emph{parallelism} \cite{Gir96}.
However,
we argue that they fail to be canonical (hence fail to be satisfactory)
due to redundant existential witnesses, inherited from
sequent calculus.
For example, consider
$\,\uniformityseqtwosidedinlinevar x\,$, whose one-sided form is $\,\uniformityseqvar x\:$.
Figure~\ref{fig:redundancy} (left side) shows an infinite family of cut-free \fomll proofs $\Pi_t$,
one per existential witness term $t\,=\,z$, $a$, $f(z,a)$, $f(g(z),h(a,b))$, \etc.
The choice of $t$ is arbitrary, hence redundant.
Correspondingly, there is an infinite family of cut-free Girard nets $G_t$
(Figure~\ref{fig:redundancy} centre), one per witness term $t$, since Girard nets inherit redundant existential witness directly from sequent calculus.\footnote{The \cite{Gir96} variant introduces additional redundancy not even present in sequent calculus, due to explicit witness annotations $\exists_t$ even for vacuous quantifiers (see App.\,\ref{sec:additional-redundancy}), so by \emph{Girard net} we shall always mean the \cite{Gir91q} and \cite{Gir11} variant.}

Redundant witnesses cause Girard's nets to suffer from two exponential blow-ups (see Section~\ref{sec:complexity}):
(1) cut elimination is non-local and exponential-time (and -space), and (2) some sequents require exponentially large cut-free Girard nets, \ie, cut-free Girard nets are not \emph{polynomially bounded} \cite{CR79},
a serious flaw given that \fomll possesses a polynomially bounded proof system \cite{LS94}.

\subsection{Unification nets}

We present canonical \fomll proof nets called \emph{unification nets}, or \emph{unets} for short, free of redundant existential witnesses.
Figure~\ref{fig:redundancy} (right side) illustrates canonicity: in contrast to the infinite families of cut-free sequent proofs and Girard nets, there is a unique cut-free unification net \onorof $\uniformityseqtwosidedinline$.
By leaving witnesses implicit,
unification nets solve the exponential blow-ups of Girard nets (Section~\ref{sec:complexity}):
(1) cut elimination is local and linear-time, and
(2) cut-free unification nets are only
linearly larger than their underlying sequents, hence they are polynomially bounded (indeed \emph{linearly bounded}).
A quick informal overview of unification nets is provided in Section~\ref{sec:overview} below.

\subsection{Beyond sequentialization}\label{sec:beyond-seq}\label{sec:beyond-sequentialization}

\begin{figure*}
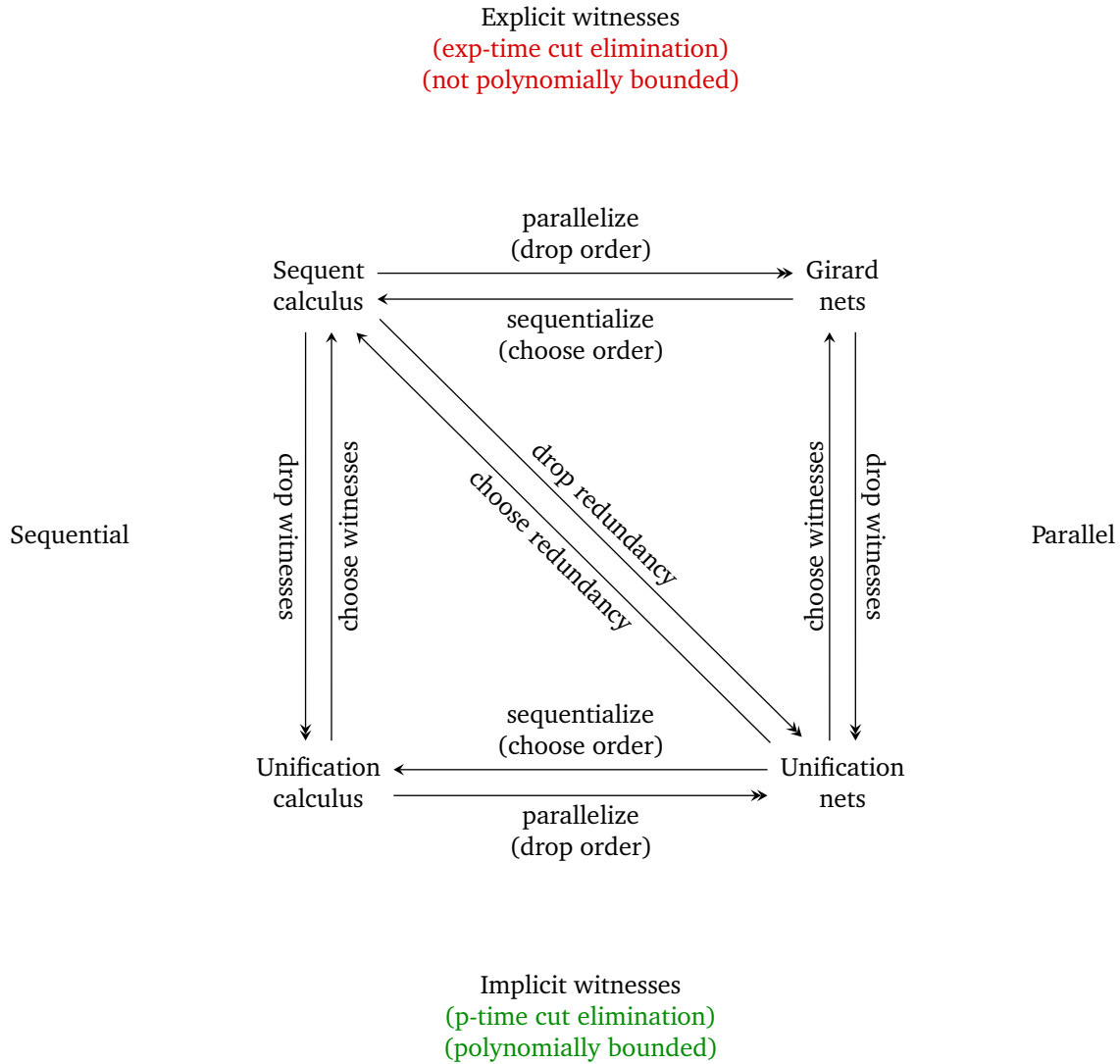

\[
\newcommand\seqlabel{\column{sequentialize\\(choose order)}}
\newcommand\witnessinglabel{\text{choose witnesses}}
\newcommand\deseqlabel{\column{parallelize\\(drop order)}}
\newcommand\dewitnesslabel{\text{drop witnesses}}
\newcommand\parallellabel{\text{Parallel}}
\newcommand\sequentiallabel{\text{Sequential}}
\newcommand\dropredundancy{\text{drop redundancy}}
\newcommand\chooseredundancy{\text{choose redundancy}}
\newcommand\explicitwitnesslabel{\column{Explicit witnesses\\\color{dred}(exp-time cut elimination)\\\color{dred}(not polynomially bounded)}}
\newcommand\implicitwitnesslabel{\column{Implicit witnesses\\\color{dgreen}(p-time cut elimination)\\\color{dgreen}(polynomially bounded)}}
\newcommand\parseqgap{14ex}
\newcommand\impexgap{14ex}
\newcommand\edgenudge{5pt}
\begin{array}{c@{\hspace{32ex}}c}
  \\[20ex]
  \Rnode{sc}{\column{Sequent\\calculus}}     & \Rnode{gn}{\column{Girard\\nets}}
  \\[36ex]
  \Rnode{uc}{\column{Unification\\calculus}} & \Rnode{un}{\column{Unification\\nets}}
  \\[28ex]
\end{array}
\psset{nodesep=5pt,arrows=->>,offset=\edgenudge,labelsep=2pt}
\ncline{sc}{gn}\taput{\deseqlabel}\taput[labelsep=\impexgap]{\explicitwitnesslabel}
\ncline{gn}{un}\aput{:U}{\dewitnesslabel}\trput[labelsep=\parseqgap]{\parallellabel}
\psset{offset=-\edgenudge}
\ncline{sc}{uc}\tlput[labelsep=\parseqgap]{\sequentiallabel}\bput{:U}{\dewitnesslabel}
\ncline{uc}{un}\tbput[labelsep=\impexgap]{\implicitwitnesslabel}\tbput{\deseqlabel}
\psset{arrows=->,offset=\edgenudge}
\ncline{gn}{sc}\tbput{\seqlabel}
\ncline{un}{gn}\aput{:U}{\witnessinglabel}
\psset{offset=-\edgenudge}
\ncline{uc}{sc}\bput{:U}{\witnessinglabel}
\ncline{un}{uc}\taput{\seqlabel}
\psset{offset=\edgenudge}
\ncline[arrows=->>,offsetB=2pt,offsetA=7pt]{sc}{un}\aput{:U}{\dropredundancy}
\ncline[arrows=->,offsetA=8pt,offsetB=3pt,nodesepA=10pt,nodesepB=3pt]{un}{sc}\aput{:D}{\chooseredundancy}
\]
\caption{Relationship between \fomll sequent calculus, Girard nets, and unification nets.
Each double-headed arrow is a surjection between cut-free systems.
The diagram commutes from top-left to bottom-right: the surjection from \fomll sequent calculus proofs to unification nets factorizes in two different ways, either through Girard nets (eliminating order redundancy first), or through unification calculus (eliminating witness redundancy first).
(The reader may look ahead to Figure~\ref{fig:comparison} for an example of translation via an intermediate Girard net.)
\\
\hspace*{3ex}%
The lower two systems are polynomially bounded, while the upper two are not, due to exponential size blow-ups (see Section~\ref{sec:complexity-issues}).
Because of redundant existential witnesses, cut elimination for the upper two systems is exponential-time (and -space); for the lower two systems it is polynomial-time
(also see Section~\ref{sec:complexity-issues})). Cut elimination for unification nets is linear time.\label{fig:beyond-sequentialization}}\end{figure*}Figure~\ref{fig:beyond-sequentialization} summarizes the relationship between \fomll sequent calculus, Girard nets, and unification nets.
The lower-left corner, \emph{unification calculus} (defined in Section~\ref{sec:unification-calculus}) is a variant of \fomll sequent calculus in which, like unification nets, existential witnesses remain implicit;
it was conceived to fill out a commuting square.

Along the east-west axis we have the standard \emph{parallel/sequential} dichotomy of proof nets \cite{Gir96}: sequent calculus and unification calculus are sequential (\emph{west}), including redundant order between non-interacting rules; Girard nets and unification nets are parallel (\emph{east}), abstracting away this redundancy.
Along the north-south axis we have an \emph{implicit/explicit witness} dichotomy: sequent calculus and Girard nets have redundant explicit existential witnesses (\emph{north}); unification calculus and unification nets abstract away this redundancy by leaving witnesses implicit (\emph{south}).

\subsection{Towards combinatorial proofs for classical first-order logic}\label{sec:fopws}

\textsl{Proof without syntax} \cite{Hug06} reformulated classical propositional logic in terms of \emph{combinatorial proofs} rather than syntactic proofs.
A key motivation for the present paper on unification nets was as a stepping stone towards extending combinatorial proofs to classical first-order logic, the subject of a paper in preparation.

A first-order combinatorial proof of
Smullyan's drinker paradox $\drinkerarrow$ is shown below-left.
\begin{center}\v{17}\hh{18}\drinkerCps\hspace*{-12ex}\vspace*{5ex}\end{center}
The lower labelled graph abstracts the proved formula $\drinkerarrow$, the upper partially-coloured graph abstracts a unification net, and the dotted lines indicate a \emph{skew fibration}, a lax notion of graph fibration.
As in the original propositional case \cite{Hug06i} (see also \cite{Car10,Str17}), a skew fibration is a simultaneous parallelization of all contraction and weakening in a proof.
By using a \emph{semi-combinatorial} presentation style \cite[\S2.1]{Hug06i}, as top-right, the unification net becomes more apparent.

\subsection{Extending unification nets to additives}\label{sec:towards-additives}%

The exponential size blow-up from explicit witnesses applies not only to \fomll, but far more generally, to quantifier-only sequent calculus (see Section~\ref{sec:complexity}). Thus the blow-up applies to first-order additives.
Current work in collaboration with Willem Heijltjes and Lutz Stra\ss burger extends unification nets to \foall (first-order Additive Linear Logic without units).
Since the examples in Figure~\ref{fig:canonicity} involve no multiplicative connective, they are simultaneously additive. Thus the unification net in Figure~\ref{fig:canonicity} is both an additive and a multiplicative unification net.

\subsection{Canonicity Theorem}\label{sec:intro:canonicity}

The cut-free \fomll proofs
\[
\redundancyproof\firstwitness{x}
\h{20}
\otherredundancyproof\secondwitness{x}
\]
are \emph{equivalent} in the sense that the left yields the right by commuting the order of the $\exists$ rules and replacing one arbitrary choice of existential witness, $\firstwitness$, by another, $\secondwitness$.
While they have distinct Girard nets (because Girard nets inherit redundant explicit exponential witnesses),
they have the same unification net (from Figure~\ref{fig:canonicity}):
\[
\nonredundantunificationnet x\rule{0ex}{5ex}
\]
In Section~\ref{sec:canonicity} we formalize this notion of proof equivalence
and prove a \textsl{Canonicity Theorem} (Theorem~\ref{thm:canonicity}, page~\pageref{thm:canonicity}): two cut-free \fomll proofs are equivalent (modulo rule commutations and re-witnessing) if and only if they have the same unification net.

\subsection{Quick informal overview of unification nets}\label{sec:overview}

An example unification net of the sequent $\mainegseqinline x y y z z$ is shown below, with four \emph{links} $\linkspark{black}$ and one \emph{cut} $\cutspark{black}$ between dual formulas:
\[ \updownpadding{6}{4} \mainexampleunetinline x y y z z \]
Unlike MLL nets and Girard's \fomll nets, linked predicates need not be strictly dual, \eg\ $\pp x$ and $Pfz$ of the left-most link above.

An \fomll proof translates to a unification net by tracking pairs of dual predicate symbols down from axiom rules. {\figprenextranslation}Figure~\ref{fig:prenex-translation} shows a simple example.\footnote{The sequent is an instance of prenex extrusion $\goodprenexvdashinline x$, which is provable in \fomll\ ($x$ not free in $A$), with the right bound variable $x$ renamed to $y$ to avoid ambiguity.}
An example involving function symbols and ternary predicates is shown in {\figcomparison}Figure~\ref{fig:comparison}, together with the corresponding Girard net for comparison.
Some unification nets are exponentially smaller than corresponding Girards nets and sequent proofs (see Sections~\ref{sec:exp-compression} and \ref{sec:complexity}).

{\figcutelimcomp}Figure~\ref{fig:cut-elim-comparison} contrasts the local cut elimination of unification nets with the non-local cut elimination of Girard nets. Cut elimination for unification nets is linear time (Theorem~\ref{thm:cut-elim-linear}, page~\pageref{thm:cut-elim-linear}), versus exponential time and space for Girard nets (Section~\ref{sec:girard-cut-elim-exponential}).

The correctness criterion for unification nets has three parts, sketched below.
\begin{itemize}
\myitem 1 \emph{Unification.}
The linking must have a \emph{unifier}: an assigment of terms to non-vacuous existential variables making every link dual. For example, $\unifier{\gets y x}$ is a unifier for the linking $\theta$ in Figure~\ref{fig:prenex-translation} (top-right),
since $Q y\unifier{\gets y x}=Q x$, dual to $\qq x$.  As another example, $\ueg$ unifies the linking in Figure~\ref{fig:comparison}.
\myitem 2 \emph{Leaps.} Construct the \emph{graph} with an edge $\ex y\toedge\all x$ called a \emph{leap}\footnote{Leaps perform a similar function to Girard's \emph{jumps} \cite{Gir96}.} whenever the most general unifier (mgu) assigns to $y$ a term containing $x$. Figure~\ref{fig:prenex-translation} shows the graph of $\theta$.
\myitem 3 \emph{Switchings.} Verify every \emph{switching} is a tree, each obtained by deleting all but one edge into every  $\parr$ and $\forall$ and undirecting remaining edges. One of the four switchings of $\theta$ is shown in Figure~\ref{fig:prenex-translation}.
\end{itemize}
See Section~\ref{sec:cut-free-unets} (cut-free unification nets) and Section~\ref{sec:cut} (unification nets with cuts).

\subsection{Technical delicacy required for polynomial-time (quadratic) correctness}

Since some unification nets are exponentially smaller than corresponding Girards nets and sequent proofs (see Sections~\ref{sec:exp-compression} and \ref{sec:complexity}), technical delicacy is required to ensure that correctness is polynomial time. In Theorem~\ref{thm:quadratic} (page~\pageref{thm:quadratic}) we show that it is at worst quadratic time.

The complexity problem is that although \emph{unifiability} can be checked in linear time \cite{MM76}, constructing an actual \emph{unifier} --- in particular, the mgu required for the leaps in the graph --- in general takes exponential time, because the unifier can be exponential in size.\footnote{\expunifierfootnote}
In Section~\ref{sec:cut-free-quadratic-correctness} we show that, with an appropriate choice of linear-time algorithm for checking unifiability, a biproduct of the algorithm provides enough information to construct all leaps in quadratic time, without the need for constructing the mgu explicitly. Thus the correctness of a unification net can be verified in quadratic time (Theorem~\ref{thm:quadratic}, page~\pageref{thm:quadratic}).\footnote{Checking that every switching of a graph is a tree takes only linear time \cite{Gue99}.}

\subsection{Related work}\label{sec:related}

Unification in the context of first-order logic goes back to Herbrand's theorem \cite{Her30}. Robinson's resolution \cite{Rob65} is a seminal work.
Our axiom links between predicates which are not strictly dual (\eg $\pp x$ and $Pfy$) are akin to the first-order connections/matings employed in automated theorem proving \cite{Bib81,And81}. In fact, Bibel in \cite[p.\,4]{Bib81} coined \emph{link} as an alternative term for a connection; we have adopted this terminology.
The roots of first-order connections/matings
with unification can be traced back further to Prawitz \cite{Pra70} and \cite{Qui55}.

Our \emph{leaps} from $\ex x$ vertices to $\all y$ vertices play a similar role to Girard's \emph{jumps} between $\all y$ vertices and occurrences of witnesses containing $y$, but in a more rarefied context without explicit witnesses. Both leaps and jumps capture dependencies between $\forall$ rules and $\exists$ rules in a proof, and the interaction between tensors and quantifiers.

Bellin and van de Wiele \cite{BW95} add a condition on eigenvariables to Girard's \fomll net definition \cite{Gir91q} to streamline kingdoms and empires. Since we leave witnesses implicit, and have no need for eigenvariables, we do not need an anologous condition.

Abstract representations of first-order quantifiers with explicit witnesses for classical logic have been
presented by Heijltjes \cite{Hei10} (extending expansion
trees \cite{Mil84}) and McKinley \cite{McK10}.
Stra\ss burger presents proof nets for second-order MLL in \cite{Str09}.

First-order proof nets with explicit witnesses are employed in linguistic analysis, for example, \cite{Moo02}. It would be interesting to see if any simplication could result from using unification nets instead.

\section{\fomll{} (first-order multiplicative linear logic, without units)}\label{sec:mll1}

As in \cite{Gir91q}, we work with \fomll (first-order multiplicative linear
logic, without units).
We adopt the following conventions:
\begin{center}\renewcommand{\arraystretch}{1.2}\begin{tabular}{c@{\;\;}c@{\;\;}cl@{\h8}cl}
$x$ & $y$ & $z$ & (term) variables
&
$P\;\;Q\;\;R$ & $n$-ary predicate symbols ($n\m3\ge\m3 0$)
\\
$f$ & $g$ & $h$ & $n$-ary function symbols ($n\m3\ge\m3 1$)
&
$A\;\;B\;\;C$ & formulas
\\
$a$ & $b$ & $c$ & constants ($0$-ary function symbols)
&
$\Gamma\;\;\mm1\Delta\mm1\;\;\Sigma$ & sequents
\\
$s$ & $t$ & $u$   & terms
\\
\end{tabular}\end{center}
Fix an arity-preserving \defn{negation} or \defn{duality} function $\dualop$ on
predicate symbols such that
\(\ddual{P}=P\) and $\pp\neq P$ for all
$P$.
A \defn{predicate} or \defn{atom} is an expression
\(Pt_1\ldots t_n\) for any $n$-ary predicate symbol $P$ and
terms $t_i$.
We may insert parentheses to increase readability,
\eg,
\(
Pffy
=
P(ffy)
=
P\left(\strut f(f(y))\right)
\) if
$f$ is a unary ($1$-ary) function symbol.
\defn{Formulas} are generated from atoms by binary connectives tensor $\tensor$ and par
$\parr$ and unary quantifiers $\forall x$ and $\exists x$ for each variable $x$.
Negation extends to formulas by
$\dual{\pi \,t_1\ldots t_n}=\dual\pi\, t_1\ldots t_n$,
$\dual{A\tensor B}\m2=\m2\AA\parr\BB$,
$\dual{\Aparr B}\m2=\m2\AA\tensor\BB$,
$\dual{\ex x A}\m2=\m2\all x{\AA}$, $\dual{\all x A}\m2=\m2\ex x{\AA}$.

\subsection{Sequents as labelled directed forests}

We identify a formula with its parse tree, a directed tree with leaves labelled by atoms and internal vertices by connectives and quantifiers.
A \defn{sequent} is a disjoint union of formulas.
We write comma for disjoint union.
For \mbox{example}, the two-formula sequent \:$\all x \pp
fx\com\,\lastconc$\, is
the labelled directed forest below.
\begin{center}
\(
\newcommand\treecore{
\hh2
\rule{0ex}{16.5ex}
\rput(-.3,1.5){\treexy{0}{.8}{allx}{\forall x}{
  \leaf{0}{Pfx}{\pp fx}
}}
\h{8}
\treexy{0}{.8}{exz}{\exists z}{
  \treexy{0}{.7}{tensor}{\tensor}{
    \leaf{-.5}{Pz}{Pz}
    \treexy{.5}{.8}{par}{\parr}{
      \leaf{-.6}{qz}{\qq z}
      \leaf{.6}{Qz}{Qz}
    }
  }
}
\psset{nodesepB=2pt}
\ncline[nodesepA=2pt]{Pfx}{allx}
\ncline{Pz}{tensor}
\ncline[nodesepA=4pt]{qz}{par}
\ncline{Qz}{par}
\ncline[nodesepA=2pt]{par}{tensor}
\ncline{tensor}{exz}}
\dirtreestyle
\treecore
\)
\vspace*{1ex}
\end{center}

\subsection{Clean sequents}

A sequent or formula is \defn{clean} if all quantifed variables are distinct from each other and from all free variables.
For example, $\,\ex xPx\com\, \all yQzy\,$ is clean but $\,\ex xPx\com\, \all xQ zx\,$ and $\,\ex xPx\com\, Qx\,$ are not.
In a clean sequent, an \defn{existential} (resp.\ \defn{universal}) variable is one bound
by an existential (resp.\ universal) quantifier.
For example, in the sequent \:$\all x\pp fx\com \ex y Qyz$\: the variables
$x$, $y$, and $z$ are universal, existential and free (respectively).
A quantifier is \defn{vacuous}\label{sec:vacuous-quantifier-formula-defn} if it binds no variable. For example,
in $\all x\ex y\all z Pzc$ both $\all x$ and $\ex y$ are vacuous, but $\all z$ is not.

\subsection{\fomll{} rules}

Sequents are proved using the following rules, where $\Asub$ denotes the result of simultaneously substituting the term $t$ for all free occurrences of $x$ in $A$.
\begin{center}\v{2.5}\(\setlength{\arraycolsep}{6ex}\hh{12}\begin{array}{ccc}
\axiomruleright{P\com \pp}\hspace{1.6ex}
&
\parrule
  {\pfseq{\Gamma\com \, A\com \, B}}
  {\Gamma\com \, A\parr B}
&
\existsrule
  {\pfseq{\Gamma\com \, \Asub}}
  {\Gamma\com \, \ex x A}
\\[5ex]
\cutrule
  {\pfseq{\Gamma\com \, A}}
  {\pfseq{\AA\com \, \Delta}}
  {\Gamma\com \,\Delta}\;\;
&
\tensorrule
  {\pfseq{\Gamma\com \, A}}
  {\pfseq{B\com \, \Delta}}
  {\Gamma\com \, A\tensor B\com \,\Delta}
&
\forallrule
  {\pfseq{\Gamma\com \, A}}
  {\Gamma\com \, \all x A}
\rput[lb](.3,.3){\text{($x$ not free in $\Gamma$)}}
\end{array}\)\v{2.5}\end{center}
These are the standard rules for first-order multiplicative linear
logic \cite{Gir87,Gir88,Gir91q}, omitting turnstile $\vdash$
(redundant in a right-sided calculus) and the exchange rule (redundant
since we treat sequents as labelled graphs).
A sequent immediately above a rule is a \defn{hypothesis} of a rule, and the sequent immediately below the rule is its \defn{conclusion}.
The conclusion of a proof is its final sequent (the conclusion of its final rule).

The sub-system without the two quantifier rules is MLL (multiplicative linear logic, without units).

\subsection{Tracking symbols, subterms and leaves through rules}\label{sec:tracking}

Every rule instance
induces a \defn{tracking} function on symbol occurrences, from above to below (a partial function in the case of a \inlinecutrulelabel rule), for example,
\[
\renewcommand\gap{\hspace{6ex}}
\trackingstyle
\newcommand\traceseq[1]{\raisebox{-.1ex}[0ex][1.1ex]{\(#1\)}}
\newcommand\partraceseq[2]{%
\traceseq{
\Rnode{P#1}P
\,\Rnode{t#1}\tensor\,\mm2
\Rnode{all#1}\forall
\Rnode{xx#1}x\mm2
\Rnode{Q#1}Q
\Rnode{f#1}f
\Rnode{x#1}x
\com\;
\Rnode{qq#1}\qq
\Rnode{a#1}a
#2
\Rnode{R#1}R
\Rnode{y#1}y
\Rnode{c#1}c
}}
\parrule{\partraceseq 1 {\com\;}}{\partraceseq 2 \parr}
\ncline{P1}{P2}
\ncline{t1}{t2}
\ncline{all1}{all2}
\ncline{xx1}{xx2}
\ncline{Q1}{Q2}
\ncline{f1}{f2}
\ncline{x1}{x2}
\ncline{qq1}{qq2}
\ncline{a1}{a2}
\ncline{R1}{R2}
\ncline{y1}{y2}
\ncline{c1}{c2}
\gap
\newcommand\tensortraceseq[2]{%
\traceseq{
\Rnode{qq#1}\qq
\Rnode{y1#1}y
\mm1\com\mm2
\Rnode{Q#1}Q
\Rnode{y2#1}y
#2
\Rnode{e#1}\exists
\Rnode{xx#1}x
\,
\Rnode{P#1}P
\Rnode{x#1}x
\Rnode{f#1}f
\Rnode{y3#1}y
\,\com\;
\Rnode{pp#1}\pp\m{1.5}
\Rnode{a#1}a
\Rnode{yy#1}y
}}
\unaryruleright{\tensor}{\tensortraceseq 1 {\hspace{5ex}}}{\tensortraceseq 2 {\,\tensor\,}}
\ncline{Q1}{Q2}
\ncline{y11}{y12}
\ncline{qq1}{qq2}
\ncline{y21}{y22}
\ncline{e1}{e2}
\ncline{xx1}{xx2}
\ncline{P1}{P2}
\ncline{x1}{x2}
\ncline{f1}{f2}
\ncline{y31}{y32}
\ncline{pp1}{pp2}
\ncline{a1}{a2}
\ncline{yy1}{yy2}
\gap
\newcommand\fza[1]{%
\Rnode{f#1}f
\Rnode{z#1}z
\Rnode{a#1}a
}
\newcommand\fzalines[2]{
\ncline{f#1}{f#2}
\ncline{z#1}{z#2}
\ncline{a#1}{a#2}
}
\newcommand\fzaxlines[2]{
\ncline[offsetB=-.5pt]{f#1}{x#2}
\ncline[offsetB=0pt]{z#1}{x#2}
\ncline[offsetB=.5pt]{a#1}{x#2}
}
\newcommand\existstraceseq[2]{%
\traceseq{
\Rnode{pp#1}\pp
\fza{#1}
\mm2\com\,
#2
}}
\existsrule
{\existstraceseq 1
{
\Rnode{P1}P
\fza3
\mm2
\Rnode{t1}\tensor
\mm2
(
\Rnode{Q1}Q
\fza4
\Rnode{par1}\parr
\Rnode{qq1}\qq
\Rnode{h1}h
\fza5
)
}
}
{\existstraceseq 2
{
\Rnode{exists}\exists
\Rnode{boundx}x
\m2\bigleft
\Rnode{P2}P
\Rnode{x1}x
\mm5
\Rnode{t2}\tensor
\mm5
(
\Rnode{Q2}Q
\mm2
\fza6
\,
\Rnode{par2}\parr
\,
\Rnode{qq2}\qq
\Rnode{h2}h
\Rnode{x2}x
)\bigright
\h1
}
}
\ncline{pp1}{pp2}
\fzalines 1 2
\ncline{P1}{P2}
\fzaxlines 3 1
\ncline{t1}{t2}
\ncline{qq1}{qq2}
\fzalines 4 6
\ncline{par1}{par2}
\ncline{Q1}{Q2}
\ncline{h1}{h2}
\fzaxlines 5 2
\]
Tracking is injective except into occurrences of the variable $x$ bound by an $\exists$ rule (see example above-right, where some $f$, $z$ and $a$ occurrences track to the same $x$ occurrence).

Tracking extends to subterms, for example, above-right
the first occurrence of $fza$ above the rule tracks to the first occurrence of $fza$ below, the second occurrence of $fza$ above tracks to the first bound occurrence of $x$ below, the last occurrence of $fza$ above tracks to the last bound occurrence of $x$ below, and $hfza$ above tracks to $hx$ below.

The tracking of propositional variable occurrences doubles as a tracking of sequent leaves, since leaves are in bijection with propositional variable occurrences.
Leaf tracking is a partial injective function for the \inlinecutrulelabel\ rule, but is otherwise a bijection between leaves above the rule and leaves below.

\subsubsection{Ascent and descent in a proof}\label{sec:ascent-descent}

The \defn{descent} of a symbol/subterm occurrence in a proof is the sequence of symbols/subterms traversed from it by exhaustively iterating tracking functions down the proof (until reaching the conclusion of the proof, or a cut formula); the \defn{ascent} is the converse, exhaustively applying (inverse) tracking functions upwards (until reaching an axiom or a logical rule introducting the symbol).
For example,
consider the proof below-left. Below-right we have shaded the ascent of the bound occurrence of $y$ in the conclusion, which is also the descent of the rightmost occurrence of the subterm $ha$ in the axiom.
\[
\newcommand\shadeproof[2]{%
\parrule{
\existsrule
  {\existsruleleft
    {\axiomruleright{\pp fha\,\com \, Pf#1}}
    {\ex x \pp x\,\com \, Pf#1}
  }
{\ex x \pp x\,\com\,\ex y P f #2}
}
{(\ex x \pp x)\parr(\ex y P f #2)}
}
\shadeproof{ha}{y}
\hspace{20ex}
\shadeproof{\fullshade{ha}}{\fullshade{y}}
\]

\subsection{Vacuous versus witnessed \texorpdfstring{$\exists$}{∃} rules}\label{sec:proof-witness-definition}

Let $\rho$ be an instance
\[\displayexistsrule\]
of an $\exists$ rule in a proof $\Pi$.
The rule is \defn{vacuous} if $x$ does not occur free in $A$; otherwise its \defn{witness} is $t$ (recoverable from $\Asub$ and $A$ because $A$ contains at least one occurrence of $x$), and we say $\rho$ is \defn{witnessed}.
For example, in the proof above-left, the witness of the first $\exists$ rule is $fha$ and the witness of the second
is $ha$.
If every $\exists$ rule in $\Pi$ introduces a distinct bound variable, then
we can unambiguously say that $x$ is vacuous or has witness $t$ (since
$x$ unambiguously determines the $\exists$ rule instance $\rho$).
For example, in the proof above-left (end of Section~\ref{sec:ascent-descent}), the witnesses of $x$ and $y$ are $fha$ and $ha$, respectively.

\section{Cut-free unification nets}\label{sec:cut-free-unets}

\subsection{Linkings}

A \defn{link} is a pair $\{l,\m2\ll\}$ of leaves whose predicate symbols are dual.
A \defn{linking} on a sequent $\Gamma$ is a set of disjoint links whose
union contains every leaf of $\Gamma$.
We draw a link $\{l,\m2\ll\}$ as an undirected edge between the
predicate symbols of $l$ and $\ll$.
For example, a linking on the sequent $\goodprenexinline y$ is shown below, with two links:
\[
\goodprenexlinking y
\]

\subsubsection{Translating a cut-free proof to a linking}\label{sec:translation}

Every cut-free proof $\Pi$ of a sequent $\Gamma$ translates to a linking $\translate\Pi$ on $\Gamma$ in the obvious way, by tracking dual pairs of predicate symbols from each axiom down the proof to form links on $\Gamma$. (\emph{Tracking} was defined in Section~\ref{sec:tracking}.)
For example, Figure~\ref{fig:prenex-translation} (left side) on page~\pageref{fig:prenex-translation} shows the translation of a cut-free proof to the linking displayed above.
A corresponding inductive definition of $\translate\Pi$, implementing the same tracking one rule at a time, is shown in {\figtranslation}Figure~\ref{fig:translation}, where
$\,\linkingon{\theta\!}{\!\Gamma}\,$ asserts that $\theta$ is a linking on $\Gamma$.

\subsection{Unifiable linkings and mgus}

Let $\lambda$ be a linking on $\Gamma$.
Without loss of generality, assume $\Gamma$ is clean
(renaming bound variables if necessary, \eg\ $\,\ex xPx\com\, Qx\,$ becomes
$\:\ex y Py\com\, Qx$\:).
A \defn{unifier} for $\lambda$ is an assignment of terms to
non-vacuous\footnote{Recall (Section~\ref{sec:vacuous-quantifier-formula-defn}) that a quantifier is vacuous if it binds no variable, \eg\ in $\all x\ex y\all z Pzc$ both $\all x$ and $\ex y$ are vacuous, but $\all z$ is not.}
existential variables which equalizes the term sequences at either
end of every link.
For example,
$\sigma=\ueg$ is a unifier for the linking in {\figlinkingunifier}Figure~\ref{fig:linking-unifier}
since upon substituting by $\sigma$ the first link has the three-term sequence $(gu,fx,a)$
at either end, and the second has the one-term sequence $(\hza)$.

The formal unification problem is as follows.  An axiom link between
$P(s_1,\ldots, s_n)$ and $\pp (t_1,\ldots, t_n)$ determines $n$ equations
$s_i \shorteq t_i$.  Taking the union across all links, we obtain a set
of equations $E$.
Solve $E$ for the existential variables (treating free and universal variables as constants).
For example, the link
\psscalebox{.8 .8}{$\;\Rnode P \pp(gu,fv,a)\;\;\Rnode p P(w,fx,a)\ncbar[angle=90,arm=2pt,nodesepA=2.5pt,nodesepB=1pt]{P}{p}\;$}
of the linking in Figure~\ref{fig:linking-unifier}
determines three equations $gu\shorteq w$, $fv\shorteq fx$ and $a\shorteq a$, and the link
\psscalebox{.8 .8}{$\;\Rnode Q Q(\hza)\;\;\Rnode q \qq(y) \ncbar[angle=90,arm=1.5pt,nodesepA=2pt,nodesepB=2.2pt]{Q}{q}\;$}
yields $\hza\shorteq y$, so
$E=\{gu\shorteq w,fv\shorteq fx,a\shorteq a,\hza\shorteq y\}$.
Solve $E$ for the existential variables $v$, $w$ and $y$
(treating the universal $u$ and $x$ as constants): $\ueg$.

A linking is \defn{unifiable} if it has a
unifier.
Unifiability can be determined in linear time \cite{MM76}.
The \defn{most general unifier} or \defn{mgu}
yields every other unifier by substitution. For example, the
mgu of
\newcommand{\mgulambda}[3]{\psset{nodesepA=#1pt,nodesepB=#2pt}
\:\ex x\Rnode x{\pp} x
\;\;
\ex y\m1\Rnode y P y
\ncbar[angle=90,arm=#3pt]{x}{y}\,}%
\psscalebox{.9 .85}{\raisebox{-.5pt}{$\mgulambda{1}{.5}{1}$}}
is $\sigma\!=\!\unifier{\gets x \alpha, \gets y \alpha}$ for $\alpha$ a free variable: every unifier is
$\sigma_t\!=\!\unifier{\gets x t, \gets y t}$ for some term $t$, and
$\sigma$ yields $\sigma_t$ by substituting $t$ for $\alpha$, \ie, $\sigma_t=\sigma\unifier{\gets \alpha t}$.
The mgu is defined up to free variable renaming \cite{LMM88}:
$\unifier{\gets x \beta, \gets y \beta}$ also represents the
mgu, for any other free variable $\beta$.

\subsection{Leaps and switchings}\label{sec:leaps}

Let $\lambda$ be a unifiable linking on a sequent $\Gamma$.
Without loss of generality, assume $\Gamma$ is clean.
A \defn{precedence} $\dep x y$ is an existential quantifier
$\ex x$ and a universal quantifier $\all y$ such that the mgu of $\lambda$
assigns to $x$ a term containing $y$.\footnote{Equivalently, \emph{every}
  unifier of $\lambda$ assigns to $x$ a term containing $y$.  The
  mgu-based definition is well-defined modulo renamining of free
  variables in the mgu, since free variables are distinct from
  bound variables, hence from universal variables.}
For example, the precedences of the linking in Figure~\ref{fig:linking-unifier} are $\dep v x$ and $\dep w u$.
The \defn{graph} $\glambda$ of $\lambda$ is the labelled directed forest
$\Gamma$ together with an undirected edge
between leaves $l$ and $l'$ for every link $\{l,\m2\ll\}$ in $\lambda$, and
a directed edge
from $\ex x$ to $\all y$, called a \defn{leap}, for every precedence $\dep x y$.
For example, the graph of the linking
\[
\rule{0ex}{4ex}\identityEg{2}{2}{5}
\]
whose unique unifier (hence mgu) is $\openU \gets x y\closeU$ is shown below.
\newcommand{\core}{%
\psmatrix[colsep=1ex,rowsep=3ex]
\Rnode x {\dual P x} && \Rnode y {P y} \\
\Rnode e {\exists x} && \Rnode a {\forall y} \\
& \Rnode v \parr &
\ncline[arrows=-,linecolor=dblue] x y
\ncline x e
\endpsmatrix}%
\[
\rule{0ex}{11ex}
\dirtreestyle
\core
\ncline e v
\ncline y a
\ncline a v
\ncline e a
\]
A \defn{switching} of $\lambda$ is any
derivative of $\glambda$
obtained
by deleting all but one edge into each \,$\parr$\, and $\forall$
and undirecting
remaining edges.  For example, the four switchings of
the previous example are below.
\begin{center}\begin{math}\label{four-switchings}
\renewcommand{\gap}{\hspace{12ex}}
\treestyle
\psset{arrows=-}
\core
\ncline e v
\ncline e a
\gap
\core
\ncline e v
\ncline y a
\gap
\core
\ncline a v
\ncline e a
\gap
\core
\ncline y a
\ncline a v
\end{math}\v2\end{center}
\subsection{Correctness criterion}\label{sec:correctness}
A linking is \defn{correct} if it is unifiable and all of its switchings are trees (acyclic
and connected).
For example, the linking above
is correct: all four of its switchings, depicted just above, are trees.
In Section~\ref{sec:cut-free-quadratic-correctness} we prove that correctness can be verified in quadratic time, despite the fact that constructing an explicit mgu, used to extract leaps, may take exponential time and space.

A \defn{cut-free unification net} (or \defn{cut-free unet} for short) on a sequent $\Gamma$ is a correct
linking on $\Gamma$.

\subsubsection{Correctness requires \texorpdfstring{$\tensor$-$\forall$}{⊗-∀} interaction}\label{sec:tensor-forall-interaction}

The following two linkings show that the interaction of tensor $\tensor$ and universal quantification $\forall$ is a necessary part of correctness, via leaps and switchings. Although the linkings differ only by exchanging $\tensor$ for $\parr$, the left is correct, while the right is not.
\[
\goodprenexlinking y
\hspace{16ex}
\badprenexlinking y
\]
The left linking $\theta$ was the subject of Figure~\ref{fig:prenex-translation}, and its sequent $\Gamma$ is an instance of prenex extrusion \mbox{$\goodprenexvdashinline x$}, which is provable in \fomll\ ($x$ not free in $A$), with the right bound variable $x$ renamed to $y$ to avoid ambiguity.
The right linking $\theta'$ differs from $\theta$ in that its sequent $\Gamma'$ has $\tensor$ and $\parr$ interchanged, and $\Gamma'$ is an unprovable instance of prenex extrusion $\badprenexvdashinline x$.
We shall verify that $\theta$ is correct, while $\theta'$ is not.

Both linkings have the same unifier, $\unifier{\gets y x}$.
Their respective graphs are:
\[
\updownpadding{19} 3
\goodprenexleapgraph y
\hspace{40ex}
\badprenexleapgraph y
\]
Here is a switching of each:
\[
\updownpadding{19} 3
\goodprenexswitching y
\hspace{40ex}
\badprenexswitching y
\]
All fours switchings of $\theta$ are trees, including the one above-left. However, the switching of $\theta'$ above-right is not a tree, so $\theta'$ fails to be a unification net.

This example shows that one cannot hope for a factorized correctness criterion which treats the propositional and first-order parts independently, for example, verifying separately that the underlying propositional MLL linking is correct (true for both linkings above), and that quantifier precedence $\depsym$ (together with the subformula relation on quantifiers) is acyclic (also true for both linkings above).

\subsection{Correctness is at worst quadratic time}\label{sec:cut-free-quadratic-correctness}

Unifiability can be verified in linear time \cite{MM76}. However, a standard mgu
of the form $$\unifier{\gets {x_1}{t_1},\ldots,\gets{x_n}{t_n}}$$
may take exponential time and space to construct, and be exponential in size.\footnote{\expunifierfootnote}
Therefore, since such an mgu was used to construct the leaps in the correctness criterion, via
its precedences, correctness is naively exponential time and space.
The following theorem shows that we can check correctness in quadratic time by extracting all mgu precedences without actually having to build the mgu explicitly.
\begin{theorem}[Cut-free quadratic-time correctness]\mbox{}\label{thm:quadratic}\\
The correctness of a cut-free unification net can be verified in quadratic time.
\end{theorem}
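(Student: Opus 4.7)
The plan is to route the verification through a standard union--find--style linear-time unifiability algorithm \cite{MM76} whose intermediate data structure --- essentially a DAG representation of the mgu with maximal term sharing --- already encodes all the information needed to read off precedences, without ever materializing the explicit substitution $\unifier{\gets{x_1}{t_1},\ldots,\gets{x_n}{t_n}}$ (which is where the exponential blow-up lives, by the footnote's $E_n$ example).

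First I would set up the equational system $E$ from the linking exactly as described in Section~\ref{sec:leaps}: for each link between $P(s_1,\ldots,s_n)$ and $\pp(t_1,\ldots,t_n)$ produce the $n$ equations $s_i\shorteq t_i$, and take the union. The total size is linear in the size of the unification net. Run \cite{MM76} on $E$. If it rejects, the linking is not unifiable and so not correct; otherwise it returns in linear time a shared-subterm DAG $D$ in which each non-vacuous existential variable $x$ points to a node $n_x$ representing its assigned term $\sigma(x)$, and the children relation on $D$ faithfully records the subterm relation of $\sigma$.

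Next I would extract the precedences $\dep x y$. By definition, $\dep x y$ holds exactly when $y$ occurs in $\sigma(x)$, equivalently, when the universal variable leaf labelled $y$ is reachable from $n_x$ in $D$. So for each of the $O(n)$ existential variables $x$, perform a depth-first traversal of $D$ starting at $n_x$ and emit a leap $\ex x\toedge\all y$ for every universal leaf $y$ encountered. Each traversal visits at most $O(n)$ nodes of $D$, so all precedences are harvested in $O(n^2)$ time and space, crucially without unfolding the shared DAG into an explicit tree term. This yields the graph $\glambda$ in quadratic time.

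Finally I would check that every switching of $\glambda$ is a tree. Since $\glambda$ has linear size and the switching condition is the standard Danos--Regnier style acyclicity--and--connectedness condition extended to $\forall$ in the obvious way, this can be done in linear time by \cite{Gue99}. Summing the three stages yields the $O(n^2)$ bound. The main obstacle is the middle stage: one must argue that the shared-DAG output of \cite{MM76} really does expose precedences as reachability --- in particular, that no universal variable is ``hidden'' behind an indirection that a naive traversal would miss --- and that the per-existential traversals can be charged against $O(n)$ each despite the sharing. Both points follow from the standard invariants of the \cite{MM76} algorithm (every internal DAG node is a representative of its equivalence class under the solved equations, and children pointers reflect subterm structure of the class's canonical form), but this is the place where the proof has to be careful.
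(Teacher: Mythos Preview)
Your argument is correct and follows the same three-stage outline as the paper (linear-time unifiability via \cite{MM76}, quadratic-time extraction of precedences without materializing the mgu, linear-time switching check via \cite{Gue99}), but the middle stage is implemented differently. The paper takes the \emph{triangular} output of \cite{MM76} --- a sequence of single substitutions $[x_1\mapsto t_1],\ldots,[x_n\mapsto t_n]$ whose sequential composition is the mgu --- and replaces each $t_i$ by a flat term recording only its variable set, so that the composite can be formed in quadratic time while preserving exactly the precedence relation. You instead keep the shared-term DAG and read off precedences by a per-existential DFS for reachable universal leaves. Both devices exploit the same observation (only the \emph{set} of universal variables in $\sigma(x)$ matters, not the term structure carrying them), and both yield the quadratic bound; your DAG-reachability formulation is arguably the more transparent of the two.

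One slip to fix: $\glambda$ need not have linear size, since the number of leaps can itself be quadratic (up to one per existential--universal pair). This does not damage your bound --- \cite{Gue99} is linear in the size of the graph it is handed, hence still at most quadratic in the input --- but the clause ``Since $\glambda$ has linear size'' should read ``Since $\glambda$ has at most quadratic size''.
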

\begin{proof}
  Using the main linear-time unification algorithm of \cite{MM76} we
  construct a sequence of substitutions
  $\unifier{\gets{x_1}{t_1}}$,\ldots,$\unifier{\gets{x_n}{t_n}}$ with $x_i$ not in $t_j$ for $i>j$ whose sequential composition
  is the mgu. In other words, writing $s\sigma_k$ for
$s\sig 1\sig 2\cdots\sig k$
for any term $s$, the mgu is
\[\unifier{\:\gets{x_1}{t_1},\; \gets{x_2}{t_2\sigma_1},\;\ldots,\;\gets{x_n}{t_n\sigma_{n-1}}\:} \]
We now extract all precedences from the mgu using transitive
closure, without the mgu itself. Let $\{y_{i1},\ldots,y_{im_i}\}$ be the set
of universal variables occuring in $t_i$, and define $t'_i$ as the term
$f_iy_{i1}\ldots y_{im_i}$ for a fresh $m_i$-ary function symbol
$f_i$. By construction, the sequential composition of
substitutions $\unifier{\gets{x_1}{t'_1}}$,\ldots,$\unifier{\gets{x_n}{t'_n}}$ has the same precedences
as the mgu, but can be constructed in quadratic time (since each
universal variable appears at most once in $t'_i$).
Thus we can construct the graph in quadratic time.

The graph determines a contractibility graph
  \cite{Dan90} with $\parr\mm2$s and $\forall$s as switched nodes, and
  leaves, $\tensor\mm2$s and $\exists$s as unswitched nodes, checkable
  in linear time \cite{Gue99}.
Hence the overall complexity of correctness is at worst quadratic in the size of the unification net.
\end{proof}
Later we extend this result to unification nets with cuts, in Theorem~\ref{thm:cut-quadratic}, page~\pageref{thm:cut-quadratic}.

\subsection{The translation of a cut-free proof is a cut-free unification net}\label{sec:translation-is-unet}

Recall the translation of a cut-free \fomll proof $\Pi$ to a linking $\translate\Pi$ defined in Section~\ref{sec:translation}.
\begin{theorem}\label{thm:translation}
  The translation $\translate\Pi$ of a cut-free \fomll proof\/ $\mm2\Pi$ is a unification net.
\end{theorem}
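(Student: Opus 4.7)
The plan is to proceed by induction on the structure of $\Pi$, following the rule-by-rule translation of Figure~\ref{fig:translation}, and at each step to verify that the inductively built $\translate\Pi$ is a linking on the conclusion (automatic from the tracking of leaves), is unifiable, and has the property that every switching of its graph is a tree. Since $\Pi$ is cut-free only the axiom, $\parr$, $\tensor$, $\exists$ and $\forall$ rules arise. The axiom base case is immediate: $\translate\Pi$ is a single link between two identical term-sequence atoms, unifiable by the empty substitution, with graph a single edge on two vertices.

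The propositional cases are routine. For $\parr$ the leaves, predicates, links, unifier and old leaps all carry over from the premise; a switching attaches the new $\parr$ vertex by exactly one of its two child edges, adjoining one vertex and one edge to a tree, and is still a tree. For $\tensor$ the two premise mgus have disjoint variables (WLOG), so their union unifies the combined linking, and any switching of the combined graph decomposes as a pair of premise switchings joined by the unswitched $\tensor$ vertex with both its child edges retained, gluing two disjoint trees at a central wedge into one tree.

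The main obstacle is the interaction of the quantifier rules with the leap structure. In the $\exists$ case the leaf-identification convention of Figure~\ref{fig:translation} preserves the vertex set of leaves; only atom term-labels change ($t$ becomes $x$) and one new unswitched $\ex x$ vertex is inserted above $A$. If $\sigma$ is a premise mgu, then extending $\sigma$ by assigning $x$ the term $t\sigma$ (with $x$ freshly named) yields a conclusion mgu, so the conclusion's leaps are exactly the premise's leaps together with one new leap $\dep x z$ per universal variable $z$ occurring in $t\sigma$. The $\forall$ case is dual: the eigenvariable condition that $x$ is not free in $\Gamma$ ensures that the premise mgu remains an mgu of the conclusion, while every existential $y$ whose premise assignment contains the (then free) variable $x$ now contributes a new leap $\dep y x$ into the freshly introduced $\all x$. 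In both quantifier cases the tree property is established by a case split on whether a new leap is retained in the switching: if no new leap is retained, the switching reduces, after relabelling and collapsing the newly added quantifier vertex, to a switching of the premise graph, a tree by the inductive hypothesis; if a new leap $\ex x\toedge\all z$ (or dually $\ex y\toedge\all x$) is retained, then some old edge into the switched $\forall$-endpoint is discarded, and a short connectivity argument, using the uniqueness of paths in the premise switching between the $\ex$-source and the endpoints of the discarded edge, shows the resulting graph is again a tree. Ensuring the argument handles the simultaneous retention of several new leaps emanating from a single $\ex x$ is the technically delicate point.
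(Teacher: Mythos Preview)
Your inductive skeleton matches the paper's, and the axiom, $\parr$ and $\tensor$ cases are fine. The quantifier cases, however, miss the key simplification that makes the paper's proof short, and in the $\exists$ case this leaves a genuine gap.

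For $\forall$: the newly introduced $\all x$ is a \emph{root} of the sequent forest, so it has no outgoing edge. In any switching it therefore has degree exactly one (the single retained incoming edge, whether the child edge from the root of $A$ or a leap $\ex y\toedge\all x$), i.e.\ it is a pendant vertex attached to the premise switching. Both branches of your case split collapse to ``add a pendant to a tree'', so the split is harmless but unnecessary.

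For $\exists$: two problems. First, your claim that extending the premise mgu $\sigma$ by $x\mapsto t\sigma$ yields the \emph{conclusion mgu} is false; it yields only a unifier, and the actual conclusion mgu may be strictly more general (e.g.\ premise $\pp t,\,\ex w Pw$ with $t$ a constant: premise mgu $\{w\mapsto t\}$, your construction $\{w\mapsto t,\,x\mapsto t\}$, actual conclusion mgu $\{x\mapsto w\}$). Since leaps are defined via the \emph{mgu}, your leap calculation is not a priori the right one. Second, and more importantly, you miss the decisive observation: by cleanness of the premise sequent $\Gamma,\,A\substituteto x t$, every variable occurring in $t$ is \emph{free} there, hence in particular $t$ contains no universal variable. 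Consequently the conclusion mgu assigns to $x$ a term with no universal variable either (any universal in it would survive into the specialisation $t$), so there are \emph{no} leaps out of $\ex x$ at all. The new $\ex x$ is then a pendant vertex (its only edge is to the root of $A$), and the tree property is immediate, exactly as in the $\forall$ case. Your ``technically delicate point'' --- handling several retained leaps out of $\ex x$ simultaneously --- is therefore vacuous, and the ``short connectivity argument'' you defer is never needed. As stated, though, your proof is incomplete: you identify that case as the crux yet do not carry it out, and in a setting where such leaps did exist the argument is not obviously valid (two targets $\all z_1,\all z_2$ sharing a $\tensor$ parent would remain in the same component after deleting their old incoming edges, creating a cycle through $\ex x$).
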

\begin{proof}
By structural induction on the proof, with respect to Figure~\ref{fig:translation}. We assume (without loss of generality) that every sequent is clean (bound variables distinct from one another and from free variables).

The base case of an axiom rule is trivial.

$\parr$ case. The mgu for $\tightlinkingon\theta{\Gammacom A\com B}$ is also an mgu for $\tightlinkingon\theta{\Gammacom\, A\parr B}$ since all quantifers and leaves stay the same. Any non-tree switching of $\tightlinkingon\theta{\Gammacom A\parr B}$ will induce a non-tree switching of $\tightlinkingon\theta{\Gammacom\, A\com B}$, since the only addition to the switching graph is an outermost $\parr$ vertex.

$\tensor$ case. The mgu for $\theta\,\cup\,\phi$ is the union of the mgus for $\theta$ and $\phi$, since their bound variables are independent: each comes from either the left hypothesis of the $\tensor$ rule or the right, but not both. Every switching of $\theta\,\cup\,\phi$ is the disjoint union of a switching of $\theta$ and a switching of $\phi$ joined at the new outermost $\tensor$ vertex: due to bound variable independence, there can be no leap between the two.

$\forall$ case. Any mgu for $\tightlinkingon\theta{\Gammacom A}$ is also an mgu of $\tightlinkingon\theta{\Gammacom\all x A}$\,, since $x$ has merely transitioned from free to bound.  Every switching of $\tightlinkingon\theta{\Gammacom\all x A}$ is a switching of $\tightlinkingon\theta{\Gammacom A}$ plus a leap into the new $\forall x$ vertex, which cannot break the property of being a tree since it has no outgoing edge.

$\exists$ case.
Suppose $\sigma$ unifies $\tightlinkingon\theta{\Gammacom\Asub}$.
Assume $x$ occurs free in $A$, otherwise the result is immediate.
Then $\sigma\,\cup\,\unifier{\gets x t}$ unifies $\tightlinkingon\theta{\Gammacom\ex x A}$, since any occurrence of $x$ in a leaf of $\ex x A$ is replaced with an occurrence of $t$ in the same leaf of $\Asub$. Suppose $\tightlinkingon\theta{\Gammacom\ex x A}$ has a switching cycle. It must involve a leap from the new $\ex x$ vertex to some $\all y$, otherwise $\tightlinkingon\theta{\Gammacom\Asub}$ immediately has a switching cycle. Thus $\dep x y$ in $\tightlinkingon\theta{\Gammacom\ex x A}$, so the mgu $\sigma$ assigned to $x$ a term $t$ containing $y$, and $x$ occurs in $A$ (otherwise the mgu would assign a fresh free variable to $x$ instead). Thus $t$ occurs in $\Asub$, hence $y$ occurs free in $\Asub$, contradicting the fact that (without loss of generality) all free and bound variables are distinct.
\end{proof}

\subsubsection{Exponential compression of some proofs}\label{sec:exp-compression}\label{sec:exponential-compression}

On certain cut-free proofs, the translation to a cut-free unification net provides an exponential compression.
Consider the progression of formulas $A_i$ beginning
\[
\newcounter{n}
\newcommand\assocLeft[1]{%
\setcounter{n}{0}\whiledo{\not{\value{n}>#1}}{(\stepcounter{n}}
\blowupArgsOne
\setcounter{n}{0}\whiledo{\not{\value{n}>#1}}{\idot x_{\then})\stepcounter{n}}}
\newcommand\assocRight[1]{%
\setcounter{n}{#1+1}\whiledo{\value{n} > 0}{\addtocounter{n}{-1}(x_{\then}\idot}
\blowupArgsOne
\setcounter{n}{0}\whiledo{\not{\value{n}>#1}}{)\stepcounter{n}}}
\newcommand\existsvec[1]{%
\setcounter{n}{#1+1}\whiledo{\value{n}>0}{\addtocounter{n}{-1}\ex {x_{\then}}}}
\newcommand\expBody[1]{%
\bigleft
\pp
\assocLeft{#1}
\parr
P
\assocRight{#1}
\bigright
}
\newcommand\expRow[1]{
A_{#1} &=& \existsvec{#1} & \expBody{#1}
}
\begin{array}{lcr@{}l}
\expRow 0 \\
\expRow 1 \\
\expRow 2 \\
\expRow 3
\end{array}
\]
While the size of $A_i$ grows linearly in $i$, the unique cut-free proof $\Pi_i$ of $A_i$ grows exponentially in $i$, since its axiom rule contains the predicate $\alpha_i$ with $2^i$ occurrences of the constant $c$, and its dual $\dual\alpha_i$:
\[
\begin{array}{l@{\;\;\;=\;\;\;}l}
\alpha_0 & P\expTermZero  \\
\alpha_1 & P\expTermOne   \\
\alpha_2 & P\expTermTwo   \\
\alpha_3 & P\expTermThree
\end{array}
\]
The translation $\translate{\Pi_i}$ of $\Pi_i$ is
\[ \expsmalleg \]
which is the unique cut-free unification net on $A_i$. This grows linearly with $i$, hence $\translate{\Pi_i}$ is exponentially smaller than $\Pi_i$.

The unique cut-free Girard net $G_i$ on $A_i$ also grows exponentially in $i$, since its axiom link is between $\alpha_i$ and $\dual\alpha_i$.
In Section~\ref{sec:complexity} we discuss the complexity issues of Girard nets in depth, and explain how unification nets resolve them.

\section{Cut-free surjectivity theorem}\label{sec:cut-free-surjectivity}

In this section we show that every cut-free unification net derives from a cut-free proof.
In standard proof net theory, a surjectivity theorem of the following form would typically be called a \emph{sequentialization} theorem. However, as remarked in the Introduction (Section~\ref{sec:beyond-sequentialization}), and emphasized in the commuting diagram in Figure~\ref{fig:beyond-sequentialization}, in the context of unification nets the inverse of the surjection expresses both sequentialization (choice of rule orderings) and explicit witness assignment (choice of witnesses). Thus we simply label the theorem as \emph{surjectivity}.
\begin{theorem}[Cut-free surjectivity]\mbox{}\label{thm:cut-free-surj}\label{thm:surjectivity}\label{thm:cut-free-surjectivity}\\
The translation from cut-free proofs to cut-free unification nets is surjective.
\end{theorem}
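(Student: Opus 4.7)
The plan is to build a cut-free proof $\Pi$ of $\Gamma$ with $\translate\Pi=\lambda$ by induction on the total number of connectives and quantifiers in $\Gamma$, extracting a bottom rule at each step and invoking the induction hypothesis on a strictly smaller cut-free unification net.  The base case, in which $\Gamma$ has no connectives or quantifiers, reduces by connectedness of the (unique) switching and the absence of existentials to a single link between $P(t_1,\ldots,t_n)$ and $\pp(t_1,\ldots,t_n)$ with identical argument sequences, matching an \inlineaxrulelabel rule.

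For the inductive step I would extract a bottom rule according to the following priority.  \emph{(i)} If some outermost formula is $A\parr B$, invert the $\parr$ rule: the linking on $\Gamma',A,B$ has the same mgu and its graph loses only the $\parr$ vertex, so correctness is preserved.  \emph{(ii)} If some outermost formula is a vacuous quantifier or a non-vacuous $\all x A$, invert the $\forall$ rule; cleanliness of $\Gamma$ ensures $x$ is not free in $\Gamma'$, and every leap $\ex y\toedge\all x$ simply vanishes when the $\all x$ vertex is removed, preserving acyclicity and connectivity of each switching.  \emph{(iii)} If some outermost formula is a non-vacuous $\ex x A$ with no outgoing leap (that is, $\sigma(x)$ contains no universal variable, for $\sigma$ the mgu), invert the $\exists$ rule with witness $t=\sigma(x)$; the sub-linking on $\Gamma',\Asub$ has mgu $\sigma$ with $x\mapsto t$ removed, and its graph loses only the $\ex x$ vertex.  \emph{(iv)} Otherwise every outermost formula is a $\tensor$ or a non-vacuous $\ex x$ with outgoing leaps, and I would exhibit a \emph{splitting $\tensor$} that decomposes $\lambda$ into two correct sub-unification nets joined by that $\tensor$, then invert the corresponding $\tensor$ rule.

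The hard part will be case \emph{(iv)}, the splitting-$\tensor$ argument.  Classically \cite{DR89} one locates a splitting $\tensor$ in any correct \mll net without outermost $\parr$ via an empire/kingdom construction.  For unification nets I would adapt this through the contractibility characterization already used in the proof of Theorem~\ref{thm:quadratic}: the graph $\glambda$, with $\parr$s and $\forall$s as switched nodes and leaves, $\tensor$s and $\ex$s as unswitched, is Danos-contractible \cite{Dan90}, and each contraction step corresponds to an extractable bottom rule.  The delicate point is ensuring that at stage \emph{(iv)} a splitting $\tensor$ exists whose two halves are closed under the leap relation, so the decomposition yields two self-contained correct sub-nets; a leap $\ex x\toedge\all y$ crossing a candidate split would, I expect, force a cycle in some switching and thus contradict correctness.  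Crucially, because leaps arise from an mgu, they respect the bound-variable partition induced by the $\tensor$ rule (by cleanliness, a witness chosen in one branch cannot mention a universal variable bound in the other), so no leap can cross a correctly chosen split; and acyclicity of the precedence relation guarantees that iterating \emph{(i)}--\emph{(iv)} eventually unblocks every $\ex$, so the induction terminates.
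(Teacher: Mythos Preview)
Your inductive structure and cases (i)--(iii) match the paper's proof essentially verbatim. The difference, and the gap, is in case (iv). You gesture at Danos contractibility and then argue that ``leaps arise from an mgu, [so] they respect the bound-variable partition induced by the $\tensor$ rule''---but this reasons backwards from a proof you do not yet have. The task is to locate a splitting $\tensor$ in the graph $\glambda$ \emph{including its leaps}; that such a $\tensor$ exists does not follow directly from MLL splitting, because leaps are extra edges with no MLL counterpart, and your contractibility remark does not by itself yield a splitting-tensor lemma for this enriched graph structure. The sentence ``a leap \ldots crossing a candidate split would, I expect, force a cycle'' is the crux, and ``I expect'' is not a proof.

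The paper closes exactly this gap with a device you do not mention: the \emph{frame} $\mllencode\theta$ (Section~\ref{sec:frame}), which encodes each precedence $\dep x y$ as a fresh MLL link $Q$--$\qq$ with an added $\tensor$ at the $\exists$ end and $\parr$ at the $\forall$ end, then deletes all quantifiers and terms. This produces a genuine MLL proof net (Lemma~\ref{lem:frame-is-mll-net}), so the classical splitting-tensor theorem applies without modification. The remaining work is Lemma~\ref{lemma:frame-split}: no $\tensor$ \emph{added} by the encoding can split (because its partner $\parr$ would then disconnect some switching), so the splitting $\tensor$ found by Danos--Regnier must be an original root $\tensor$ of $\Gamma$. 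This reduction to MLL is the missing idea in your case (iv); with it, the leap-closure property you worry about becomes automatic, since a $\tensor$ that splits the frame necessarily splits $\glambda$ with all its leaps.
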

We prove this theorem via an MLL encoding of a unification net, called the \defn{frame}, defined in Section~\ref{sec:frame}, via which we can appeal to the standard MLL splitting tensor theorem \cite{DR89}. The proof of Theorem~\ref{thm:surjectivity} is Section~\ref{sec:surjectivity-proof}.

\subsection{The MLL frame of a unification net}\label{sec:frame}

Let $\theta$ be a unification net on an \fomll sequent $\Gamma$.
Define the \defn{frame}
of $\theta$ by exhaustively applying the following subformula rewrites, in order, to obtain a linking $\mllencode\theta$ on an MLL sequent $\mllencode\Gamma$:
\begin{itemize}
\myitem 1 \emph{Encode every precedence $\dep x y$ as a new link.} Iterate through the precedences $\dep x y$ one by one. For each such precedence $\dep x y$, with corresponding subformulas $\ex x A$ and $\all y B$, add a link as follows. Let $Q$ be a fresh predicate symbol (distinct for each precedence). Replace $\ex x A$ by $Q\tensor \ex x A$ and $\all y B$ by $\qq\parr\all y B$, and add a link between $Q$ and $\qq$.
\myitem 2 \emph{Delete quantifiers.} After step 1, replace every subformula of the form $\all y A$ or $\ex x\mm2 A$ by $A$.
(We no longer need their leaps, because we encoded leaps as links in step 1.)
\myitem 3 \emph{Delete terms.} After step 2, replace every predicate $Pt_1\ldots t_n$ by a nullary predicate symbol $P$.
\end{itemize}
For example, the frame of the unification net $\theta$
\[ \identityEgForMultiplicativeEncoding 3 3 6\rule{0ex}{3ex} \]
(already considered in Section~\ref{sec:leaps}) is the following MLL linking $\mllencode\theta$:
\[ \identityMultiplicativeEncoding\rule{0ex}{4ex} \]
Note that this is a correct MLL proof net.
We generalize this in the following proposition.
\begin{lemma}\label{lem:frame-is-mll-net}
Let $\theta$ be a unification net on $\Gamma$. The frame $\mllencode\theta$ on $\mllencode\Gamma$ is an MLL proof net.
\end{lemma}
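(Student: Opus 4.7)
The plan is to verify the Danos--Regnier criterion for $\mllencode\theta$: every switching of $\mllencode\theta$ must be a tree. I will construct a map $s\mapsto s^*$ from switchings of $\mllencode\theta$ to switchings of $\theta$ and transfer the tree property across it, using the correctness of $\theta$.

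Structurally, $\mllencode\theta$ expands each $\forall y$ of $\Gamma$ into a right-leaning $\parr$-chain $P_1\parr(P_2\parr\cdots\parr(P_{k_y}\parr B)\cdots)$ whose $k_y$ new left leaves $\qq_1,\ldots,\qq_{k_y}$ come one per precedence into $\forall y$; dually, each $\ex x$ becomes a $\tensor$-chain with new leaves $Q$, one per precedence out of $\ex x$. Each precedence $\dep{x_i}{y_j}$ contributes a single axiom link $Q_{ij}\leftrightarrow\qq_{ij}$. Crucially, $\tensor$ admits no switching choice, so every $Q_{ij}$ is unconditionally connected to the rest of the tree via its $\tensor$-chain at $\ex{x_i}$.

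Given $s$, define $s^*$ by copying $s$ on original $\parr$'s and, at each $\forall y$-chain, reading off the unique \emph{winner} $W\in\{B,\qq_1,\ldots,\qq_{k_y}\}$ exposed at the top (the first $\qq_j$ whose $P_j$ chose its left child, with $P_1,\ldots,P_{j-1}$ all choosing right; or $W=B$ if every $P_i$ chose right). Then $s^*$ retains either the edge from $B$ (if $W=B$) or the leap from $\ex{x_j}$ (if $W=\qq_j$) at $\forall y$. By correctness of $\theta$, $s^*$ is a tree. To conclude $s$ is a tree, decompose it into its \emph{main component} $M$ (containing the conclusion) and the \emph{auxiliary components} (sub-chain fragments cut off at each chain). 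The main component $M$ arises from $s^*$ by an edge-to-path substitution: each retained leap $\ex{x_j}\to\forall y$ is replaced by the path $P_1\!-\!\cdots\!-\!P_j\!-\!\qq_j\!-\!Q_j$ followed by the $\tensor$-chain at $\ex{x_j}$; each retained edge to $B$ is replaced by $P_1\!-\!\cdots\!-\!P_{k_y}\!-\!B$. These substitutions preserve acyclicity and connectedness, so $M$ is a tree. Each auxiliary component is itself a partial sub-chain (a tree by the same analysis applied recursively), attached to $M$ via its $\qq$-leaves' axiom links to $Q$-partners already in $M$.

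The main obstacle is ruling out cycles introduced by the gluing of auxiliary components to $M$. The key idea is that $s\mapsto s^*$ is fibered: the internal switchings of an auxiliary sub-chain live in a region severed from $M$'s upward path at $\forall y$, so any cycle in $s$ passing through such a region can be re-routed, along the corresponding leap in $\theta$'s graph, into a cycle in $s^*$, contradicting correctness of $\theta$. Making this re-routing precise is essentially a recursive reapplication of the contraction analysis to each sub-chain; conceptually clean but notationally involved.
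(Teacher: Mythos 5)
Your strategy is in essence the paper's own (the paper disposes of the lemma in three lines by asserting that switchings correspond before and after, with the new $\tensor$ playing the outgoing end of a leap and the new $\parr$ its incoming end), and your ``winner'' map $s\mapsto s^*$ together with the edge-to-path substitution for the main part is a correct way to make that correspondence precise. But your write-up stops exactly at the one point that genuinely needs an argument -- the extra switching freedom created when a $\forall y$ with several incoming leaps is binarized into a $\parr$-chain -- and the fix you sketch for it does not work as stated. You propose to take a cycle of $s$ that runs through a severed sub-chain fragment and ``re-route it along the corresponding leap into a cycle in $s^*$''. If such a cycle entered a fragment through a non-winner leaf $\qq_{j'}$, the corresponding leap $\ex {x_{j'}}\toedge\all y$ is precisely one that $s^*$ does \emph{not} retain (only the winner's edge at $\all y$ survives in $s^*$), so there is nothing in $s^*$ to re-route along; you would have to pass to a \emph{different} switching of $\theta$, chosen consistently for every chain the cycle visits, and none of this is carried out -- you explicitly defer it as ``notationally involved''. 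Since this is the entire content of the lemma beyond bookkeeping, the proof as it stands has a genuine gap.

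The observation that closes the gap, and makes both the recursion and the re-routing unnecessary, is that each maximal severed fragment of a chain is attached to the rest of the switching at \emph{exactly one} point, hence is pendant. Inside a chain every binary $\parr$ keeps exactly one incoming edge, so under any switching the chain splits into runs: the top run joins the parent of the old $\all y$ to the winner (this is your edge-to-path substitution), while every other run is a path whose edge up into the $\parr$ above it has been deleted, whose interior $\qq_i$'s are detached (each such $\qq_i$ is itself a pendant leaf hanging on its $Q_i$, which sits on an unswitched $\tensor$-chain), and whose only external attachment is at its bottom: the single link from its lowest $\qq$ if that $\parr$ chose its leaf, or the edge from the root of the original subformula $B$ if the bottom $\parr$ chose right. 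A subgraph glued to the rest at a single vertex can neither lie on a cycle of $s$ nor disconnect it, so once your main part $M$ is a tree (which your substitution argument gives, from correctness of $\theta$), the whole switching $s$ is a tree and you are done. Alternatively, you could note that the frame construction preserves the difference between the number of links and the number of $\tensor$s, so (as the paper does elsewhere for cut reduction) only acyclicity of switchings needs checking, and by the single-attachment remark any cycle of $s$ uses only top runs, hence maps directly onto a cycle of $s^*$, contradicting correctness of $\theta$.
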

\begin{proof}
  Each step (1)--(3) in the frame construction preserves the property that every switching is a tree. Steps (1) and (2) together replace every leap with a link, and since the new $\tensor$ represents the outgoing end of the leap and the new $\parr$ represents the incoming end, switchings correspond before and after.
Step (3) has no effect on switchings (since it just re-labels leaves).
\end{proof}
For example, here is the graph of $\identityEgForMultiplicativeEncoding 2 2 4$, followed by its four switchings:
\[
\dirtreestyle
\renewcommand{\core}{%
\psmatrix[colsep=1ex,rowsep=3ex]
\rule{0ex}{2ex}\Rnode x {\pp x}
&& \Rnode y {P y}
\\
{\color{blue}\Rnode e {\exists x}} && {\color{red}\Rnode a {\forall y}} \\
& \Rnode v \parr &
\ncline[arrows=-,nodesep=1.5pt] x y
\ncline x e
\endpsmatrix}
\renewcommand{\gap}{\hspace{8.5ex}}
\core
\ncline e v
\ncline y a
\ncline a v
\ncline[linecolor=dpurple]  e a
\hspace{2ex}
\psset{arrows=-}
\gap
\core
\ncline e v
\ncline[linecolor=dpurple]  e a
\gap
\core
\ncline e v
\ncline y a
\gap
\core
\ncline a v
\ncline[linecolor=dpurple]  e a
\gap
\core
\ncline y a
\ncline a v
\]
Correspondingly, its frame
\( \identityMultiplicativeEncoding\rule{0ex}{6ex} \)
has the following graph and four switchings:
\[\label{four-mll-switchings}
\dirtreestyle
\renewcommand{\core}{%
\psmatrix[colsep=0ex,rowsep=3ex]
\rule{0ex}{5ex}
{\color{blue}\Rnode 1 Q}
&&
\Rnode x \pp\mm5
&&
{\color{red}\Rnode 2 \qq}
&&
\mm5\Rnode y P
\\
&
{\color{blue}\Rnode e {\tensor}}
&&&&
{\color{red}\Rnode a {\parr}}
\\[-1ex]
&&&
\Rnode v \parr
\\[-4ex]
\ncline[nodesepA=2pt,linecolor=blue] 1 e
\ncline x e
\psset{arrows=-}
\ncbar[angle=90,arm=5pt,linecolor=dpurple]{1}{2}
\ncbar[angle=90,arm=10pt]{x}{y}
\endpsmatrix}
\renewcommand{\gap}{\hspace{3.8ex}}
\core
\ncline e v
\ncline a v
\ncline y a
\ncline[nodesepA=2pt,linecolor=red] 2 a
\h5
\psset{arrows=-}
\gap
\core
\ncline e v
\ncline[nodesepA=2pt,linecolor=red] 2 a
\gap
\core
\ncline e v
\ncline y a
\gap
\core
\ncline a v
\ncline y a
\gap
\core
\ncline a v
\ncline[nodesepA=2pt,linecolor=red] 2 a
\]
Observe the direct correspondence, switching for switching.

We shall require the following frame-related lemma in the proof of Theorem~\ref{thm:surjectivity} (Cut-free surjectivity).
Let $\theta$ be a unification net on $\Gamma$.
A $\tensor$ root vertex $v$ \defn{splits} if deleting $v$ (and its two incoming edges) from the graph $\graphof\theta$ disconnects the it into two connected components.
\begin{lemma}\label{lemma:frame-split}
  No tensor added during the frame construction splits.
\end{lemma}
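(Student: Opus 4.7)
The plan is to show that every added root tensor $v = Q \tensor A'$ in $\graphof{\mllencode\theta}$ fails to be a cut vertex, which is equivalent to not splitting. Concretely, I will exhibit a path from $Q$ to $A'$ in $\graphof{\mllencode\theta}$ that avoids $v$. Such a $v$ arises from some precedence $\dep{x}{y}$ of $\theta$, accompanied by the added par $w = \qq \parr B'$ and the added link $\{Q, \qq\}$; every path I construct begins with this added link, stepping from $Q$ to $\qq$.

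The argument splits on whether the subformula $\all y B$ sits inside $A$ or not. If $\all y B$ is inside $A$, then $w$ (hence $\qq$) lies in the subtree rooted at $A'$, and a sequence of tree-edges from $\qq$ upward to $A'$ completes the required path without touching $v$. If $\all y B$ lies outside $A$ -- necessarily in a different modified formula $F_2'$ of $\mllencode\Gamma$ -- then I need the key claim: $\mllencode\theta$ contains a \emph{cross-link}, meaning a link with one endpoint inside $A'$'s subtree and the other outside. Suppose for contradiction that every link incident to a leaf of $A$ lands in $A$. By cleanness, $y$ occurs only in the scope of $\all y B$, so $y$ does not occur in $A$, and each existential variable of $\Gamma$ has all its occurrences confined to the scope of its binder. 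The unification equations contributed by the links inside $A$ therefore involve only symbols appearing in $A$ (free variables act as constants in the mgu), so $\sigma(x)$ could not contain $y$, contradicting $\dep{x}{y}$.

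The same analysis gives more: the unification chain witnessing $y \in \sigma(x)$ decomposes into a sequence of links in $\mllencode\theta$ joined by tree-paths within the scopes of shared existential variables; each such scope lies entirely inside or entirely outside $A'$'s subtree, with cross-links mediating the transitions. Concatenating, I obtain a walk in $\graphof{\mllencode\theta}$ from a leaf of $A'$ to a leaf of $B'$ using only tree-edges interior to formula subtrees and link-edges between leaves -- none of them incident to $v$, whose only incidences are the tree-edges down to $Q$ and $A'$. From the terminal leaf of $B'$ I climb tree-edges up to $w$, step to $\qq$, and prepend the added link to $Q$, yielding the desired path. I expect the main obstacle to be rigorously translating the abstract unification chain into a graph-theoretic path that avoids $v$; cleanness is essential here, as it confines each bound variable's occurrences to a single subtree and thereby controls on which side of $v$ the chain's intermediate vertices fall.
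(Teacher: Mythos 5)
There is a genuine gap, and it sits exactly where you flag ``the main obstacle.'' Your case-2 argument needs the assertion that every scope relevant to the unification chain ``lies entirely inside or entirely outside $A'$'s subtree,'' and that $\all y B$ not inside $A$ forces it into a different formula. Neither follows from cleanness alone: a quantifier binding $z$ \emph{above} the subformula $\ex x A$ has a scope that properly contains $A$, so $z$ may occur in leaves on both sides of $v$ (shape $\ex z\bigl((\ex x A)\parr R z\bigr)$), and $\all y$ may likewise be an ancestor of $\ex x A$ rather than lying in another formula. In such a situation your closed-subsystem inference ``the equations from links inside $A$ involve only symbols of $A$, so $\sigma(x)$ cannot contain $y$'' fails, because $\sigma(x)$ can pick up $y$ transitively through a straddling existential variable $z$ whose defining equations come from links wholly outside $A$; and the tree-path joining two occurrences of such a $z$ passes through $v$, breaking your concatenated walk. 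What rescues the statement is precisely that $v$ is a \emph{root} of $\frameof\Gamma$: this forces everything above $\ex x A$ in its formula to be an unwrapped quantifier prefix, hence every variable occurring in $A$ occurs only in $A$ and the ancestor case for $\all y$ is impossible. You never derive this rootness structure, and without it the key claims are unjustified (indeed false for non-root added tensors). In addition, the decomposition of ``$y$ occurs in $\sigma(x)$'' into a chain of links joined by same-side tree-paths is itself only asserted; it is an occurs-propagation lemma about mgus that would need a proof, and it is the technical heart of your route.

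You are also working much harder than necessary, because you never use the correctness of $\theta$, whereas the paper gets the lemma almost for free from Lemma~\ref{lem:frame-is-mll-net} (the frame is an MLL proof net, so \emph{every} switching is a tree, in particular connected). The paper's argument: the added tensor has the form $Q\tensor C$ with $Q$ fresh, so $Q$'s only edges are the tree edge to the tensor and the link to $\qq$; if the tensor split, the added $\qq\parr D$ would lie on the $Q$-side, so every path from the tensor to that $\parr$ would have to traverse the $Q$--$\qq$ link and then the tree edge from $\qq$ into the $\parr$; the switching that deletes that edge is therefore disconnected, a contradiction. If you want to salvage your unification-chain route, you must (i) prove the rootness consequences above and (ii) prove the chain-decomposition lemma; alternatively, replace the whole of case 2 by the switching argument just sketched.
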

\begin{proof}
Let the MLL proof net $\frameof\theta$ on $\frameof\Gamma$ be the result of applying the frame construction to the unification net $\theta$ on $\Gamma$.
Every tensor added during the construction has the form $Q\tensor C$ for a fresh predicate symbol $Q$. Let $\qq\parr D$ be the subformula of the dual predicate symbol $\qq$, also added during the construction.
If the $\tensor$ splits then the unique path from the $\tensor$ to the $\parr$ in the graph $\graphof{\frameof\theta}$ traverses the link from $Q$ to $\qq$. Thus every switching which deletes the edge from $\qq$ into the $\parr$ is disconnected, contradicting the fact that (by Lemma~\ref{lem:frame-is-mll-net}) $\frameof\theta$ is an MLL proof net, every one of whose switchings is a tree.
\end{proof}

\subsection{Proof of cut-free surjectivity theorem}\label{sec:surjectivity-proof}

\begin{proofof}{Theorem~\ref{thm:surjectivity} (Cut-free surjectivity)}
  Let $\theta$ be a cut-free unification net on $\Gamma$. We proceed by induction on the number of connectives in $\Gamma$. In the base case $\Gamma$ is $Pt_1\ldots t_n\com\,\pp t_1\ldots t_n$ for some $n$-ary predicate symbol $P$ and terms $t_i$, hence the corresponding axiom translates to $\theta$, a single link.

For the induction step, let $\graph$ be the graph of $\theta$.
\begin{itemize}
\myitem{$\parr$}
Suppose $\Gamma$ is $\Delta\com A\m2\parr B$. Let $\Gamma'$ be $\Delta\com A\com B$ and define $\theta'$ on $\Gamma'$ by the same links as $\theta$ (identifying the leaves of $\Gamma'$ with those of $\Gamma$). The linking $\theta'$ is a unification net because (a) the mgu of $\theta$ is also the mgu of $\theta'$ (since all quantifiers and terms remain untouched, so the unification problem is identical) and (b) every switching of $\theta'$ is a tree, since were some switching of
$\theta'$ not a tree, it would induce a non-tree switching of
$\theta$ by adding an edge to the deleted $\parr$ down from the root of $A$ (or of $B$).
Appealing to induction with $\theta'$ yields a cut-free proof $\Pi'$ whose translation is $\theta'$. Appending the par rule $\frac{\Delta\com A\com B}{\Delta\com A\m2\parr B}$ yields a cut-free proof $\Pi$, whose translation is $\theta$ because all links pass through the $\parr$ rule.
\myitem{$\forall$}
Suppose $\Gamma$ is $\Delta\com\all x A$. Let $\Gamma'$ be $\Delta\com A$ and define $\theta'$ on $\Gamma'$ by the same links as $\theta$ (identifying the leaves of $\Gamma'$ with those of $\Gamma$). The mgu of $\theta$ is also the mgu of $\theta'$ since $x$ has only transitioned from a universal variable to a free variable (hence the unification problem is identical).
Every switching of $\theta'$ is a tree, since were some switching of
$\theta'$ not a tree, it would induce a non-tree switching of
$\theta$ by adding an edge down from the root of $A$ to the deleted $\all x$.
Appealing to induction with $\theta'$ yields a cut-free proof $\Pi'$ whose translation is $\theta'$. Appending the $\forall$ rule $\frac{\Delta\com A}{\Delta\com \all x A}$ yields a cut-free proof $\Pi$, whose translation is $\theta$ because all links pass through the $\forall$ rule.

\myitem{$\exists$}
If $\graph$ has a root $\exists$ with no outgoing leap,
say $\ex x$, we write down a final $\exists$ rule as follows.
Let $\sigma$ be the mgu of $\theta$, assigning the term $t$ to $x$.
Delete $\ex x$ by replacing the corresponding formula $\ex x A$ in $\Gamma$ by $A\substitute{\gets x t}$ (substituting $t$ for $x$ throughout $A$) to form $\Gamma'$, write down a final $\exists$ rule inferring $\Gamma$ from $\Gamma'$, and appeal to induction with $\theta'$ on $\Gamma'$.
We obtain the mgu of $\theta'$ on $\Gamma'$ by deleting the assignment $\gets x t$ from $\sigma$ and replacing every other assignment $\gets y u$ with $\gets y u'$ where $u'=u\substitute{\gets x t}$ (substituting $t$ for $x$ throughout $u$).
Every switching of the graph $\graph'$ of $\theta'$ on $\Gamma'$ is a tree because each switching induces one in $\graph$ (since the deleted $\exists x$ was a root of $\Gamma$ and every leap of $\graph'$ is also a leap in $\graph$).\footnote{Intuitively, the fact that $\ex x$ has no leap means that no earlier $\forall$ rule requires this $\exists$ rule to proceed it, in order to hide free variables occurrences in the $\forall$ rule context. Thus it is safe to write down this $\exists$ rule as the final rule of a proof (since no $\forall$ rule is forced to be below the $\exists$ rule).}

\myitem{$\exists\tensor$}
Otherwise every root of $\graph$ is either an $\exists$ with an outward leap or a $\tensor$.
Let $\frameof\theta$ on $\frameof\Gamma$ be the frame of $\theta$ on $\Gamma$ (defined in Section~\ref{sec:frame}).
By the MLL splitting tensor theorem \cite{DR89} some $\tensor$ root vertex $v$ of $\frameof\theta$ on $\frameof\Gamma$ splits.
By Lemma~\ref{lemma:frame-split} $v$ is a $\tensor$ vertex in $\Gamma$, and since every root $\exists$ has an outward leap, $v$ is a root (since no root $\tensor$ of $\frameof\Gamma$ can result from step 2 in the frame construction deleting an $\exists$ vertex below it).
Thus $v$ splits in $\graph$: deleting $v$ (and its two incoming edges) disconnects $\graph$ into $\graph_1$ and $\graph_2$.
Let
$\Gamma_1$ and $\Gamma_2$ be the underlying sequents of $\graph_1$ and $\graph_2$,
and let $\theta_1$ and $\theta_2$ be the respective restrictions of $\theta$.
Since $v$ splits, each $\theta_i$ is a unification net: its mgu is by restriction from $\theta$, and any non-tree switching of $\theta_i$ would induce a non-tree switching of $\theta$.
Write down a $\tensor$ rule inferring $\Gamma$ from $\Gamma_1$ and $\Gamma_2$, and appeal to induction with $\theta_1$ on $\Gamma_1$ and $\theta_2$ on $\Gamma_2$.
\end{itemize}
\vspace{-4.2ex}\end{proofof}

\section{Canonicity Theorem}\label{sec:canonicity}

The cut-free \fomll proofs
\[
\redundancyproof\firstwitness{x}
\h{20}
\otherredundancyproof\secondwitness{x}
\]
are \emph{equivalent} in the sense that the left yields the right by commuting the order of the $\exists$ rules and replacing one arbitrary choice of existential witness, $\firstwitness$, by another, $\secondwitness$.
While they have distinct Girard nets (because Girard nets inherit redundant explicit exponential witnesses),
they have the same unification net (the one in Figure~\ref{fig:canonicity} of the Introduction):
\[
\nonredundantunificationnet x\rule{0ex}{4ex}
\]
In Section~\ref{sec:equivalence} we formalize two proofs as equivalent if one can be obtained from the other by rule commutations
and witness replacement,
and in Section~\ref{sec:canonicity-proof} prove:
\begin{theorem}[Canonicity]\label{thm:canonicity}
Two cut-free \fomll proofs are equivalent (modulo rule commutations and witness replacement) if and only if they have the same unification net.
\end{theorem}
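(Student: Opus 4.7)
The plan is to prove each direction separately.

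For the forward direction ($\Rightarrow$), I would check that each primitive equivalence step preserves the translation $\translate\Pi$ defined inductively in Figure~\ref{fig:translation}. Every commutation in Figure~\ref{fig:commutations} rewrites a pair of consecutive rules while leaving the surrounding sequents fixed and merely reordering which dual-predicate pairs cross which rule, so tracking these pairs from axioms down to the conclusion (as in Section~\ref{sec:translation}) produces the same linking. Witness replacement at an $\exists$ rule changes the terms labelling leaves of $\Asub$ without changing the leaves themselves; since a linking is a set of leaf pairs with no reference to terms, the translation is unaffected.

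For the backward direction ($\Leftarrow$), I would induct on the number of connectives in the conclusion $\Gamma$, mirroring the case analysis of Theorem~\ref{thm:cut-free-surjectivity}. Given cut-free proofs $\Pi_1$ and $\Pi_2$ with $\translate{\Pi_1}=\translate{\Pi_2}=\theta$, I would designate a \emph{canonical last rule} determined by $\theta$ using the same priority as Section~\ref{sec:surjectivity-proof}: a root $\parr$ or $\forall$ in $\Gamma$ if one exists; failing that, a root $\exists$ of $\graphof\theta$ with no outgoing leap; failing that, a splitting $\tensor$ in $\Gamma$, supplied by the MLL splitting tensor theorem applied to $\frameof\theta$ together with Lemma~\ref{lemma:frame-split}. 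For each $\Pi_i$, I would argue that the rule introducing this designated connective can be pulled down to become the last rule by a sequence of commutations from Figure~\ref{fig:commutations} (and, in the $\exists$ case, a witness replacement), yielding an equivalent proof $\Pi_i'$. Peeling off the common last rule leaves either a single pair of subproofs (for $\parr$, $\forall$, $\exists$) or two pairs (for $\tensor$, using that the splitting property separates the contexts), each pair translating to the same smaller unification net; the induction hypothesis delivers equivalence of the subproofs and hence of $\Pi_1$ and $\Pi_2$.

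The main obstacle is the $\exists$ case, where commuting the target $\exists x$ rule $\rho$ down past an intervening $\forall y$ rule requires the side condition $y \notin t$ of Figure~\ref{fig:commutations}, with $t$ the witness of $\rho$. If $\rho$'s current witness $t_i$ in $\Pi_i$ contains $y$, the commutation is blocked. The plan is to first apply witness replacement at $\rho$, replacing $t_i$ by the term that the mgu of $\theta$ assigns to $x$: since $t_i$ and the mgu's term are both unifiers of the same subsystem of axiom equations solved in the subproof above $\rho$, the replacement propagates coherently through the subproof to give a valid equivalent proof. By definition of leap, the universal variables occurring in the mgu's term for $x$ are exactly those $\all y$ with $\dep x y$; the hypothesis that $\rho$ has no outgoing leap therefore forces the new witness to contain no universal variable at all, satisfying $y \notin t$ for every $\forall$ rule standing between $\rho$ and the bottom, while the remaining $\exists/\tensor$, $\exists/\parr$ and $\exists/\exists$ commutations carry no side condition. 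This conversion of the semantic leap condition into the syntactic commutation side condition is the key technical step; the $\parr$, $\forall$ and $\tensor$ cases are routine in comparison, with the $\forall/\tensor$ side condition guaranteed by cleanness of $\Gamma$ and the $\tensor$ context separation supplied by the splitting property.
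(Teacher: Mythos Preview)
Your forward direction is fine and, as you surmise, routine: the paper does not even spell it out.

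Your backward direction has the right skeleton but a genuine gap in the $\exists$ case. You propose to replace the witness $t_i$ of the single rule $\rho$ by the mgu's term for $x$, claiming this ``propagates coherently through the subproof to give a valid equivalent proof.'' It does not. A single witness replacement can break axioms, because the two atoms of an axiom may have syntactically identical subterms lying in the scopes of \emph{different} existential variables. Concretely, take the proof of $\ex x\pp x\com\,\ex y Py$ with axiom $\pp a\com Pa$, then $\exists y$ with witness $a$, then $\exists x$ with witness $a$. The mgu assigns $x\mapsto\alpha$, $y\mapsto\alpha$ for fresh $\alpha$; $\ex x$ has no outgoing leap. Replacing only the witness of $x$ by $\alpha$ turns the axiom into $\pp\alpha\com Pa$, which is ill-formed. (The paper itself notes in Section~\ref{sec:witness-replacement} that single witness replacement need not yield a proof.) Your justification that ``$t_i$ and the mgu's term are both unifiers of the same subsystem'' is confused: inside the subproof above $\rho$ the axioms are already strictly dual, so there is no non-trivial subsystem being solved there; the mgu is a property of the \emph{conclusion} sequent, and changing one witness in isolation decouples the two halves of each axiom.

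The paper's cure is exactly to avoid local re-witnessing. It first re-witnesses \emph{both} proofs globally by the full mgu $\sigma$, obtaining $\Pi\sigma$ and $\Pi'\sigma$; Lemma~\ref{lem:mgu-rewitness-well-defined} shows this is a well-formed proof in one shot (all axioms become dual simultaneously, and the $\forall$ side conditions only improve because the mgu uses the fewest universal variables). Now $\Pi\sigma$ and $\Pi'\sigma$ share the same witness assignment, so the induction (Lemma~\ref{lem:same-wit-comm-equiv}) needs only pure commutations: pick the last rule of $\Pi\sigma$, observe its vertex is ready, commute that vertex to the bottom of $\Pi'\sigma$ (Lemma~\ref{lem:ready-final}), and recurse. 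Your inductive scheme is salvageable if you front-load the global re-witnessing this way; trying to interleave local re-witnessing with commutation, as you do, does not work.
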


\subsection{Proof equivalence via commutations and witness replacement}\label{sec:equivalence}

\subsubsection{Witness replacement}\label{sec:witness-replacement}

Let $\Pi$ be a proof of $\Gamma$. Without loss of generality, assume $\Gamma$ is clean. Thus every $\exists$ rule introduces a distinct existential variable.
Let $x$ be an existential variable in $\Pi$ and let $\rho$ be the $\exists$ rule
\[ \displayexistsrule \]
introducing $x$.
The \defn{scope} of $x$ in $\Pi$ is every occurrence of $t$ above $\rho$ which descends to an occurrence of $x$ in $\ex x A$ in the conclusion of $\rho$. (See Section~\ref{sec:ascent-descent} for the definition of descent, via the symbol tracking functions through rules.)
Given a term $u$, define the \defn{witness replacement} $\Pi\substituteto x u$
by replacing every occurrence of $t$ in the scope of $x$ by $u$.
For example, if $\Pi$ is below-left then $\Pi\substituteto x {hzb}$ is below-centre and $\Pi\substituteto x {hzb}\substituteto y {hzb}$ is below-right:
\[
\newcommand\showsub[2]{\hspace{5ex}\raisebox{4ex}{\begin{math}\overset{#1\shortmapsto #2}{\longrightarrow}\end{math}}\hspace{5ex}}
\otherredundancyproof{fc}y
\showsub x {hzb}
\othersplitredundancyproof{hzb}{fc}y
\showsub y {hzb}
\otherredundancyproof{hzb}y
\]
In general, a witness replacement may not be a well-formed proof: in the center example $\Pi\substituteto x {hzb}$ the axiom rule is ill-formed, since it is between non-dual predicates $\pp hzb$ and $Pfc$.
The right example $\Pi\substituteto x {hzb}\substituteto y {hzb}$ is, however, a well-defined cut-free proof, since the axiom is between dual predicates $\pp hzb$ and $P hzb$.

A \defn{re-witnessing}\label{def:re-witnessing} of a cut-free proof $\Pi$ is any proof obtained from $\Pi$ by a sequence of witness replacements. For example, $\Pi\substituteto x {hzb}\substituteto y {hzb}$ above-right is a re-witnessing of $\Pi$ above-left.
If $\sigma=\assignment{\gets{x_1}{t_1},\ldots,\gets{x_n}{t_n}}$ is an assigment of terms to existential variables in $\Pi$, write $\Pi\sigma$ for
re-witnessing by $\sigma$, \ie, $\Pi\sigma$ = $\Pi\substituteto{x_1}{t_1}\ldots\substituteto{x_n}{t_n}$. This is well-defined with respect to the choice of ordering of the $x_i$ because scopes of distinct existential variables do not overlap.

\subsubsection{Proof equivalence definition}

A \defn{rule commutation} is any of the subproof rewrites in {\commfig}Figure~\ref{fig:rule-commutations}, where, for clarity and brevity, passive side formulas are omitted. For example, the $\exists/\tensor$ commutation at the bottom-left of Figure~\ref{fig:rule-commutations} abbreviates
\[ \tecommcontexts \]
where the omitted contexts $\Gamma$ and $\Delta$ flow passively through the rules.

Two cut-free \fomll proofs
are \defn{commutation-equivalent} if one yields the other by a sequence of (zero or more) rule commutations,
and \defn{equivalent} if one yields the other by a sequence of rule commutations and/or re-witnessings.
For example, the two proofs shown at the beginning of Section~\ref{sec:canonicity} are equivalent, but not commutation-equivalent (since re-witnessing is required).

\subsection{Proof of the Canonicity Theorem}\label{sec:canonicity-proof}

We prove Theorem~\ref{thm:canonicity} (page~\pageref{thm:canonicity}), the Canonicity Theorem. The proof follows from a number of auxiliary results below.

Let $\theta$ be a cut-free unification net on $\Gamma$, with graph $\graphof\theta$. A root $v$ of $\Gamma$ is \defn{ready} if any of the following cases hold, which correspond to our ability to write down a final rule introducing $v$ in the proof of Theorem~\ref{thm:surjectivity} (Cut-free surjectivity):
\begin{itemize}
\item $v$ is a $\parr$ or $\forall$;
\item $v$ is an $\exists$ with no outgoing leap in $\graphof\theta$;
\item $v$ is a $\tensor$ which splits $\graphof\theta$.
\end{itemize}
A rule $\rho$ \defn{commutes downwards} if a commutation rewrite (Figure~\ref{fig:commutations}) applies with $\rho$ as the upper rule.
\begin{lemma}\label{lem:ready-commute-once}
  Let $\rho$ be a penultimate logical rule in a cut-free proof\/\, $\Pi$ introducing a vertex $v$. If\/ $\mm2v$ is ready in the unification net of\/\, $\Pi$, then $\rho$ commutes downwards.
\end{lemma}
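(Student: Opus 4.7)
The plan is to proceed by case analysis on the connective of $v$, equivalently on the kind of rule $\rho$. Let $\mu$ denote the last rule of $\Pi$, which sits directly below $\rho$. In each case I would exhibit an applicable commutation rewrite from Figure~\ref{fig:commutations} with $\rho$ as the upper rule, and verify its side condition. The easy cases are $v=\parr$ and $v=\forall$: all commutations with $\parr$ on top carry no side condition, and the commutations with $\forall$ on top carry at most a freshness condition requiring the universal variable bound by $\rho$ not to occur free in the relevant part of the lower rule. This is automatic in a clean proof, since that variable also binds an occurrence in the final conclusion and hence cannot appear free in any other formula of the same sequent.

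The key case is $v=\exists$ with no outgoing leap, say $\rho$ introducing $\ex x A$ with witness $t$. The commutations with $\parr$, $\exists$, and $\tensor$ carry no side condition, so the only delicate subcase is $\mu=\forall y$, where the $\exists/\forall$ commutation demands $y\notin t$. I would argue the contrapositive: if $y\in t$, then every leaf of $\ex x A$ lying at an $x$-position carries $t$ immediately above $\rho$, and tracing its link partner downwards I would follow the $y$-occurrences through any intervening rules. If no intervening rule absorbs the $y$, the link equation for $x$ contains $y$ outright. If an intervening $\exists u$ rule above $\rho$ abstracts a subterm containing $y$ into its existential $u$, the axiom-induced equation at the affected position forces the mgu to assign $u$ a term containing $y$, and transitivity then propagates $y$ into $\sigma(x)$. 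Either way $\sigma(x)$ contains $y$, contradicting the hypothesis of no outgoing leap from $\ex x$.

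The remaining case is $v=\tensor$ that splits. Here $\rho$ is binary, and the commutation requires repartitioning the contexts around $\mu$'s active formula(s). The splitting hypothesis---that $\graphof\theta\setminus\{v\}$ decomposes into two disjoint components $\graph_1$ and $\graph_2$---tells me that every link and every leap is confined to a single component, so variables bound in one side are wholly independent of the other. This independence makes the $\tensor/\tensor$, $\tensor/\parr$, and $\tensor/\exists$ rewrites applicable with $\rho$ on top (with appropriate choice of the symmetric variant depending on which side of $v$ contains $\mu$'s active formula), and in the $\tensor/\forall$ subcase the freshness condition holds because the $\tensor$'s side formula lies in one component while the universal quantifier of $\mu$ lies in the other. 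The main obstacle is the tracking argument in the $\exists$ subcase above, which must carefully follow the flow of the universal $y$ through any existential abstractions that intervene between the axioms of $\Pi$ and the rule $\rho$.
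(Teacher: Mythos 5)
Your overall strategy is the same as the paper's: since $v$ and the root introduced by the final rule are both roots of the conclusion, some rewrite of Figure~\ref{fig:commutations} always matches structurally with $\rho$ on top, so only side conditions can obstruct; the $\forall/\tensor$ condition is killed by cleanness, and the $\exists/\forall$ condition is to be killed by readiness. (Your extra work in the $\tensor$ case is superfluous: when a $\tensor$ moves downwards none of the commutations acquires a new side condition, and structural applicability does not need the splitting hypothesis or any variable-independence argument; this is harmless but not where the content of the lemma lies.)

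The genuine gap is in the $\exists/\forall$ subcase, which is the heart of the lemma. You try to derive ``$y\in t$ implies the mgu assigns $x$ a term containing $y$'' by tracing link partners, but your dichotomy is incomplete and the forcing step is false: once a $y$-containing subterm has been absorbed into an existentially bound variable, the conclusion-level link equations mention only that variable, and the mgu need not assign it any $y$-containing term. In particular you overlook the case where the link partner of an $x$-position leaf lies inside $\ex x A$ itself, so that $\rho$ abstracts \emph{both} ends. Concretely, take the proof of $\;\ex x\,\bigl(Px\parr(\pp x\tensor\qq)\bigr)\com\;\all y\,Q\;$ built from axioms $Px\com\pp x$ and $\qq\com Q$ (instantiated with $t$), a $\tensor$, a $\parr$, an $\exists$ rule with witness $t=y$, and a final (vacuous) $\forall y$ rule: the only nontrivial link joins $Px$ to $\pp x$, the unification problem is just $x\shorteq x$, the mgu assigns $x$ a fresh free variable, and there is no leap $\ex x\toedge\all y$, so $\ex x$ is ready --- yet $y\in t$ and the $\exists/\forall$ commutation is blocked. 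So the implication your tracking argument aims at simply does not hold for an arbitrary choice of witness; the paper's own proof asserts it without argument, and it is sound only when the witness of $x$ agrees with the mgu's assignment (which is how the lemma is eventually used, after re-witnessing by the mgu in the Canonicity proof). To close your argument you would need to add that hypothesis (or restate readiness in terms of the proof's own witness assignment) rather than claim the mgu is forced to contain $y$.
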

\begin{proof}
  Let $w$ be the root introduced by the final rule. Since $v$ is ready, it is also root. Thus $\rho$ will only fail to commute downwards if one of the side conditions of a commutation is not satisfied, of which there are two cases (see Figure~\ref{fig:commutations}): (a) $\forall$ commuting down through $\tensor$, with the $x\not\in C$ side condition, and (b) $\exists$ commuting down through $\forall$, with the $y\not\in t$ side condition. The former case is ruled out by assuming (without loss of generality) that $\Gamma$ is clean, and the latter case is ruled out by observing that if $y\in t$ then $v=\ex x$ would have
a leap $\ex x\toedge \all y$, contradicting the readiness of $v$.
\end{proof}
Let $\Pi$ be a cut-free proof of $\Gamma$ and $v$ a vertex of $\Gamma$. Since \fomll has no contraction or weakening, a unique rule $\rho(v)$ in $\Pi$ introduces $v$.
\begin{lemma}\label{lem:ready-final}
Let $\theta$ be the unification net of a cut-free proof\/ $\Pi$.
If\/ $v$ is a ready vertex in $\theta$, then $\Pi$ is commutation-equivalent to a cut-free proof\/\, $\Pi'$ whose final rule introduces $v$.
\end{lemma}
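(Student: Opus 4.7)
The plan is to bubble $\rho(v)$ downwards through $\Pi$ by iterated rule commutations until it becomes the final rule, proceeding by induction on the number $d$ of rules strictly below $\rho(v)$ in $\Pi$. The base case $d=0$ is immediate: $\rho(v)$ is already the final rule, so take $\Pi'=\Pi$.

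For the inductive step, let $\mu$ be the rule directly below $\rho(v)$. Because $v$ is a root of $\Gamma$ (by readiness), no rule on the descending path from $\rho(v)$ to the conclusion operates on $v$; in particular, the principal formula of $\mu$ is some other root, and the subformula at $v$ appears only as passive context in $\mu$. Hence the pair $(\rho(v),\mu)$ matches one of the rule-commutation patterns in Figure~\ref{fig:commutations}. Exactly as in Lemma~\ref{lem:ready-commute-once}, the two potentially obstructing side conditions are automatically met: the $x\not\in C$ clause for the $\forall/\tensor$ commutation is discharged by cleanness of $\Gamma$ (assumed WLOG), and the $y\not\in t$ clause for the $\exists/\forall$ commutation is discharged by readiness of $v=\ex x$, since $y\in t$ would force the mgu of $\theta$ to assign to $x$ a term containing $y$, producing the forbidden leap $\ex x\toedge\all y$.

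Performing this commutation yields a commutation-equivalent proof $\Pi''$ in which $\rho(v)$ now sits at depth $d-1$. Since commutations preserve axiom links, witness terms, and leaf identities, they preserve the unification net; so $\Pi''$ still has unification net $\theta$ and $v$ is still ready in it. Applying the induction hypothesis to $\Pi''$ then yields the desired $\Pi'$, commutation-equivalent to $\Pi''$ (and hence to $\Pi$) with final rule introducing $v$.

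The main subtlety I expect is that Lemma~\ref{lem:ready-commute-once} is stated with $\rho(v)$ penultimate in the full proof, whereas during the iteration $\rho(v)$ may sit at greater depth. In substance this is not an obstruction: the commutation step is purely local, depending only on the pattern match with the immediately-below rule and on the side conditions, while the argument ruling out side-condition violations invokes only the global readiness of $v$ in $\theta$, which is invariant under commutation. Thus the reasoning of Lemma~\ref{lem:ready-commute-once} transfers to each intermediate step, and one need only be careful to record, at each commutation, that both the unification net and the readiness of $v$ are preserved so the induction hypothesis remains applicable.
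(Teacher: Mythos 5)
Your proposal matches the paper's own proof, which likewise proceeds by induction on the number of rules between $\rho(v)$ and the final rule, iterating Lemma~\ref{lem:ready-commute-once} to bubble $\rho(v)$ downwards one commutation at a time; your explicit observations that commutations preserve the unification net (hence the readiness of $v$) and that the penultimate hypothesis of Lemma~\ref{lem:ready-commute-once} must in effect be relaxed to arbitrary depth are precisely the details the paper leaves implicit. One small caution: at intermediate depth the $x\not\in C$ side condition is not discharged by cleanness of $\Gamma$ alone (the eigenvariable of $\rho(v)$ could in principle recur free in the parallel branch of the $\tensor$ rule and only later be absorbed into $\exists$-witnesses), so, exactly as in the paper's penultimate-case argument, one tacitly also assumes the usual eigenvariable hygiene for $\Pi$.
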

\begin{proof}
  Let $\rho$ be the rule in $\Pi$ which introduces $v$.
  Proceed by induction on the number of rules between $\rho$ and the final rule of $\Pi$,
iterating Lemma~\ref{lem:ready-commute-once}.
\end{proof}
\begin{lemma}\label{lem:mgu-rewitness-well-defined}
Let $\sigma$ be the mgu of the unification net of a cut-free proof\/ $\Pi$\,. The re-witnessing $\Pi\sigma$ is a well-defined cut-free proof.
\end{lemma}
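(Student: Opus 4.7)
The plan is to verify, rule by rule, that $\Pi\sigma$ remains a valid proof. The $\parr$, $\tensor$ and $\exists$ rules have no side conditions and re-witnessing preserves their form (each $\exists y$ rule simply receives the new witness $\sigma(y)$ above its conclusion). So only two potential failures must be ruled out: an axiom of $\Pi\sigma$ might link non-dual predicates, or some $\forall x$ rule of $\Pi\sigma$ might have $x$ free in its context.

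For axioms, recall that $\sigma$ unifies the linking $\theta=\translate\Pi$, so each link between $P(u_1,\ldots,u_n)$ and $\dual P(v_1,\ldots,v_n)$ in $\Gamma$ satisfies $u_i\sigma=v_i\sigma$. Tracing the corresponding axiom of $\Pi$ up from those two leaves through the re-witnessed $\exists$ rules on each path substitutes $\sigma(y)$ for every existential variable $y$ bound on that path, producing axiom predicates $P(u_1\sigma,\ldots,u_n\sigma)$ and $\dual P(v_1\sigma,\ldots,v_n\sigma)$, which are dual by the unifier property.

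For $\forall x$ rules, the crucial observation is that the original witness assignment $\tau\colon y\mapsto t_y$ is itself a unifier of $\theta$, since each axiom of $\Pi$ forces the very equations defining the link's unification problem. Being the mgu, $\sigma$ satisfies $\tau=\rho\circ\sigma$ for some substitution $\rho$ acting only on the free variables of $\sigma$'s codomain; since universal variables behave as constants in the unification, $\rho$ fixes every universal variable, so whenever $\sigma(y)$ contains a universal variable $x$, so does $t_y$. Now if some $\forall x$ rule of $\Pi\sigma$ had $x$ free in its context $\Delta'$, the offending re-witnessing $t_y\mapsto\sigma(y)$ at a $t_y$-occurrence in $\Delta'$ would force $\sigma(y)$, hence $t_y$, to contain $x$, so that $x$ was already free in $\Delta'$ in $\Pi$ itself, contradicting well-formedness of the $\forall x$ rule there. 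The principal subtlety will be the bookkeeping for how re-witnessing propagates through nested scopes, which we handle by working in a clean sequent (bound variables distinct from one another and from free variables), ensuring that scopes are well-defined and substitutions compose unambiguously.
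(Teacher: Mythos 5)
Your proof is correct and follows essentially the same route as the paper's: axioms of $\Pi\sigma$ are well-formed because the mgu equalizes the term sequences of every link, and the $\forall$ side condition cannot newly fail because every universal variable occurring in $\sigma(y)$ already occurs in the original witness of $y$. You merely make explicit what the paper leaves implicit, namely that the original witness assignment is itself a unifier and hence factors through the mgu by a substitution fixing universal variables.
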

\begin{proof}
The unification net correctness criterion ensures that the mgu $\sigma$ equalizes the term sequences in every link. Thus every axiom of $\Pi\sigma$ is well-formed, so $\Pi$ can only fail to be a well-formed proof if one of its $\forall$ rules introducing $\all y$ fails the side condition precluding free occurrences of $y$ in the context.
Since $\sigma$ is an mgu, for every existential variable $x$ the witness of $x$ in $\Pi$ contains more universal variables than the witness assigned by $\sigma$. Thus a $\forall$ rule side condition fails in $\Pi\sigma$ only if it also fails in $\Pi$.
\end{proof}
Let $\Pi$ be cut-free proof. Without loss of generality, its conclusion $\Gamma$ is clean, hence every quantifer rule introduces a distinct bound variable.
Define the \defn{witness assignment}\label{witness-assignment} $\sigma_\Pi$ of $\Pi$ by
setting $\sigma_\Pi(x)$ to be the witness of $x$, for every non-vacuous existential variable of $\Pi$.
\begin{lemma}\label{lem:same-wit-comm-equiv}
  Suppose $\Pi$ and $\Pi'$ are cut-free proofs with the same witness assignment and the same unification net.
  Then $\Pi$ and $\Pi'$ are commutation-equivalent.
\end{lemma}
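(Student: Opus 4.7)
The plan is to induct on the number of rules in $\Pi$. The base case (an axiom) is trivial: the sequent $P t_1\ldots t_n\com\pp t_1\ldots t_n$ admits a unique cut-free proof, so $\Pi=\Pi'$.

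For the induction step, let $\theta$ denote the common unification net on $\Gamma$. The case analysis in the proof of Theorem~\ref{thm:surjectivity} shows that $\theta$ always admits at least one \emph{ready} root vertex $v$ of $\Gamma$ (a $\parr$, a $\forall$, an $\exists$ with no outgoing leap, or a splitting $\tensor$). By Lemma~\ref{lem:ready-final}, both $\Pi$ and $\Pi'$ are commutation-equivalent to cut-free proofs $\tilde\Pi$ and $\tilde\Pi'$ whose respective final rules $\tilde\rho$ and $\tilde\rho'$ introduce $v$. Since rule commutations preserve both the unification net and the witness assignment — inspection of Figure~\ref{fig:rule-commutations} confirms that no commutation alters which rule introduces which variable, or with which witness — the pair $\tilde\Pi,\tilde\Pi'$ still satisfies the hypotheses of the lemma.

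Next I would check that $\tilde\rho=\tilde\rho'$ and peel it off, appealing to induction on the subproofs. When $v$ is a $\parr$ or $\forall$, the hypothesis sequent is uniquely determined by $v$ and $\Gamma$. When $v=\ex x$ with no outgoing leap, if $x$ is non-vacuous its witness in $\tilde\rho$ is $\sigma_{\tilde\Pi}(x)=\sigma_{\tilde\Pi'}(x)$ by the shared witness assignment, and if $x$ is vacuous there is no witness to choose; either way $\tilde\rho=\tilde\rho'$. When $v$ is a splitting $\tensor$, the partition $\Gamma=\Gamma_1\com\Gamma_2$ is intrinsic to $\graphof\theta$ (namely the two connected components of $\graphof\theta$ after deleting $v$ and its two incoming edges), so $\tilde\rho$ and $\tilde\rho'$ must split $\Gamma$ identically. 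In every case, removing $\tilde\rho$ yields one or two pairs of strictly smaller cut-free subproofs whose unification nets and witness assignments are the restrictions of the originals, exactly as in the corresponding cases of Theorem~\ref{thm:surjectivity}. Applying the inductive hypothesis to each pair and re-appending $\tilde\rho$ shows $\tilde\Pi$ is commutation-equivalent to $\tilde\Pi'$, hence so are $\Pi$ and $\Pi'$.

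The main obstacle is verifying the two structural invariants glossed above: that rule commutations preserve both the unification net and the witness assignment, and that in the splitting-$\tensor$ case the partition $\Gamma=\Gamma_1\com\Gamma_2$ depends only on $\theta$ (not on the proof). Both are essentially by inspection — of Figure~\ref{fig:rule-commutations} for the former and of the graph-theoretic definition of splitting for the latter — but they are precisely what makes the induction close cleanly and forces $\tilde\rho$ and $\tilde\rho'$ to coincide.
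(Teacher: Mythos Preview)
Your proposal is correct and follows essentially the same inductive argument as the paper. The one minor difference is that the paper takes $v$ to be the vertex introduced by the \emph{last rule of $\Pi$ itself} (which is automatically ready), so only $\Pi'$ needs to be commuted; your version instead locates a ready vertex abstractly via the surjectivity case analysis and commutes both proofs, which is slightly less economical but equally valid.
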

\begin{proof}
  Let $\theta$ be the unification net of $\Pi$ and $\Pi'$.
  Let $\rho$ be the last rule of $\Pi$, introducing the vertex $v$. Since $\rho$ is the last rule of $\Pi$, $v$ is ready in $\theta$.
By Lemma~\ref{lem:ready-final}, $\Pi'$ is commutation-equivalent to $\Pi''$ whose final rule $\rho''$ introduces $v$. Since commutations do not change witnesses, $\Pi$ and $\Pi''$ have the same witness assignment. Thus $\rho$ and $\rho''$ are the same rule instance: if they are not $\exists$ rules, this is immediate; otherwise the equality of witness assignment ensures that as $\exists$ rules $\rho$ and $\rho''$ introduce the same witness. Appeal to induction with the subproofs above $\rho$ and $\rho''$.
\end{proof}
\begin{proofof}{Theorem~\ref{thm:canonicity} (Canonicity)}
  Let $\Pi$ and $\Pi'$ be cut-free proofs with the same unification net, whose mgu is $\sigma$.
  By Lemma~\ref{lem:mgu-rewitness-well-defined} the re-witnessings $\Pi\sigma$ and $\Pi'\sigma$ are well-defined cut-free proofs, which are commutation-equivalent by Lemma~\ref{lem:same-wit-comm-equiv} because they have same witness assigment, $\sigma$. Thus $\Pi$ and $\Pi'$ are equivalent modulo rule commutations and re-witnessings.
\end{proofof}

\section{Unification nets with cuts}\label{sec:cut}\label{sec:unets-with-cut}

Extending unification nets with cuts comes essentially
for free, as in the propositional case \cite{Gir87} where one
treats a cut as a tensor (see \eg\ \cite{HG03}):\footnote{While the
  definition comes for free, proving that cut elimination is
  well-defined requires work, as in the propositional case.}
\begin{equation*}
  \cut{\,A\,}{\:\AA\:} \h4 \approx \h4 A\tensor \AA
\end{equation*}
For quantifiers one must generalize slightly, to an existentially closed tensor:
\begin{equation}\label{cutex}
  \h4\cut{\,A\,}{\:\AA\:} \h4 \approx \h4 \ex {\vec x} (A\tensor \AA)
\end{equation}
where $\ex{\vec x}=\ex{x_1}\ldots\ex{x_n}$ for $x_1\ldots x_n$
the free variables in $A$.
Appendix~\ref{sec:cutex-intuition} provides motivation and
intuition for (\ref{cutex}) from a proof-theoretic perspective.
The following definitions derive automatically from the cut-free
definitions (Section~\ref{sec:cut-free-unets}) by thinking of a cut as
an existentially closed tensor.

A \defn{cut} $\cut{A}{\AA}$ is a disjoint union of dual formulas $A$
and $\AA$, the \defn{cut formulas},
with an undirected edge between their roots, a \defn{cut edge}.
A \defn{cut sequent} is a disjoint union of a sequent and zero or more cuts.
Let $\Delta$ be a cut sequent.
A \defn{link} on $\Delta$ is a pair $\{l,\m2\ll\}$ of dual leaves in
$\Delta$.
A \defn{linking} on $\Delta$ is a set of disjoint links whose union
contains every leaf of $\Delta$.
The lower half of Figure~\ref{fig:cut-elim-comparison} (page~\pageref{fig:cut-elim-comparison}) shows two linkings on cut
sequents: the first linking on
\[\all x\pp fx\,\com\; \cut{\ex y Py\,}{\,\all y\pp y}\,\com\; \ex z(Pz\tensor\m1(\qq z\m1\parr Qz))\]
and the second on
\[\all x\pp fx\,\com\; \cut{Py\,}{\,\pp y}\,\com\; \ex z(Pz\tensor\m1(\qq z\m1\parr Qz))\]
(The sequent of the third linking is trivially a cut sequent, with zero cuts.)

\vspace{2pt}%
We consider every free variable of $A$ (hence also $\AA$) to be bound
in the cut $\cut{A}{\AA}$.
Such bound variables are the \defn{cut variables} of $\cut{A}{\AA}$.
Their renaming is analogous to renaming of existential or universal
variables.
For example, the following two cut sequents are equivalent up to
renaming of bound variables:
\begin{center}\begin{math}
\all x Px,\, \;\cut{Qx}{\qq x},\, \;\cut{Rx}{\rr x}
\hspace{26ex}
\all x Px,\, \;\cut{Qy}{\qq y},\, \;\cut{Rz}{\rr z}
\end{math}\end{center}
This is akin to the renaming of bound variables in the cut-free
sequent below-left to yield the cut-free sequent below-right:
\begin{center}\begin{math}
\all x Px,\, \;\ex x (Qx\tensor \qq x),\, \;\ex x(Rx\tensor \rr x)
\hspace{14ex}
\all x Px,\, \;\ex y (Qy\tensor \qq y),\, \;\ex z(Rz\tensor \rr z)
\end{math}\end{center}
The (cut-free) \defn{encoding} of a cut $\cut{A}{\AA}$ is the
existentially closed tensor $\ex{\vec x}(A\tensor\AA)$ where $\ex{\vec
  x}$ denotes $\ex{x_1}\ldots\ex{x_n}$ for $x_1\ldots x_n$ the free
variables in $A$.\footnote{For definiteness, we assume a fixed order
  of the $x_i$.  The choice of this order is arbitrary.}
For technical convenience, and without loss of generality, we assume
the leaves of the encoding are identical to the leaves of the cut.
(For example, if $\cut{Px}{\pp x}$ is the cut whose leaves are $l$ and
$l'$, labelled $Px$ and $\pp x$, respectively, then the encoding is
$\ex x(Px\tensor \pp x)$ with the same leaves $l$ and $l'$, still
labelled $Px$ and $\pp x$, respectively.)
The \defn{encoding} of a cut sequent $\Delta$ is the
sequent $\encode{\Delta}$ obtained by replacing each cut by its
encoding.

Let $\theta$ be a linking on a cut sequent $\Delta$.
By our assumption that the leaves remain unchanged by encoding,
$\theta$ also constitutes a (cut-free) linking on
$\encode{\Delta}$.
The linking $\theta$ on $\Delta$ is \defn{correct} if $\theta$ is
correct (in the cut-free sense of Section~\ref{sec:correctness}) on $\encode{\Delta}$.
A \defn{unification net} (or \defn{unet} for short) on a cut sequent $\Delta$ is a correct
linking on $\Delta$.

\begin{theorem}[Quadratic-time correctness]\mbox{}\label{thm:cut-quadratic}\\
The correctness of a unification net can be verified in quadratic time.
\end{theorem}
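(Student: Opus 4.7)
The plan is to reduce directly to the cut-free case (Theorem~\ref{thm:quadratic}) via the encoding $\encode{\Delta}$ already introduced in the definition of correctness for cut sequents. By definition, a linking $\theta$ on a cut sequent $\Delta$ is correct precisely when $\theta$ is correct (in the cut-free sense) on $\encode{\Delta}$, and the encoding preserves the set of leaves, hence $\theta$ carries over verbatim. So correctness for unification nets with cuts is literally correctness for a cut-free unification net on a different sequent.

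First, I would show that $\encode{\Delta}$ can be constructed from $\Delta$ in linear time and is only linearly larger than $\Delta$. Each cut $\cut{A}{\AA}$ is replaced by $\ex{\vec x}(A\tensor\AA)$, which adds one $\tensor$ node, one node per free variable of $A$ (for the prefix $\ex{x_1}\ldots\ex{x_n}$), and re-uses the existing formula trees for $A$ and $\AA$ unchanged. The number of added quantifier nodes, summed over all cuts, is bounded by the total number of free-variable occurrences in cut formulas, itself bounded by the size of $\Delta$; the number of added tensors is the number of cuts. Thus $|\encode{\Delta}| = O(|\Delta|)$ and the construction is linear-time.

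Second, I would apply Theorem~\ref{thm:quadratic} to the cut-free linking $\theta$ on $\encode{\Delta}$. That theorem delivers a quadratic-time procedure, which on input of size $|\encode{\Delta}| = O(|\Delta|)$ runs in time $O(|\Delta|^2)$. Composing with the linear-time construction of $\encode{\Delta}$, the total running time is still $O(|\Delta|^2)$, i.e.\ quadratic in the size of the unification net on the cut sequent.

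I expect no real obstacle: everything rides on the observation that encoding blows up size only linearly, so the quadratic bound propagates. The only point that warrants a sentence of care is that the $\ex{\vec x}$ prefix introduces at most $O(|\Delta|)$ new quantifier vertices in total (summed across cuts), since each contributes one vertex per free variable occurring in the corresponding cut formula, and these occurrences are already part of $\Delta$. Once this is noted, the theorem follows immediately from Theorem~\ref{thm:quadratic} applied to $\encode\Delta$.
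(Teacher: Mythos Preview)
Your proposal is correct and matches the paper's approach exactly: the paper's proof is a single sentence observing that the cut-free encoding is linear time, so Theorem~\ref{thm:quadratic} carries over. Your elaboration on why $|\encode{\Delta}| = O(|\Delta|)$ is a reasonable expansion of that sentence.
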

\begin{proof}
  The cut-free case (Theorem~\ref{thm:quadratic}) carries over, since cut-free
  encoding is linear time.
\end{proof}

\subsection{Cuts beyond Girard's}

Our definition of cut is more general than Girard's. For example, consider the two unification nets below:
\twolinkings
The former has an analogue in Girard's setting, with four conclusions ($Pfx$, $\pp fx$, $Pfx$ and $\ex z\pp z$).
The latter unification net, slightly more compact with $y$ in place of $fx$ in the cut, has no analogue.

\subsection{Cut elimination}

A \defn{cut reduction} on a unification net is a subgraph rewrite of
any of the following forms:
\begin{center}
\(
\begin{array}{c@{\h{10}}c@{\h{10}}c}
\renewcommand{\gap}{\h3}\begin{array}{c}
\Rnode a {\pp} \vec s \rule{0ex}{4ex}
\gap
\Rnode b P \vec t
\gap
\Rnode c {\pp} \vec t
\gap
\Rnode d P \vec u
\link a b
\link c d
\cutlink b c
\\[1ex]
\elim{4}{\text{atomic}}
\\[5ex]
\Rnode a {\pp} \vec s
\gap
\phantom{\Rnode b P \vec t
\gap
\Rnode c {\pp} y}
\gap
\Rnode d P \vec u
\link a d
\end{array}
&
\begin{array}{c}
\AA\mm2\Rnode x {{}\parr{}}\mm2 \BB
\rule{0ex}{4ex}
\h5
A\mm2\Rnode y {{}\tensor{}}\mm2 B
\cutlink x y
\\[1ex]
\elim{4}{\text{multiplicative}}
\\[5ex]
\Rnode a \AA
\phantom{{}\mm2\tensor\mm2{}}
\Rnode b \BB
\h5
\Rnode x A
\phantom{{}\mm2\parr\mm2{}}
\Rnode y B
\cutlink a x
\cutlinkarm b y {12pt}
\end{array}
&
\begin{array}{c}
\Rnode x {\ex x \AA}
\rule{0ex}{4ex}
\h5
\Rnode y {\all x A}
\cutlink x y
\\[1ex]
\elim{4}{\text{quantifier}}
\\[5ex]
\phantom{\ex x{}}\Rnode x \AA
\h5
\phantom{\all x{}}\Rnode y A
\cutlink x y
\end{array}
\end{array}\)\v3\end{center}
Here $\vec t$ denotes any sequence of terms.
We refer to the upper subgraphs as \defn{redexes}.
\begin{theorem}\label{thm:cut-elim}
  Reducing a cut from a unification net yields a unification net.
\end{theorem}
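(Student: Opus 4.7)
The plan is to prove this by case analysis on the three cut reductions, verifying in each case that the reduct is still a linking, remains unifiable, and has all switchings being trees. It is convenient to reason via the cut encoding $\cut{A}{\AA} \approx \ex{\vec x}(A \tensor \AA)$, since correctness is defined on the encoded sequent and the multiplicative reduction then mirrors a standard MLL $\tensor/\parr$ cut reduction.

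Writing $\theta$ for the redex and $\theta'$ for the reduct, I would argue as follows. In the \textbf{atomic} case, the mgu $\sigma$ of $\theta$ satisfies $\vec s\sigma = \vec t\sigma$ via one link and $\vec t\sigma = \vec u\sigma$ via the other, so by transitivity $\sigma$ unifies the new direct link between $\pp\vec s$ and $P\vec u$; all other links are unchanged. The new link short-circuits a path $\pp\vec s$--$P\vec t$--$[\text{cut}]$--$\pp\vec t$--$P\vec u$ in the redex graph, so any non-tree switching of $\theta'$ would lift through this path to a non-tree switching of $\theta$, contradicting correctness of the redex. In the \textbf{multiplicative} case, neither the leaves nor their term labels change, so the mgu literally survives; the switching correspondence is the standard MLL one, obtained by choosing the unique incoming edge to the now-deleted $\parr$. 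In the \textbf{quantifier} case, the variable $x$ formerly bound by the deleted $\ex x$ and $\all x$ becomes a cut variable of the new cut (existential in the encoding); the redex mgu's $x$-assignment transfers directly to the new cut variable, leaps incident to either deleted vertex vanish harmlessly since those vertices no longer exist, and any non-tree switching of $\theta'$ lifts to one of $\theta$ by reinserting the $\ex x$ and $\all x$ vertices directly above the roots of $\AA$ and $A$, each with its single incoming edge.

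I expect the quantifier case to be the main obstacle, since it is the only one where the unification problem itself genuinely changes: the variable $x$ moves from being universally bound (a constant for unification) in the redex to being a cut variable (an assignable existential) in the reduct. The cleanest route is probably to observe that every unifier of the redex is still a unifier of the reduct (the reduct has strictly fewer constants), and dually that the reduct's mgu, read back through the encoding, provides enough information to recover the original precedences, so the leap structure transfers faithfully in both directions. With unifiability and the switching correspondence both in hand for each case, correctness is preserved and $\theta'$ is again a unification net.
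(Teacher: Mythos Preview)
Your treatment of the atomic and multiplicative cases matches the paper's: transitivity of the mgu for the new link plus rerouting through the encoded $\tensor$ handles the atomic case, and the multiplicative case is standard MLL since the mgu, precedences and leaps are literally unchanged. The paper also uses the counting observation (links minus $\tensor$s-and-cuts is preserved) to reduce ``every switching is a tree'' to ``no switching cycle'', which you implicitly rely on.

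The quantifier case, however, has a genuine gap. Your argument is that leaps incident to the deleted $\ex x$ and $\all x$ ``vanish harmlessly'' and that any non-tree switching of $\theta'$ lifts to one of $\theta$ by reinserting those two vertices. This misses the crucial phenomenon: the reduct can acquire \emph{new leaps between vertices that already existed}. Concretely, write the redex encoding (after renaming the universal side) as containing $\ex x\,\AA$ and $\all{\dot x}\,\dot A$, with mgu assigning $x\mapsto t$ and $z_i\mapsto t_i$. In the reduct, $\dot x$ is gone and $x$ becomes a cut variable; the reduct mgu assigns $z_i\mapsto t_i[\dot x\mapsto t]$. If $t_i$ contained $\dot x$ and $t$ contains some universal variable $y$, then the reduct has a precedence $\exists z_i\toedge\forall y$ that was \emph{not} present in the redex. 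A switching cycle in $\graph(\theta')$ may use such a leap, and there is nothing to ``reinsert'' to recover it in $\graph(\theta)$: the edge simply does not exist there.

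The paper isolates exactly this difficulty as a separate lemma (the quantifier-reduction lemma) and proves it by case analysis on how many new leaps the putative switching cycle uses (zero, one, or at least two). For each new leap $\exists z_i\toedge\forall y$ one shows that the redex had both $\exists z_i\toedge\forall\dot x$ (since $\dot x\in t_i$) and $\exists x\toedge\forall y$ (since $y\in t$), and then reroutes the cycle through $\forall\dot x$, the encoded $\tensor$, and $\exists x$. The two-or-more case needs a further split on whether the new leaps are oriented the same way or oppositely around the cycle. Your final paragraph gestures at ``the leap structure transfers faithfully in both directions'', but that is precisely the non-trivial content: the transfer is not edge-for-edge, and establishing it requires this rerouting argument.
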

To prove this theorem we shall require auxiliary definitions and a key lemma concerning the reduction of a quantifier cut.

A \defn{cycle} in the graph of a linking is a subgraph $\swcc$ with vertex set $\{v_1,\ldots,v_n\}$ for $n\m3\ge\m3 2$, all $v_i$ distinct, and an edge (directed or undirected) between $v_i$ and $v_{i+1}$ for all $i$ (mod $n$),
such that if $n\m4=\m4 2$ then $\swcc$ contains two distinct edges\footnote{This can arise if there is a leap $\ex x\toedge \all y$ with $\all y$ the argument of $\ex x$.} between $v_1$ and $v_2$; $\swcc$ is a \defn{switching cycle} (\cf\ \cite{HG03}) if it contains at most one directed edge into any $\mm2\parr\mm2$ or $\forall$ vertex.
\begin{lemma}\label{lem:quantifier-reduction}
  Let $\theta$ be a unification net on
\:$\Gamma,\,\Rnode a {\ex x \AA\,}\:\Rnode e{\,\all x A}\ncbar[arm=2pt,nodesep=1pt,angle=-90] a e$\:
  and let the linking $\thetaprime$ on
\:$\Gamma,\,\Rnode a \AA\;\Rnode A A\ncbar[arm=2pt,nodesep=1pt,angle=-90] a A$\:
  be the result of reducing the distinguished quantifier cut. Then the graph of $\thetaprime$ has no switching cycle.
\end{lemma}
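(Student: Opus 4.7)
The plan is to argue by contradiction: assume $\mathcal{C}'$ is a switching cycle in $\graphof{\thetaprime}$, and construct a switching cycle in $\graphof{\theta}$, contradicting the fact that every switching of the unification net $\theta$ is a tree.

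First I pin down the local structural difference between the two graphs using the encoding of cuts as existentially-closed tensors. The encoded conclusion of $\theta$ contains the substructure $\ex{\vec y}\,(\ex x\,\AA \tensor \all x\,A)$, while that of $\thetaprime$---where the originally bound $x$ becomes a cut-variable---contains $\ex{\vec y}\,\ex x\,(\AA \tensor A)$. Hence $\graphof\theta$ and $\graphof{\thetaprime}$ agree on the $\Gamma$-, $\AA$- and $A$-subtrees, differing only near the cut-tensor: in $\theta$ the cut-tensor has internal children $\ex x$ (above the $\AA$-root) and $\all x$ (above the $A$-root), whereas in $\thetaprime$ the $\ex x$ sits directly above the cut-tensor, whose two children are the $\AA$- and $A$-roots themselves, and $\all x$ is deleted. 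Furthermore the mgu of $\thetaprime$ is obtained from that of $\theta$ by substituting the new existential $x$ for the old universal $x$ throughout, so every leap of $\thetaprime$ matches a leap of $\theta$ between the same pair of vertices; the only lost leaps are those into $\all x$.

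Second I translate $\mathcal{C}'$ to $\graphof\theta$ by local graph rewriting near the cut: a cycle-edge from the cut-tensor to the $\AA$-root expands into the two-edge path through $\ex x$; a cycle-edge from the cut-tensor to the $A$-root expands into the path through $\all x$; the edge from $\ex x$ to the cut-tensor is contracted; the edge from $\ex x$ to its parent in $\thetaprime$ becomes an edge from the cut-tensor to the same parent in $\theta$; and each leap at $\ex x$ transfers unchanged. A short case analysis on how $\mathcal{C}'$ uses $\ex x$ and the cut-tensor shows the translation yields either a single cycle or a pair of vertex-disjoint cycles in $\graphof\theta$. The switching-cycle property survives because the only newly visited $\forall$ in $\theta$, namely $\all x$, receives at most one incoming cycle-edge---the tree edge from the $A$-root---since $\thetaprime$ has no leaps into $\all x$ to translate; every other $\parr$ and $\forall$ vertex inherits its constraint from $\mathcal{C}'$.

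The hard part will be the non-adjacent sub-case, where $\mathcal{C}'$ visits both $\ex x$ and the cut-tensor but not via the edge between them. In $\thetaprime$ these two vertices form a single topological block that the cycle wraps around; in $\theta$ that block unfolds into two separated regions joined only at the cut-tensor, so the single cycle $\mathcal{C}'$ pinches apart into two vertex-disjoint loops in $\graphof\theta$: one routed through the cut-tensor, $\all x$ and the $A$-root side of $\mathcal{C}'$, the other routed through $\ex x$, its leap, and the $\AA$-root side. I will verify that each loop independently satisfies the switching-cycle condition; either one suffices to close the contradiction.
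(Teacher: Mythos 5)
Your argument hinges on the claim that the mgu of $\thetaprime$ is the mgu of $\theta$ with the old universal $x$ renamed to the new existential $x$, so that every leap of $\thetaprime$ already occurs in $\graph(\theta)$ and the only change is the loss of leaps into $\all x$. That claim is false, and it conceals the entire difficulty of the lemma. Reducing the quantifier cut changes the unification problem: in $\theta$ the variable bound by $\all x$ is treated as a constant, whereas in $\thetaprime$ it is identified with the existential variable $x$ of the cut, to which the mgu of $\theta$ assigns some term $t$. Consequently an assignment $\gets{z_i}{t_i}$ of the old mgu whose term $t_i$ contains the universal $x$ must become $\gets{z_i}{t_i\substituteto{\dot x}{t}}$ (writing $\dot x$ for the renamed universal) in any unifier of $\thetaprime$, and this new term contains every universal variable of $t$. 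Hence $\graph(\thetaprime)$ may contain leaps $\ex{z_i}\toedge\all y$, for $y$ occurring in $t$, that are simply absent from $\graph(\theta)$; a switching cycle of $\thetaprime$ can travel along such a leap, and your edge-by-edge translation back into $\graph(\theta)$ has nothing to map it to. The purely topological part you do treat carefully (re-routing through the cut tensor, the fact that $\all x$ receives at most one cycle edge, the possible pinching into two loops) is only the easy case in which the cycle uses no new leap.

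The paper's proof is organized around precisely this phenomenon. It first observes that $t$ cannot contain $\dot x$, since otherwise the precedence $\dep x{\dot x}$ together with the tensor of the encoded cut would already give a switching cycle in $\graph(\theta)$; this makes $\unifier{\gets{z_1}{t_1\substituteto{\dot x}{t}},\ldots,\gets{z_k}{t_k\substituteto{\dot x}{t}},\gets x t}$ a well-defined unifier of $\thetaprime$ (note that your proposal never actually establishes unifiability of $\thetaprime$ except via the erroneous renaming claim). It then splits on how many leaps of the hypothesized cycle are new, and factorizes each new leap $\ex{z_i}\toedge\all y$ into two leaps that do live in $\graph(\theta)$, namely $\ex{z_i}\toedge\all{\dot x}$ (because $\dot x$ occurs in $t_i$) and $\ex x\toedge\all y$ (because $y$ occurs in $t$), stitching these together through the cut tensor to rebuild a switching cycle of $\theta$; the case of two or more new leaps requires a further analysis of their orientation around the cycle. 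This factorization step is the key idea missing from your proposal, and without it the contradiction cannot be reached.
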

\begin{proof}
  The respective cut-free encodings are
  $$\encode{\Gamma},\,\h1\exists{x_1}\ldots\ex{x_n}(\ex x \AA\tensor \all x A)
  \h{12}
  \encode{\Gamma},\,\h1\exists{x_1}\ldots\ex{x_n}\ex x(\AA\tensor A)$$
  where the additional $\exists x$ in the latter is because,
  without loss of generality, $x$ is free
  in $A$: when $x$ is not free in $A$, the result is trivial since
  the quantifiers $\exists x$ and $\forall x$ in the redex cut \,$\Rnode a
  {\ex x \AA\,}\:\Rnode e{\,\all x A}\ncbar[arm=2pt,nodesep=1pt,angle=-90]
  a e$\, are vacuous, hence topologicaly inert.
  \newcommand{\xo}{\dot x}%
  \newcommand{\Ao}{\dot A}%
  To avoid bound variable conflict, rename $\forall x$ to $\forall\xo$:
  $$\encode{\Gamma},\,\h1\exists{x_1}\ldots\ex{x_n}(\ex x\AA\tensor \all\xo\Ao)$$
  where $\Ao$ is the result of substituting $\xo$ for $x$ in $A$.

  Let $\sigma$ be a unifier for $\theta$. Thus $\sigma=\openU \gets{z_1}{t_1},\ldots,\gets{z_k}{t_k},\gets x t\closeU$, where the $z_i$
  include the $x_j$.
  The term $t$ assigned to $x$ cannot contain $\xo$, or there
  would be a switching cycle due to the resulting precedence $\dep x
  \xo$, via the $\tensor$ of the encoding of $\cut{\ex x \AA}{\all x A}$\,:
  $$
    \psset{arrows=->,nodesepA=3pt,nodesepB=2pt}
    \rule{0ex}{5ex}
    \begin{array}{ccc}
      \Rnode x{\exists x} & & \Rnode X{\forall\xo} \\[2.5ex]
      & \Rnode t \tensor
    \end{array}
    \nccurve[angleA=50,angleB=140,nodesepA=1pt] x X
    \ncline x t
    \ncline X t
  $$
  Let $t'_i$ be the result of substituting $t$ for $\xo$ in
  $t_i$.
  Define $\sigmaprime=\openU \gets{z_1}{t'_1},\ldots,\gets{z_k}{t'_k},\gets x t\closeU$.  This is a well-defined unifier for $\thetaprime$ since none
  of the $t'_i$ contains $\xo$ (because $t$ did not contain
  $\xo$).
  Without loss of generality, $\sigmaprime$ is an mgu.

  We must prove that $\graph(\thetaprime)$ has no switching cycle.
  Suppose $\swc$ was such.  We consider three subcases,
  according to whether there are zero, one or two (or more) leaps in
  $\swc$ which are not in $\graph(\theta)$.  Let $r$ and $\dual r$ be
  the root vertices of $A$ and $\AA$, respectively, and assume that
  $\Ao$ has the same vertices as $A$.  We assume $\graph(\theta)$
  and $\graph(\thetaprime)$ have the same vertices, except for the necessary difference
  around the tensors of the encodings of the two cuts:
  $$
    \psset{arrows=->,nodesepA=3pt,nodesepB=2pt}
    \newcommand{\rowdelta}{2.5ex}
    \begin{array}{ccc}
      \Rnode r r    & &    \Rnode R{\dual r}  \\[\rowdelta]
      \Rnode x{\exists x}
         & &
     \Rnode X{\forall\xo}
     \\[\rowdelta]
      & \Rnode t \tensor
    \end{array}
    \ncline r x
    \ncline R X
    \ncline X t
    \ncline x t
    \h6
    \text{in }\graph(\theta)
    \h{20}
    \begin{array}{ccc}
      \Rnode r r    & &    \Rnode R{\dual r}  \\[\rowdelta]
      & \Rnode t \tensor \\[\rowdelta]
      & \Rnode x {\exists x}
    \end{array}
    \ncline r t
    \ncline R t
    \ncline t x
    \h4
    \text{in }\graph(\thetaprime)
    \h2
  $$
  For technical convenience we shall assume the vertex of $\exists x$ is
  the same in each case.

  Case: every leap of $\swc$ is in $\graph(\theta)$. Define a
  switching cycle in $\graph(\theta)$ from $\swc$ by, if necessary, re-routing a traversal
  of the tensor of the encoding of $\cut A\AA$ to the tensor of the encoding
  of $\cut{\all\xo\Ao}{\ex x A}$.

  Case: $\swc$ contains a single leap $\exists z_i\toedge\forall y$ which
  does not occur in $\graph(\theta)$. (The leap must be from
  an $\exists{z_i}$ since both $\sigma$ and $\sigmaprime$ assign $\gets x t$.)
  This is depicted below-left, where the dashed line represents one or
  more edges in $\swc$.
  \begin{center}\v3
    \(\psset{nodesep=2pt}
      \Rnode z {\exists z_i}
        \h5
      \Rnode y {\forall y}
      \nccurve[linestyle=dashed,angleA=-110,angleB=-60] z y
      \nccurve[angleA=40,angleB=150,arrows=->,nodesepA=0pt,nodesepB=2pt] z y
      \h{20}
      \Rnode z {\exists z_i}
        \h5
      \Rnode y {\forall y}
      \nccurve[linestyle=dashed,angleA=-110,angleB=-60] z y
      \h{12}
      \Rnode a {\exists x}
        \h3
      \rput(0,-.3){\Rnode t \tensor}
        \h{3.5}
      \Rnode b {\forall\xo}
      \nccurve[angleA=25,angleB=155,arrows=->,nodesepB=2pt,nodesepA=0pt] z b
      \ncline[arrows=->,nodesepA=0pt] b t
      \ncline[arrows=->,nodesepA=2pt] a t
      \nccurve[angleB=25,angleA=155,arrows=->,nodesepB=0pt] a y
      \raisebox{-3ex}{}
    \)
  \end{center}
  The leap $\exists{z_i}\toedge\forall y$ came from a precedence
  present in $\thetaprime$ but not in $\theta$.
  Such an additional precedence can arise only from the construction
  of $t_i'$ by substituting $t$ for $\xo$ in $t_i$, hence $y$
  must be in $t$, so $\dep x y$ is a precedence of
  $\theta$ (since $\gets x t$ in $\sigma$), with a corresponding leap
  $\exists x\toedge\forall y$ in
  $\graph(\theta)$.
  Since $t_i$ contains $\xo$, there is a precedence $\dep
  {x_i}{\xo}$, hence a leap $\exists z_i\toedge \forall\xo$ in
  $\graph(\theta)$.
  Thus we can construct a switching cycle in $\graph(\theta)$ as
  above-right.

  Case: there are two or more leaps  in
  $\swc\subseteq\graph(\thetaprime)$
  which are not present in $\graph(\theta)$, say (without loss of
  generality) $\exists z_1\toedge\forall y_1$ and
  $\exists z_2\toedge\forall y_2$.
  Either (a) the leaps are in the same direction around $\swc$, as
  shown below-left, or (b) they are in opposite directions, as
  below-right.
  \begin{center}\v1
    \(\psset{nodesep=2pt}
      \Rnode{v1} \exists z_1\h5 \Rnode {y1} {\forall y_1}
      \h9
      \Rnode{v2} \exists z_2\h5 \Rnode {y2} {\forall y_2}
      \nccurve[linestyle=dashed,angleA=-130,angleB=-40] {v1} {y2}
      \nccurve[linestyle=dashed,angleA=-20,angleB=-160] {y1} {v2}
      \nccurve[angleA=40,angleB=150,arrows=->,nodesepB=2pt]{v1}{y1}
      \nccurve[angleA=40,angleB=150,arrows=->,nodesepB=2pt]{v2}{y2}
      \h{12}
      \Rnode{v1} \exists z_1\h5 \Rnode {y1} {\forall y_1}
      \h9
      \Rnode {y2} {\forall y_2}
      \h5
      \Rnode{v2} \exists z_2
      \nccurve[linestyle=dashed,angleA=-140,angleB=-40,nodesepB=6pt] {v1} {v2}
      \nccurve[linestyle=dashed,angleA=-20,angleB=-160] {y1} {y2}
      \nccurve[angleA=40,angleB=150,arrows=->,nodesepB=2pt]{v1}{y1}
      \nccurve[angleA=140,angleB=30,arrows=->,nodesepB=0pt]{v2}{y2}
      \raisebox{-6ex}{}
    \)
  \end{center}
  Reasoning for each $\exists z_i$ and $\forall y_i$ as in the previous
  subcase for $\exists z_i$ and $\forall y$, we have leaps $\exists x\toedge
  \forall y_i$ and $\exists z_i\toedge\forall x$.  Thus, in
  $\graph(\theta)$, if (a), we can construct the switching cycle
  below-left,
  and if (b), the switching cycle below-right.
  \begin{center}\vv{.5}
    \(\psset{nodesep=2pt}
      \phantom{\Rnode{v1} \exists z_1\h5 }\Rnode {y1} {\forall y_1}
      \h9
      \Rnode{v2} \exists z_2\phantom{\h5 \Rnode {y2} {\forall y_2}}
      \below{\h4\Rnode{ex}{\exists x}}{y1}{5ex}
      \putright{\Rnode{fx}{\forall\xo}}{ex}{6ex}
      \nccurve[linestyle=dashed,angleA=-20,angleB=-160] {y1} {v2}
      \ncline[linestyle=none]{ex}{fx}\nbput[labelsep=3ex]{\Rnode t \tensor}
      \ncline[arrows=->]{v2}{fx}
      \ncline[arrows=->]{ex}{y1}
      \ncline[arrows=->,nodesepB=.5pt]{fx}{t}
      \ncline[arrows=->,nodesepB=.5pt]{ex}{t}
      \h{12}
      \phantom{\Rnode{v1} \exists z_1\h5} \Rnode {y1} {\forall y_1}
      \h9
      \Rnode {y2} {\forall y_2}
      \phantom{\h5 \Rnode{v2} \exists z_2}
      \nccurve[linestyle=dashed,angleA=-20,angleB=-160] {y1} {y2}
      \nbput[labelsep=4.5ex]{\Rnode{x}{\exists x}}
      \ncline[arrows=->]{x}{y1}
      \ncline[arrows=->]{x}{y2}
      \raisebox{-7ex}{}
    \)
  \end{center}
\end{proof}
With Lemma~\ref{lem:quantifier-reduction} in hand, we can now prove that reducing a cut from a unification net yields a unification net (Theorem~\ref{thm:cut-elim}).
\begin{proofof}{Theorem~\ref{thm:cut-elim}}
  Each of the three reductions preserves the difference between the
  number of links and the number of $\tensor\mm2$s and cuts,
  thus (see \eg\ \cite[\S4.7.1]{HG05}) to confirm a switching is a
  tree we need only check that it is acyclic.  Acyclicity of all
  switchings is equivalent (see \eg \cite[\S4.7.2]{HG05}) to there
  being no switching cycle in the graph of the linking.

  Atomic case: an atomic cut reduction takes $\theta$ on $\;\Gamma,\,
  \Rnode x {P\vec t}\;\,\Rnode y {\pp\vec
    t}\ncbar[arm=2pt,nodesep=1pt,angle=-90] x y\;$ to $\thetaprime$ on
  $\Gamma$.  Let $\sigma$ be mgu for $\theta$, which by definition
  equalizes the term sequences $\vec s$ and $\vec t$ (due to the left
  link in the redex) and $\vec t$ and $\vec u$ (due to the right link).  By
  transitivity $\sigma$ equalizes $\vec s$ and $\vec u$, thus the
  restriction $\sigmaprime$ of $\sigma$ to existential variables in $\thetaprime$ is
  an mgu for $\thetaprime$.  A switching cycle $\swc$ of
  $\graph(\thetaprime)$ induces a corresponding switching cycle
  $\swcc$ of $\graph(\theta)$: since $\sigmaprime$ is a restriction of $\sigma$,
  every leap in
  $\swc$ determines a corresponding leap in $\swcc$;
  if $\swc$ passes through the new link \psscalebox{.9 .9}{$\Rnode x
    \pp\vec s\h1\Rnode y P\vec u
    \ncbar[arm=1pt,nodesepB=1pt,nodesepA=2pt,angle=90] x y$}, in
  $\swcc$ go instead between $\pp\vec s$ and $P\vec u$ via the cut
  \psscalebox{0.9 0.9}{$\Rnode x {P\vec t}\h1\Rnode y {\pp\vec
      t}\ncbar[arm=2pt,nodesep=1pt,angle=-90] x y$} (\ie, via the
  $\tensor$ of its encoding $\ex{\vec x}(P\vec t\tensor\pp\vec t)$).

  Multiplicative case: a multiplicative cut reduction takes
  $\theta$ on
  \:$\Gamma,\,
  \AA\mm2\Rnode{p}{\parr}\mm2\BB
  \;
  A\mm2\Rnode{t}{\tensor}\mm2B
  \ncbar[arm=2pt,nodesep=1pt,angle=-90]
  t p$\:
  to $\thetaprime$ on
  \:$\Gamma,\,
  \Rnode{a}{\AA}\;\Rnode{aa}{A}\ncbar[arm=2pt,nodesep=1pt,angle=-90]
  a {aa},\,
  \Rnode{b}{\BB}\;\Rnode{bb}{B}\ncbar[arm=2pt,nodesep=1pt,angle=-90]
  b {bb}$\:.
  There is no change in mgu, precedences or leaps, so the reasoning
  of the usual multiplicative case \cite{Gir96} goes through directly.

  Quantifier case: Lemma~\ref{lem:quantifier-reduction}.
\end{proofof}

\begin{theorem}[Strong normalization]
Every sequence of cut reductions terminates.
\end{theorem}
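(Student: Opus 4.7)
The plan is to exhibit a well-founded measure on unification nets that strictly decreases under each cut reduction. The natural candidate is
\[
\mu(\theta) \;=\; \sum_{\cut A \AA \,\in\, \theta} |A|
\]
where the sum ranges over all cuts in $\theta$ and $|A|$ denotes the size (number of symbol occurrences) of the cut formula $A$. Note that $|A|=|\AA|$, so the choice of side is immaterial; $\mu(\theta)$ is a non-negative integer depending only on the multiset of cut formulas appearing in $\theta$.

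I would then check case-by-case that every one of the three cut reductions from Theorem~\ref{thm:cut-elim} strictly decreases $\mu$. In the \emph{atomic} case, the redex is a cut $\cut{P\vec t}{\pp\vec t}$ together with two incident links; the reduct contains a single link and no cut at that location, so $\mu$ drops by $|P\vec t|\ge 1$. In the \emph{multiplicative} case, a cut $\cut{\AA\parr\BB}{A\tensor B}$ of cut-formula size $|A|+|B|+1$ is replaced by two cuts $\cut{\AA}{A}$ and $\cut{\BB}{B}$ of total cut-formula size $|A|+|B|$, a net decrease of~$1$. In the \emph{quantifier} case, a cut $\cut{\ex x \AA}{\all x A}$ of cut-formula size $|A|+1$ is replaced by $\cut{\AA}{A}$ of size $|A|$, again a net decrease of~$1$.

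The crucial point, and the only step that requires any care, is to verify that no reduction can secretly \emph{increase} $\mu$ somewhere else in the net. This is where the locality of unification-net cut elimination (advertised in the Introduction and illustrated in Fig.~\ref{fig:cut-elim-comparison}) earns its keep: each rewrite in the list above is a purely local graph rewrite, leaving all other cuts, links, quantifiers, and leaves untouched, and in particular performing no global substitution of mgu-terms into formulas. Consequently the contribution to $\mu$ from every cut outside the redex is unchanged, and the total strictly decreases.

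Since $\mu$ takes values in $\mathbb{N}$ and strictly decreases with every cut reduction, every reduction sequence terminates, giving strong normalization. The main potential obstacle would have been the quantifier case if reduction required instantiating the mgu (as in Girard nets, where exponential blow-up precludes any naive size measure); but here reduction is local and $\mu$ descends monotonically without effort.
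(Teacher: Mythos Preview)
Your proof is correct and follows essentially the same approach as the paper: a size measure that strictly decreases under every local cut reduction. The paper's one-line proof uses the size of the entire cut sequent, while you isolate the sum of cut-formula sizes; since the ambient sequent is untouched by each reduction, these measures differ only by a constant and your case analysis simply makes explicit what the paper leaves to the reader.
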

\begin{proof}
  Each reduction reduces the size of the cut sequent.
\end{proof}

\begin{theorem}[confluence]
  Cut reduction is confluent.
\end{theorem}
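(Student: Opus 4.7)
The plan is to invoke Newman's Lemma: since cut reduction is strongly normalizing (the preceding theorem), confluence reduces to local confluence. So it suffices to show that whenever one-step reductions $\theta \to \theta_1$ and $\theta \to \theta_2$ start from a common unification net $\theta$, there exists a common reduct $\theta'$ with $\theta_1 \to^* \theta'$ and $\theta_2 \to^* \theta'$. I would establish local confluence by analyzing how pairs of redexes can overlap as subgraphs of $\theta$.

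Each cut edge determines at most one reduction, fixed by the syntactic shape of its cut formulas (atomic, multiplicative, or quantifier), so two distinct reductions necessarily act on distinct cuts. Multiplicative and quantifier redexes are entirely local to a single cut, occupying only the cut edge and the two immediately adjacent connective or quantifier vertices; they never share any graph element with a redex on a different cut. An atomic redex additionally consumes the two links incident to its cut atoms. Consequently, the only way two distinct redexes can share structure is for two atomic reductions to share such a link, which happens precisely when their atomic cuts form a chain: two consecutive atomic cuts whose common intermediate link connects one cut atom of each. In every other configuration the two redexes act on disjoint subgraphs and therefore commute trivially, since each reduction is defined purely as a local graph rewrite and the resulting linking on the composite is independent of the order.

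For the chain case, a direct computation shows commutation: whichever of the two atomic cuts is reduced first, the final result is the same net, obtained by deleting both cut edges and all three intermediate links and inserting a single new link between the two outermost atoms; the required unifier is inherited by transitivity of the term-sequence equalities enforced at each original link. Having exhausted the critical pairs, Newman's Lemma yields confluence. The only (mild) obstacle is verifying that the overlap case analysis is exhaustive; this is immediate once one observes that links are the sole shared resource among redex occurrences and that only atomic reductions consume links.
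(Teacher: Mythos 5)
Your argument is correct, and it is in fact considerably more careful than the paper's own proof, which consists of the single sentence ``Reduction is local.'' The paper's intent is exactly the commutation-of-disjoint-local-rewrites idea you spell out, but it never makes the critical-pair analysis explicit. Your route --- strong normalization (the preceding theorem) plus Newman's Lemma, reducing the problem to local confluence, then classifying overlaps --- fills the genuine content the paper elides: you correctly observe that each cut admits at most one reduction determined by the shape of its cut formulas, that multiplicative and quantifier redexes touch only their own cut edge and the adjacent root vertices, and that the only shared resource between distinct redexes is a link consumed by two atomic reductions on adjacent cuts; the chain case then joins in one further step on each side, with the resulting link between the two outermost atoms and unifiability inherited by transitivity of the term-sequence equations, exactly as you say. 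Two marginal remarks: the degenerate configuration in which a link joins the two atoms of a single cut cannot occur in a correct net (every switching of its encoding $\ex{\vec x}(P\vec t\tensor\pp\vec t)$ would contain a cycle through the tensor), so your case analysis is indeed exhaustive; and note that confluence here concerns the resulting linking on the cut sequent --- the mgu is determined by the net, so no separate argument about unifiers is needed. In short, what your approach buys is an actual proof where the paper offers an appeal to locality; what the paper's terseness buys is brevity, at the cost of leaving the atomic-chain critical pair unexamined.
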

\begin{proof}
  Reduction is local.
\end{proof}
\begin{theorem}[Linear time cut elimination]\mbox{}\label{thm:cut-elim-linear-time}\label{thm:cut-elim-linear}\\
Eliminating all cuts from a unification net $\theta$ takes time linear in the size of $\theta$.
\end{theorem}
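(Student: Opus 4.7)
The plan is to derive the linear bound from two ingredients: each individual cut reduction is a constant-time local graph rewrite, and a suitable size potential strictly decreases with every reduction, so that at most $O(|\theta|)$ reductions occur in total.

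First I would represent the unification net as a labelled graph with doubly-linked adjacency lists. Every parse-forest vertex has bounded degree (at most two children), and cut edges and axiom links add only a bounded number of further incidences at any given vertex, so inserting or deleting the constant number of vertices and edges involved in any one reduction can be carried out in $O(1)$. Inspecting the three reduction rules confirms that none of them requires any non-local work: no rule performs a global substitution within terms (leaf labels are never rewritten, only rewired), and no rule needs to recompute any mgu or re-verify any switching, because Theorem~\ref{thm:cut-elim} has already guaranteed that the result of a reduction is automatically a unification net.

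For the termination bound, define the size $\|\sigma\|$ of a cut sequent $\sigma$ to be the number of vertices in its parse forest (leaves, connectives, and quantifiers, not counting cut edges or axiom links). A direct case analysis then shows that $\|\sigma\|$ strictly decreases by exactly $2$ with every reduction: the atomic rule deletes the two cut-formula leaves $P\vec t$ and $\pp\vec t$ while merging two axiom links into one; the multiplicative rule deletes the dual $\tensor$ and $\parr$ while introducing two new cut edges between the existing subformulas; and the quantifier rule deletes the dual $\ex x$ and $\all x$ while introducing one new cut edge between the existing subformulas. Hence at most $\|\theta\|/2 = O(|\theta|)$ reductions occur.

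The cut-elimination procedure then initialises a worklist with the cuts of $\theta$ and, in each step, pops a cut, classifies it as atomic, multiplicative or quantifier by inspecting the top symbols of its two cut formulas (and, in the atomic case, the incident axiom links at its leaves) in $O(1)$, applies the corresponding local rewrite, and pushes any newly created cuts. Since the worklist receives at most two new cuts per reduction, its total traffic is $O(|\theta|)$; combined with $O(1)$ work per step this yields the claimed linear running time. The most delicate point to verify will be the constant-time local-rewrite claim in the atomic case, where the two incident axiom links must be followed through the two cut-formula leaves and rewired into a single link; but with the linked-list representation above, each of these pointer manipulations is $O(1)$ and no traversal of the terms $\vec s$, $\vec t$, $\vec u$ is ever needed.
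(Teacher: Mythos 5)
Your proposal is correct and follows essentially the same route as the paper, whose one-line proof simply invokes that cut reduction is local, strongly normalizing and confluent; your potential argument (size drops by $2$ per step, so at most $O(|\theta|)$ reductions) and the constant-time local-rewrite claim are exactly the content behind that remark, just spelled out with an explicit data structure and worklist. No gap; you have merely made the paper's implicit bookkeeping explicit.
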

\begin{proof}
  Cut elimination is strongly normalizing, confluent and local.
\end{proof}

\section{Surjectivity Theorem with cut}

The principle that a cut is akin to an existentially closed tensor
\begin{equation*}
  \h4\cut{\,A\,}{\:\AA\:} \h4 \approx \h4 \ex {\vec x} (A\tensor \AA)
\end{equation*}
yields surjectivity essentially for free: view each cut rule as a tensor rule
followed by zero or more existential rules (one per free variable $x$
in $A$), appeal to the cut-free surjectivity theorem (Theorem~\ref{thm:surjectivity}), then observe that $\exists$-rules can always be commuted upwards (so that encoded $\exists$-rules can be brought up to immediately below their encoded tensor rule, ready for conversion into a cut rule). This argument is detailed and formalized below.

\subsection{Extended cut rule}\label{sec:extended-cut-rule}

To streamline the formalization, corresponding to the existential closure we extend the cut rule by retaining the cut formulas in the conclusion and allowing a substitution $\sigma$ of the cut formulas $A$ and $\AA$ in the hypotheses:
\begin{center}\begin{math}
\cutrule
  {\pfseq{\Gamma\m1,\mm2A\sigma}}
  {\pfseq{\AA\sigma,\mm2\Delta}}
  {\Gamma\m1,\cut{A}{\AA},\mm2\Delta}
\v1\end{math}\end{center}
Here $\sigma$ is any substitution of terms for free variables in $A$
and $\AA$.
Two examples are below:
\twoseqs
In the left example $\sigma$ is trivial, $\sigma=\unifier{\gets x x}$,
so that $A\sigma=A$,
and in the right example $\sigma=\unifier{\gets y {fx}}$.
We write \fomllplus for this extended sequent calculus.

The translation of a proof to a linking is unchanged from the cut-free
case: trace the atoms down from the axioms onto the conclusion.
For example, the two proofs above translate to the linkings below.
\twolinkings

\subsection{Surjectivity Theorem}

\begin{theorem}[surjectivity]\mbox{}\label{thm:cut-surjectivity}\\
The translation from \textnormal{\fomllplus} proofs to linkings is a surjection onto unification nets.
\end{theorem}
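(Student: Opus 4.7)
The plan is to reduce to the cut-free surjectivity theorem (Theorem~\ref{thm:surjectivity}) via the encoding $\encode\Delta$. Given a unification net $\theta$ on a cut sequent $\Delta$, by definition (Section~\ref{sec:cut}) $\theta$ constitutes a cut-free unification net on $\encode\Delta$, the sequent obtained by replacing each cut $\cut A\AA$ by the existentially closed tensor $\ex{\vec x}(A\tensor\AA)$ with $\vec x=x_1,\ldots,x_n$ the free variables of $A$. Theorem~\ref{thm:surjectivity} therefore furnishes a cut-free \fomll proof $\Pi_0$ of $\encode\Delta$ whose translation is $\theta$.

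Next, I would convert $\Pi_0$ into an \fomllplus proof $\Pi$ of $\Delta$ by folding each encoded cut back into a single extended cut rule. For every encoded cut $\ex{x_1}\cdots\ex{x_n}(A\tensor\AA)$, the proof $\Pi_0$ contains a $\tensor$ rule introducing $A\tensor\AA$, with $n$ corresponding $\exists$ rules for $x_1,\ldots,x_n$ appearing somewhere below it. Using the commutations of Figure~\ref{fig:commutations}, I would push each such $\exists x_i$ rule upward until the $n$ $\exists$ rules sit contiguously just below the $\tensor$. Then I would collapse the resulting block $\tensor/\ex{x_1}/\cdots/\ex{x_n}$ into a single extended cut rule of Section~\ref{sec:extended-cut-rule}, with substitution $\sigma=\openU\gets{x_1}{t_1},\ldots,\gets{x_n}{t_n}\closeU$ recovered from the witnesses $t_i$ chosen by the $\exists x_i$ rules. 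Performing this in parallel for every cut yields $\Pi$, and translation is preserved because the atoms in $\Pi$ track through the folded cut rule exactly as they did through the original $\tensor/\exists$ cascade in $\Pi_0$, so $\translate\Pi=\theta$.

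The main obstacle is verifying that the upward commutations are never blocked. Inspecting Figure~\ref{fig:commutations}, the only possible obstruction when moving an $\exists$ rule upward is the $\exists/\forall$ side condition $y\not\in t$, which would fail if the witness $t_i$ for $x_i$ mentioned a variable $y$ bound by an intervening $\forall y$ rule above it. However, assuming (without loss of generality) that all sequents are clean, every universally bound variable $y$ is distinct from every free variable and every other bound variable in the proof, so $y$ cannot appear free in any witness term occurring below its $\forall y$ introduction. Hence the side condition is automatically satisfied, the upward commutations always succeed, and we obtain a well-formed \fomllplus proof $\Pi$ of $\Delta$ with $\translate\Pi=\theta$, establishing surjectivity.
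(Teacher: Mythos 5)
Your argument for the ``every unification net is hit'' direction is essentially the paper's own: encode each cut $\cut{A}{\AA}$ as $\ex{\vec x}(A\tensor\AA)$, apply the cut-free surjectivity theorem (Theorem~\ref{thm:surjectivity}), commute the $n$ $\exists$-rules of each encoded cut upwards until adjacent to their tensor rule, and collapse each block into one extended cut rule whose substitution $\sigma$ records the witnesses; tracking of atoms is unaffected, so the resulting \fomllplus proof translates to $\theta$. One small caveat on your justification that the commutations are never blocked: cleanliness of sequents is not quite the right reason, since witnesses in a proof may perfectly well contain universal (eigen)variables --- that is exactly what a precedence $\dep x y$ records. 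The correct reason is the eigenvariable side condition of the $\forall$ rule itself: if a $\forall y$ rule lies \emph{above} the $\exists x_i$ rule (i.e.\ between it and the tensor), then $y$ cannot occur free in any sequent below that $\forall$ rule, in particular not in the hypothesis of the $\exists x_i$ rule, hence $y\notin t_i$ and the $\exists/\forall$ commutation applies. With that repair your first direction is fine.

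The genuine gap is that you prove only half of the statement. ``Surjection onto unification nets'' also requires that the image of the translation is \emph{contained} in the set of unification nets, i.e.\ that every \fomllplus proof (with its extended cut rules and their substitutions $\sigma$) translates to a correct linking on its concluding cut sequent; this is not covered by Theorem~\ref{thm:translation}, which is cut-free, and your proposal never addresses it. The paper's proof has a second, ``conversely'' paragraph for exactly this: given an \fomllplus proof $\Pi$ translating to $\theta$ on $\Delta$, replace each extended cut rule by a tensor rule followed by $\exists$-rules (with witnesses read off from $\sigma$) to get a cut-free proof of $\encode{\Delta}$ with the same atom tracking, conclude via the cut-free theorem that $\theta$ is a cut-free unification net on $\encode{\Delta}$, and hence, by the very definition of correctness for linkings on cut sequents, a unification net on $\Delta$. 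You would need to add this (short) argument to establish the theorem as stated.
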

\begin{proof}
  Let $\theta$ be a unification net.  Replace each cut $\cut{A}{\AA}$ by the corresponding
  existentially closed tensor $\ex{\vec x}(A\tensor\AA)$, then apply
  the cut-free surjectivity theorem (Theorem~\ref{thm:cut-free-surjectivity}) to obtain a cut-free proof
  $\tenex\Pi$. For each cut $\acut$ between formulas with $n$ free
  variables, commute (if necessary) the $n$ $\exists$-rules associated
  with $\acut$ upwards in the proof to be adjacent to the tensor rule
  associated with $\acut$.  Form $\Pi$ by replacing the tensor rule
  and $n$ $\exists$-rules of $\acut$ by a single cut rule, for each
  cut $\acut$.  By induction, $\Pi$ translates to
  $\theta$.

  Conversely, suppose $\Pi$ translates to $\theta$, with
  concluding cut sequent $\Delta$.  Form $\tenex\Pi$ with concluding
  cut-free sequent $\tenex\Delta$ by replacing each cut rule in $\Pi$
  by a tensor rule followed by $\exists$-rules.  Thus $\tenex\Delta$
  is the result of replacing each cut $\cut{A}{\AA}$ in $\Delta$ by
  the corresponding existentially closed tensor $\ex{\vec
    x}(A\tensor\AA)$.  Since the transformation of $\Pi$ to
  $\tenex\Pi$ does not change the tracking of atoms down the proof
  onto the conclusion, $\theta$ on $\tenex\Delta$ can be viewed as a
  translation of $\tenex\Pi$.  Via the cut-free surjectivity
  theorem, $\theta$ on $\tenex\Delta$ is a cut-free unification net.
  Since the definition of a unification net with cuts was formulated
  by encoding cuts as closed existential tensors, $\theta$ on
  $\Delta$ is a unification net.
\end{proof}

\subsection{Examples illustrating surjectivity with cut}

We illustrate Theorem~\ref{thm:cut-surjectivity} with the following pair of linkings (copied
from the end of Section~\ref{sec:extended-cut-rule}):
\twolinkings
First replace each cut by its encoding:
\begin{center}\v2\begin{math}
\renewcommand{\gap}{\hspace{3ex}}\psset{labelsep=4ex}
\newcommand{\arm}{5}
\Rnode{p}{P} fx\h{.5}\gap \ex x(\Rnode{pp}{\pp} fx\tensor \Rnode{p1}{P} fx)\h{1.5}\gap\ex z \Rnode{pp1}{\pp} z
\ncbar[angle=90,arm=\arm pt,nodesepA=1pt,nodesepB=2pt]{p}{pp}%
\ncbar[angle=90,arm=\arm pt,nodesepA=1pt,nodesepB=2pt]{p1}{pp1}
\h{15}
\Rnode{p}{P} fx\h{.5}\gap \ex y(\Rnode{pp}{\pp} y\tensor \Rnode{p1}{P} y)\h{1.5}\gap\ex z \Rnode{pp1}{\pp} z
\ncbar[angle=90,arm=\arm pt,nodesepA=1pt,nodesepB=2pt]{p}{pp}%
\ncbar[angle=90,arm=\arm pt,nodesepA=1pt,nodesepB=2pt]{p1}{pp1}
\end{math}\end{center}
Apply the cut-free surjectivity theorem:
\twoencodedseqs
Finally. replace each $\tensor$-rule-$\exists$-rule pair by an extended cut rule:
\twoseqs

\section{Unification nets resolve the exponential blow-ups of Girard nets}%
\label{sec:girard-net-complexity-issues}\label{sec:girard-blow-up}%
\label{sec:complexity-issues}\label{sec:complexity}\label{sec:girard-cut-elim-exponential}%

Redundant existential witnesses cause Girard's \fomll nets to suffer from two major complexity issues,
absent from MLL nets:
\begin{itemize}
\myitem 1
\defn{Exponential computation blow-up}:
\emph{Cut elimination is non-local and both exponential-time and exponential-space.}
\hspace{-2pt}Reducing\hspace{-.35pt} a \hspace{-.35pt}quantifier cut in \hspace{-.35pt}a\hspace{-.35pt} Girard net substitutes witnesses globally throughout the net: see Figure~\ref{fig:cut-elim-comparison} on page~\pageref{fig:cut-elim-comparison} (top half) for an illustration.
Chaining together a series of such substitutions, each duplicating a term, results in exponential growth of a Girard net during cut elimination; see Appendix~\ref{sec:girard-cut-elim-blowup}, especially Figure~\ref{fig:cut-elim-complexity-comparison}, for an example.

This is a severe regression from MLL nets, whose cut elimination is local and linear-time.
\myitem 2
\defn{Exponential size blow-up}:
\emph{Some sequents demand exponentially large cut-free Girard nets.}
The size of the smallest cut-free Girard net on a sequent grows exponentially with the size of the sequent.
In proof complexity terminology \cite{CR79}, cut-free Girard nets are not \emph{polynomially bounded}:
there is no
polynomial $p$ against which every provable sequent $\Gamma$ has a \emph{short} cut-free Girard net, \ie, a cut-free Girard net $G$ such that $|G|\le p(|\Gamma|)$, where $|X|$ is the size of $X$.
\begin{figure*}\begin{center}%
\newcommand\scalevalue{.89}
\subbox{A minimal cut-free proof}{\scalebox{\scalevalue}{\shortestBlowupProofFour}}
\v9
\subbox{The unique (hence minimal) cut-free Girard net}{\scalebox{\scalevalue}{\blowupGirardNet}}
\v9
\subbox{The unique unification net}{\scalebox{\scalevalue}{\(\blowupUnet\)}}
\v5
\end{center}\caption{\label{fig:size-blow-up}\label{fig:blow-up-eg}\label{fig:blow-up-comparison}Illustrating exponential size blow-up in cut-free \fomll proofs and cut-free Girard nets. The top sub-figure shows a minimal cut-free \fomll proof of $\;\protect\blowupSequentInline\;$, where $P$ is a $4$-ary predicate and $\protect\idot$ is an infix binary function symbol. The mid sub-figure shows the unique (hence minimal) cut-free Girard net. Due to explicit existential witnesses, both have an axiom rule/link which is exponentially larger than the sequent:
in the general case with $P$ an $n$-ary predicate (see Appendix~\ref{sec:size-blow-up}), the axiom rule/link contains
$2(2^n-1)$ occurrences of the constant $c$ (here $2(2^4-1) = 30$). In contrast, the cut-free unification net grows only linearly with $n$. Since this example has no multiplicative connective, it also shows that first-order additive proof nets with explicit witnesses suffer from the same exponential blow-up. Indeed, this example shows that quantifier-only sequent calculus suffers the blow-up.}\end{figure*}
An illustrative example of size blow-up is shown in Figure~\ref{fig:blow-up-eg}, and detailed in Appendix~\ref{sec:size-blow-up}.\footnote{Section~\ref{sec:exponential-compression} showed an alternative example, involving a par $\parr$. The example in Appendix~\ref{sec:size-blow-up} is more general since it is quantifier-only.}

This represents a major deficiency in Girard's \fomll nets because a polynomially-bounded variant of cut-free \fomll sequent calculus exists \cite{LS94}, placing \fomll in the complexity class NP.
It is also a severe regression from MLL nets, since cut-free MLL nets are \emph{linearly bounded}: there exists a multiplier $k$ against which every provable MLL sequent $\Gamma$ has a short cut-free MLL net, \ie, a cut-free MLL net $\theta$ such that $|\theta|\le k|\Gamma|$.
In fact, since every cut-free MLL net $\theta$ on $\Gamma$ is just an axiom linking on $\Gamma$, cut-free MLL nets are \emph{linearly sized}:
there exists a multiplier $k$ such that, for every provable sequent $\Gamma$, every cut-free MLL proof net on $\Gamma$ satisfies $|\theta|\le k|\Gamma|$.
\end{itemize}
The size blow-up example in Figure~\ref{fig:blow-up-eg} is particularly interesting because it has no multiplicative connective, thus also shows that Girard's first-order additive nets \cite{Gir96} with explicit existential witnesses suffer from the same size blow-up; indeed, it shows that even quantifier-only sequent calculus has the blow-up.

Unification nets resolve both of the complexity issues with Girard nets:
\begin{itemize}
\myitem{1}
\emph{Cut elimination
is local and linear-time}.
As in an MLL proof net, a cut reduction in a unification net is a purely local graph rewrite
and
the time complexity of eliminating all cuts from a unification net $\theta$ is linear
(Theorem~\ref{thm:cut-elim-linear-time}).
\myitem{2}
\emph{Cut-free unification nets are linearly-sized}.
Like a cut-free MLL proof net, a cut-free unification net is a linking on a sequent. Thus cut-free unification nets are linearly bounded, since every provable \fomll sequent $\Gamma$ has a cut-free unification net of size  $O(|\Gamma|)$. Furthermore, like cut-free MLL proof nets, cut-free unification nets are \emph{linearly sized}: \emph{every} cut-free unification net on $\Gamma$ has size $O(|\Gamma|)$.
\end{itemize}

\section{Factorizing the surjection through Girard nets and unification calculus}

In this section we factorize the surjection $\translate{-}$ from cut-free \fomll proofs onto cut-free unification nets defined in Section~\ref{sec:translation} in two different ways: the two outer paths of the commuting square described in Section~\ref{sec:beyond-sequentialization} and depicted in Figure~\ref{fig:beyond-sequentialization}.

In Section~\ref{sec:girard-factorization} we pass through cut-free Girard nets. The first leg thus eliminates redundant rule orderings, and the second leg eliminates redundant existential witnesses.
In Section~\ref{sec:ucalc-factorization} we do the opposite, first eliminating redundant witnesses, then eliminating redundant rule orderings, via an artificial abstraction of \fomll sequent calculus without explicit existential witnesses which we call \emph{unification calculus} (whose sole purpose is to obtain such a factorization).

\subsection{Factorization through cut-free Girard nets}\label{sec:girard-factorization}

Define the translation of a cut-free Girard net to a linking in the same manner as the translation of a cut-free proof: track the axiom links down onto the concluding formulas.
\begin{lemma}\label{lem:translate-girard}
  Every cut-free Girard net translates to a cut-free unification net.
\end{lemma}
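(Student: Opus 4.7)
The plan is to factor the translation through sequentialization of Girard nets. By Girard's sequentialization theorem \cite{Gir91q}, every cut-free Girard net $G$ on a sequent $\Gamma$ arises as the translation of some cut-free \fomll sequent proof $\Pi$. I will show that the linking obtained by tracking the axiom links of $G$ down onto $\Gamma$ coincides with $\translate{\Pi}$, and then invoke Theorem~\ref{thm:translation}.

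First I would make precise what ``tracking axiom links down'' means for a Girard net. Each axiom link in $G$ is a pair of strictly dual atom occurrences $\{P\vec{t},\,\pp\vec{t}\}$; in any sequentialization $\Pi$, these two atom occurrences descend (in the sense of Section~\ref{sec:ascent-descent}) through $\parr$, $\tensor$, $\forall$ and $\exists$ rules to a pair of dual leaves of $\Gamma$, forming a link of $\translate{\Pi}$. The key point is that this descent is invariant under sequentialization: choosing a different sequential order for the rules of $G$ yields the same pair of leaves in $\Gamma$, because each rule instance only permutes local subterms/subformulas and its tracking function is determined by its conclusion, not its position in the proof. Thus the linking read off directly from $G$ equals $\translate{\Pi}$ for any sequentialization $\Pi$.

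Having established this equality, Lemma~\ref{lem:translate-girard} follows immediately from Theorem~\ref{thm:translation}: $\translate{\Pi}$ is a cut-free unification net on $\Gamma$, hence so is the linking obtained directly from $G$.

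The main obstacle will be verifying the invariance claim in the second paragraph with care. Girard nets carry explicit witness annotations on their $\exists$ vertices, and different sequentializations may write these $\exists$ rules in different orders relative to surrounding $\forall$, $\tensor$, and $\parr$ rules; one must check that the tracking function composes consistently along any sequentialization. This is essentially the content of Girard's original coherence between net and sequent presentations, and can be handled by a straightforward induction on the number of rules, using the rule commutations of Figure~\ref{fig:rule-commutations}: each admissible commutation visibly preserves the endpoints of every descending atom trace. Once this invariance is in hand, no further unification or switching argument is needed — all the work is already packaged in Theorem~\ref{thm:translation}.
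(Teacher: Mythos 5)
Your proposal is correct in substance but takes a genuinely different route from the paper. You reduce the lemma to two ingredients: Girard's sequentialization theorem for his \fomll nets (an external result, cited but not relied on by the paper for this lemma), plus Theorem~\ref{thm:translation}, after checking that the linking read off a Girard net $G$ coincides with $\translate{\Pi}$ for a sequentialization $\Pi$ of $G$. That coincidence is essentially definitional: the proof-to-net translation turns each rule of $\Pi$ into a link of $G$ with the same hypotheses and conclusion, so the descent of an atom through the rules of $\Pi$ is literally the same as its descent through the links of $G$; in particular your invariance-across-sequentializations argument via the commutations of Figure~\ref{fig:rule-commutations} is more than you need (agreement with the one sequentialization producing $G$ suffices, and different sequentializations then automatically agree). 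The paper instead argues directly on the net, with no appeal to sequentialization: it builds a unifier for the tracked linking $\theta$ by reading off the witness $t$ of each non-vacuous $\exists$-link $\frac{A\unifier{\gets x t}}{\ex x A}$ (well-defined because Girard axiom links join strictly dual atoms with identical term sequences), and then shows every switching of $\theta$ induces a switching of $G$, each leap $\ex x\toedge\all y$ of $\theta$ corresponding to a jump of $G$ arising from an occurrence of $y$ in the witness assigned to $x$; correctness of $G$ then forces every switching of $\theta$ to be a tree. Your approach is shorter once sequentialization is granted, but it imports a heavier external theorem; the paper's direct argument is self-contained given only the Girard correctness criterion, and as a by-product exhibits the explicit unifier and the leap/jump correspondence, which is what later justifies the remark that both surjections in the commuting square of Figure~\ref{fig:beyond-sequentialization} are given by the same tracking of dual predicate symbols.
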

\begin{proof}
  Let $G$ be a cut-free Girard net on $\Gamma$. Thus the
  concluding formulas of $G$ are in bijection with the formulas
  in $\Gamma$.
  Let the linking $\theta$ on $\Gamma$ be the translation of $G$.
  We must prove that
  $\theta$
  is correct. First, $\theta$ is unifiable: define the unifier $\sigma$
  by $\gets x t$ for each $\exists$-link $\frac{A\unifier{\gets x
      t}}{\ex x A}$ which is non-vacuous (\ie, such that $x$ occurs free in $A$).
  The assignment $\sigma$
  is a well-defined unifier since each axiom link of $G$ goes
  between atoms $P(t_1,\ldots, t_n)$ and $\pp (t_1,\ldots, t_n)$ (with
  identical term sequences $(t_1,\ldots, t_n)$, in contrast to
  unification nets for which we require only that the predicate
  symbols be dual).  Every switching of $\theta$ induces a
  corresponding switching of $G$: the choice of left/right into a
  $\parr$ in $\graphof\theta$ yields a choice for the corresponding formula
  in $G$, and a choice of edge $\exists x\toedge\forall y$ into
  the vertex $\forall y$ in $\graphof\theta$, from the precedence $\dep x y$
  associated with $x\mapsto t$ in $\sigma$ for some term $t$ containing
  $y$, determines a corresponding jump in $G$ from the hypothesis
  of the $\exists$-link
  $\frac{A\unifier{\gets x t}}{\ex x A}$ to the conclusion
  of the $\forall$-link $\frac{B}{\all y B}$.  Thus $\theta$ is
  correct, since any non-tree switching of $\theta$ induces a
  non-tree switching of $G$.
\end{proof}
\begin{theorem}
  The translation from cut-free Girard nets to cut-free unification nets is surjective.
\end{theorem}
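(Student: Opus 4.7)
The plan is to factorize through cut-free \fomll proofs, using Theorem~\ref{thm:cut-free-surjectivity} (Cut-free surjectivity) together with the well-known translation from cut-free \fomll proofs to cut-free Girard nets.

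Given a cut-free unification net $\theta$ on $\Gamma$, first apply Theorem~\ref{thm:cut-free-surjectivity} to obtain a cut-free \fomll proof $\Pi$ with $\translate\Pi=\theta$. Translate $\Pi$ to a cut-free Girard net $G$ by the standard construction, tracking dual pairs of predicate symbols from each axiom down through the proof and recording, at every $\exists$ rule instance inferring $\ex x A$ from $\Asub$, an explicit $\exists$-link witnessed by $t$ (and similarly a $\forall$-link at each $\forall$ rule, a par link at each $\parr$ rule, etc.).

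The key verification is that the translation $G\mapsto$~unet of Lemma~\ref{lem:translate-girard} sends this $G$ back to $\theta$. Both the composite $\Pi\mapsto G\mapsto$~unet and the direct translation $\Pi\mapsto\translate\Pi$ are defined by the same underlying data, namely the tracking of dual predicate symbols from axiom rules down onto the conclusion. A routine induction on $\Pi$ (following the cases in Figure~\ref{fig:translation}) confirms that the two paths produce the identical linking on $\Gamma$: each logical rule commutes trivially with both translations at the level of leaf tracking. Hence $G$ translates to $\theta$, which proves surjectivity.

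The only subtlety, and the closest thing to an obstacle, lies in the $\exists$ case: the Girard net $G$ records an explicit witness $t$ at each $\exists$-link, while the unification net does not. But this witness is precisely the redundant information discarded by the translation of Lemma~\ref{lem:translate-girard}; it plays no role in determining the linking, which depends only on leaf tracking. So the inductive step in the $\exists$ case reduces to the observation that substituting $t$ for $x$ in $A$ does not alter the identity of the leaves of $\Asub$ (only their predicate labels), and the same links are traced through.
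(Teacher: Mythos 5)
Your proof is correct, but it takes a genuinely different route from the paper's. You factorize through sequent calculus: invoke Theorem~\ref{thm:cut-free-surjectivity} to pull the unification net $\theta$ back to a cut-free proof $\Pi$, push $\Pi$ forward to a Girard net $G$ by the standard desequentialization, and observe that the composite $\Pi\mapsto G\mapsto{}$unet coincides with $\translate{-}$ because all three translations are driven by the same leaf tracking (the paper itself makes exactly this commutation remark, immediately after its proof, as the "diagonal" of Figure~\ref{fig:beyond-sequentialization}). The paper instead proves the theorem by a direct \emph{unfolding} construction: it builds $G$ from $\theta$ bottom-up, using the mgu $\sigma$ to supply the explicit existential witnesses, and then verifies internally that the result is a correct Girard net (strict duality of the unfolded axiom links via $t_i\sigma=t'_i\sigma$, and absence of switching cycles via a jump-to-leap correspondence). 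What the paper's approach buys is the explicit "choose witnesses" map from unification nets to Girard nets in Figure~\ref{fig:beyond-sequentialization}, constructed without re-sequentializing and without appealing to the classical soundness of the sequent-to-Girard-net translation; what your approach buys is brevity, at the cost of leaning on that standard external fact (that desequentialization of a cut-free \fomll proof yields a correct Girard net) and on the heavier Theorem~\ref{thm:cut-free-surjectivity}, which is, however, already available at this point in the paper. Your treatment of the $\exists$ case (witnesses affect only predicate labels, not leaf identity, matching the simplifying assumption of Figure~\ref{fig:translation}) is the right observation to make the composite-tracking claim rigorous.
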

\begin{proof}
  Let $\theta$ be a cut-free unification net on $\Gamma$ with mgu
  $\sigma$.  We unfold $\theta$ into a cut-free Girard net $G$ by
  working upwards from each root of $\Gamma$.

  We first unfold each
  formula $A$ in $\Gamma$ to a fragment of $G$ with conclusion
  $A$.  Define the unfolding $\unfold A$ of a formula $A$ as the
  following tree, alternating between Girard-links and formulas, whose root,
  called the \emph{conclusion} of $\unfold A$, is the formula
  $A$.
  If $A$ is an atom, then $\unfold A=A$.  If $A=B\tensor C$, define
  $\unfold A$ as $\frac{\unfold B\h3\unfold C}{B\mm1\tensor\mm1 C}$, the
  disjoint union of $\unfold B$ and $\unfold C$ and a $\tensor\mm2$-link
  taking the conclusions $B$ and $C$ of $\unfold B$ and $\unfold C$ as
  hypotheses and $A=B\tensor C$ as its conclusion.
  If $A=B\parr C$ define $\unfold A$ analogously, with $\parr$ in
  place of $\tensor\mm2$.
  If $A=\all x B$, define $\unfold A$ as $\frac{\unfold B}{\all x B}$,
  the tree $\unfold B$ with a $\forall$-link taking the conclusion $B$
  of $\unfold B$ as its hypothesis and $A=\all x B$
  as its conclusion.
  If $A=\ex x B$, define $\unfold A$ as
  $\frac{\unfold B[\gets x t]}{\ex x B}$, where $t$ is the the term
  assigned to $x$ by the mgu $\sigma$ and $\unfold B[\gets x t]$ is the
  result of substituting $t$ for $x$ in every formula in
  $\unfold B$.
  Thus $\unfold A$ is the tree $\unfold B[\gets x t]$ and a $\exists$-link whose hypothesis is the conclusion $B[\gets x t]$ of
  $\unfold B[\gets x t]$ and whose conclusion is $A=\ex x B$.
  Define the unfolding $\unfold\Gamma$ of $\Gamma$ as the disjoint
  union of the unfoldings of its formulas.
  By induction, the atoms in $\unfold\Gamma$ are in
  bijection with the leaves of $\Gamma$.

  Define $G$ from $\unfold\Gamma$ as follows: for each link
  $\{l,\ll\}$ in $\theta$ between a pair of leaves in $\Gamma$, add a
  axiom-link \;$\overline{\scriptstyle\:\unfold
    l\;\;\;\;\unfold{l}\m2'}$\; between the corresponding pair of
  atoms in $\unfold\Gamma$.

  We must show that $G$ is a cut-free Girard net.
  First we prove that the atoms either end of each axiom-link in $G$ are strictly dual: if
  one end is $P(s_1,\ldots, s_n)$ the other is $\pp (s_1,\ldots, s_n)$
  (identical term sequences).  Each axiom-link $\unfold L$ in
  $G$ was derived from a link $L$ in $\theta$ between
  leaves labelled $P(t_1,\ldots, t_n)$ and $\pp (t'_1,\ldots, t'_n)$.
  Since the mgu $\sigma$ equalizes corresponding term sequences, we
  have $t_i\sigma=t'_i \sigma$ where $t\sigma$ denotes the result of substituting
  existential variables in $t$ according to $\sigma$.  The same
  substitutions of existential variables applied during the
  construction of the unfolded formulas in $\unfold\Gamma$, hence the axiom-link $\unfold L$ in $G$ is between $P(t_1\sigma,\ldots, t_n\sigma)$ and
  $\pp (t_1\sigma,\ldots, t_n\sigma)$, which are strictly dual (by definition of unifiability of $\theta$ with mgu $\sigma$).

  We must show that $G$ has no switching cycle.  Without
  loss of generality every jump from a formula $A$ with an
  eigenvariable $x$ to the conclusion $\all x B$ of the corresponding
  $\forall x$-link can move to an edge from either (a) the hypothesis
  $B$ of the $\forall x$-link, or (b) the hypothesis $C$ of a
  $\exists$-link: if $A$ is above $\all x B$, choose (a), following
  the path between $A$ and $B$; otherwise $A$ must have a
  $\exists$-link below it which prevents the eigenvariable $x$ from
  being free in the conclusion,
  and we choose (b), following the path between $A$ and the hypothesis
  $C$ of the $\exists$-link.
  With switchings so transformed, there is a one-to-one correspondence
  between switchings of $G$ and switchings of $\theta$.

  By induction, since the translation from cut-free Girard nets to cut-free unification nets (Lemma~\ref{lem:translate-girard})
  uses the converse steps to those above (removing rather than adding witnesses), $G$ translates to $\theta$.
\end{proof}

The composite of the two surjections, from cut-free \fomll to cut-free Girard nets, then on to cut-free unification nets, is the translation $\translate{-}$ from cut-free \fomll proofs to unification nets defined in Section~\ref{sec:translation} (the diagonal of Figure~\ref{fig:beyond-sequentialization}): both surjections are defined by the same tracking of dual predicate symbols down from axioms.

\subsection{Factorization through cut-free unification calculus}\label{sec:ucalc-factorization}\label{sec:unification-calculus}

Let $\Pi$ be a proof of $\Gamma$. Define its \defn{skeleton} as $\Pi\id$ for $\id$ the identity on the non-vacuous existential variables of $\Pi$.
(Re-witnessing $\Pi\sigma$ was defined in Section~\ref{def:re-witnessing}.)
Define a \defn{unification proof} of $\Gamma$ as a skeleton of a proof of $\Gamma$, and define \defn{unification calculus} as the \fomll proof system comprising unification proofs.
In general, a skeleton will not be a well-formed sequent calculus proof since its axioms can be ill-formed, with non-dual predicates $Pt_1\ldots t_n$ and $\pp u_1\ldots u_1$ (just like the links of a unification net).

A unification proof $U$ can be verified in polynomial time: check the unifiability of the (ill-formed) axioms; if unifiable, let $\sigma$ be a most general unifier; verify that $U\sigma$ is a well-defined \fomll proof. Naively this is exponential time (since constructing the mgu is exponential time and space, in general), however, we can use the same technique as in the quadratic-time complexity proof (Theorem~\ref{thm:quadratic}) to build a sequential mgu, then lazily confirm that every rule of $U\sigma$ would be a well-formed rule, were we to actually carry out the substitution $\sigma$ at each rule (verifying that the predicates in every axiom link become dual, and the $\forall$ rule side condition on free variables holds).

That the surjection from cut-free \fomll to cut-free unification nets factorizes through cut-free unification calculus is self-evident: instead of extracting links directly from a proof $\Pi$, first take the skeleton $\Pi\id$ (dropping explicit witnesses) then extract the links from $\Pi\id$ (whose axiom rules are the same as $\Pi$, only with some terms substituted).

We can define cut elimination on unification calculus by mimicking sequent calculus cut elimination, without the explicit witnesses. Since witnesses are absent, cut elimination is polynomial-time.

\appendix
\appendixpage
\addappheadtotoc

\section{MLL sequent calculus cut elimination is non-local and at best quadratic}\label{sec:mll-seq-calc-cut-elim}

Let $\Pi_n$ be the following cut-free proof, with $n-1$ tensor rules:
\[\hspace{-15ex}\quadraticSeqCalcPiSubn\]
Let $\Pi_n^*$ be $\Pi_n$ followed by $n$ cut rules against axioms, as follows:
\quadraticSeqCalcCutElimEg
To reduce a cut we must first commute $n-1$ rules to raise the cut rule and ready the redex. Since redexes are blocked in this manner,
MLL sequent calculus is not local.
\begin{proposition}
  MLL sequent calculus cut elimination is at best quadratic time.
\end{proposition}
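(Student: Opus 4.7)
The plan is to exhibit the explicit family $\Pi_n^*$ from the excerpt as a witness to a quadratic lower bound. First I would verify the size count: $\Pi_n$ consists of $n$ axiom rules and $n{-}1$ tensor rules, and $\Pi_n^*$ appends $n$ cut rules each paired with a coaxiom, so the total rule count and total sequent length are both $\Theta(n)$, hence $|\Pi_n^*|=\Theta(n)$.

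Next I would analyze the obstruction to reducing each cut. In $\Pi_n$, the formula $\pp_i$ is introduced at its axiom $P_i,\pp_i$ near the top of the left spine and descends as a passive side formula through all $n{-}1$ tensor rules before reaching the conclusion. The innermost cut in $\Pi_n^*$ has cut formula $\pp_1$ (matched with $P_1$ in $\coaxruleP1$). The atomic key reduction can fire only once the cut rule is immediately above the axiom introducing $\pp_1$; until then, every enabled reduction step on that cut is a commutation pushing it past a single tensor rule along the path to $P_1,\pp_1$. Hence at least $n{-}1$ commutations must be performed before that cut can be discharged.

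Then I would iterate the argument. The atomic key reduction against the axiom $P_1,\pp_1$ leaves the tensor skeleton of $\Pi_n$ intact (it merely consumes the coaxiom and one axiom leaf, returning an axiom rule in the same position). Consequently, when the next cut (on $\pp_2$) becomes innermost, its cut formula still descends from the axiom $P_2,\pp_2$ through $n{-}1$ (or $n{-}2$, depending on how one accounts for the absorbed leaf) tensor rules, requiring $\Omega(n)$ further commutations; and likewise for each of the $n$ cuts. Summing over all cuts gives $\sum_{i=1}^{n}\Omega(n)=\Omega(n^2)$ rewrite steps. Since each commutation and each key reduction is a constant-size local manipulation, cut-eliminating $\Pi_n^*$ takes time $\Omega(n^2)$ while $|\Pi_n^*|=\Theta(n)$, establishing the quadratic lower bound.

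The main obstacle is making the counting argument robust against arbitrary reduction strategies, including ones that interleave commutations belonging to different cuts or defer key reductions. The key invariant to maintain is that each of the $n$ atomic key reductions must eventually occur, and just before it occurs the corresponding cut rule must be adjacent to the axiom $P_i,\pp_i$; since each commutation moves a given cut rule past exactly one rule of $\Pi_n$, the total number of commutations charged to cut $i$ is at least the distance from its starting position to the $P_i,\pp_i$ axiom, which is $\Omega(n)$ regardless of the order in which steps are scheduled. This charging argument yields the $\Omega(n^2)$ bound globally.
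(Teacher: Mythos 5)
Your proposal is correct and is essentially the paper's own argument: the paper proves the proposition with exactly this family $\Pi_n^*$, observing that its size grows linearly in $n$ while the number of commutations needed to bring each of the $n$ cuts up to its axiom (through the $\Theta(n)$ tensor rules of $\Pi_n$) grows quadratically. Your write-up merely spells out the counting and adds the strategy-independent charging argument, which the paper's one-sentence proof leaves implicit.
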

\begin{proof}
The number of commutations required to eliminate all cuts from $\Pi_n^*$ increases quadratically in $n$, while the size (number of rules) of $\Pi_n^*$ increases linearly in $n$.
\end{proof}

\section{Additional redundancy in Girard's 1996 variant}\label{sec:additional-redundancy}

The \cite{Gir96} variant of Girard's \fomll proof nets \cite{Gir87,Gir88,Gir91q} introduces additional redundancy not present in sequent calculus, since it
annotates every $\exists$-link with an existential witness a term $t$, even when the quantifier is vacuous (binding no variable).
For example,
{\figadditionalredundancy}Figure~\ref{fig:girard-additional-redundancy} (left) shows the unique cut-free proof of \:$\additionalproved$\:.
The centre of the figure shows the infinitely family of cut-free \cite{Gir96} nets $G_t^{96}$, one per term $t$.
The unique cut-free unification net is shown on the right.

\section{Exponential computation blow-up in Girard nets}\label{sec:girard-cut-elim}\label{sec:girard-cut-elim-blowup}\label{sec:computation-blow-up}

As in \fomll sequent calculus, cut elimination of Girard nets is non-local due to global
substitution of terms during cut elimination. The top half of Figure~\ref{fig:cut-elim-comparison}
showed an example.
Cut elimination is exponential time and space because
exponential growth can arise from iterating substitutions such as
$x\,\shortmapsto\, fxx$ which duplicate terms.
For example, let $G$ be the Girard net
\begin{center}\v2
  \begin{math}
    \GexponentialChunk{left1}{right1}{\pp}{P}{\pp}{P}{f}{x}{\gx}
  \end{math}
\v2\end{center}
where $f$ is a binary function symbol.
Let $G^n$ be the result of cutting $n$ copies of $G$ against one
another, using $n{}-{}1$ cuts, each between copies of
$\forall x \, \pp x$ and $\exists \gx\,P \gx$ (renaming variables as
necessary to preserve uniqueness of eigenvariables).
\figexp{}%
For example, the first Girard net shown in Figure~\ref{fig:cut-elim-complexity-comparison} is
$G^4$.
The cut-free normal form $|G^n|$ of $G^n$ contains a term with $2^n$
occurrences of $x$, illustrating how cut elimination blows up the size
of a Girard net exponentially.
The second Girard net shown in Figure~\ref{fig:cut-elim-complexity-comparison} illustrates $|G^4|$; observe the right-most term with $2^4\m3=\m3 16$ occurrences of $x$.
For comparison, the bottom half of Figure~\ref{fig:cut-elim-complexity-comparison} shows the corresponding cut elimination for unification nets, which is local and linear-time.

\section{Exponential size blow-up in Girard nets}\label{sec:size-blow-up}

The following example demonstrates the size blow-up of Girard nets.
In fact, since it has no multiplicative connective, it also shows that Girard's first-order additive nets \cite{Gir96} with explicit existential witnesses suffer from the same size blow-up; indeed, it shows that even quantifier-only sequent calculus has the blow-up.
Let $\idot$ be an infix binary function symbol, and define $\alpha_n$ and $\beta_n$ as the $n$-ary predicates
\[\newcommand\comsep{\h{.6},\h{1.2}}
  \begin{array}{rcr@{(\,}c@{\comsep}c@{\comsep}c@{\comsep}c@{\comsep\ldots\;\;)}}
    \alpha_n &=& \pp  &  x_1  & x_1 \idot x_1 &      x_3     & x_3 \idot x_3 \\
    \beta_n  &=&  P   &   c   &       x_2     & x_2\idot x_2 &      x_4
  \end{array}
\]
where $c$ is a constant (nullary function), each $x_i$ is a variable, and the ellipsis terminates at the $n\nth$ argument of the predicate.
Define the two-formula sequent
\[
\Gamma_n
\h4=\h4
\exists x_1\exists x_3\exists x_5\ldots \alpha_n\;\com\;\; \exists x_2\exists x_4\exists x_6\ldots \beta_n
\]
where neither quantifier series extends beyond $\exists x_n$.
The unique cut-free Girard net $G_n$ on $\Gamma_n$ has one axiom link and $n$ $\exists$-links,
and grows exponentially with $n$ because its axiom link contains $2(2^n-1)$ copies of the constant $c$.
For example, here are the axiom links of $G_1$, $G_2$, $G_3$ and $G_4$, respectively:
\begin{center}\vspace{1ex}\scalebox{.93}{\(
\hh{5}\begin{array}{c}
\blowupAxiomRuleOne   \\[3ex]
\blowupAxiomRuleTwo   \\[3ex]
\blowupAxiomRuleThree \\[3ex]
\blowupAxiomRuleFour  \\[2ex]
\end{array}\hh{5}\)}\end{center}
Figure~\ref{fig:size-blow-up} depicts $G_4$ in full.

\section{Intuition for cuts as existentially closed tensors}\label{sec:cutex-intuition}

This appendix provides some proof-theoretic intuiton behind treating
cut as an existentially closed tensor:
\begin{center}\begin{math}
  \h4\cut{\,A\,}{\:\AA\:} \h4 \approx \h4 \ex {\vec x} (A\tensor \AA)
\end{math}\v1\end{center}
Consider the proof below-left, with a conventional cut rule (without
the explicit cut $\cut{A}{\AA}$ after the rule).
\begin{center}\vv2\hh4\begin{math}
\forallrule
  {
    \cutrule
        {
          \axiomrule{\pp x\com P x}
        }
        {
          \existsrule
              {\axiomrule{\pp x\com  Px}}
              {\pp x\com \ex z Pz}
        }
        {\pp x\com \ex z Pz}
  }
  {\all x \pp x\com \ex z Pz}
\h5
\forallrule
  {
    \tensorrule
       {
         \axiomrule{\pp x\com P x}
       }
       {
         \existsrule
            {\axiomrule{\pp x\com P x}}
            {\pp x\com \ex zP z}
       }
       {\pp x\com  P x\tensor \pp x\com  \ex zP z}
  }
  {\all x \pp x\com P x\tensor \pp x\com \ex zP z}
\h5
\forallrule
  {
    \existsrule
       {
         \tensorrule
            {\axiomrule{\pp x\com P x}}
            {
               \existsrule
                  {\axiomrule{\pp x\com P x}}
                  {\pp x\com \ex zP z}
            }
            {\pp x\com P x\tensor \pp x\com \ex zP z}
       }
       {\pp x\com \exists y(P y\tensor \pp y)\com \ex zP z}
  }
  {\all x \pp x\com \exists y(P y\tensor \pp y)\com \ex zP z}
\end{math}\hh3\v1\end{center}
Were we to naively replace the cut rule by a $\tensor\mm2$-rule,
following the standard propositional recipe of encoding a cut as a
tensor, we would obtain the ill-formed proof above-center: the
$\forall$-rule fails the side condition on free variables because $x$
is free in $P x\tensor \pp x$.  The conventional cut rule (in the
proof above-left) hides the free $x$ in the cut formulas $P x$ and
$\pp x$, thereby enabling the subsequent $\forall$-rule.  By adding an
$\exists$-rule after the $\tensor\mm2$-rule, as above-right, we
achieve a similar hiding of $x$ to enable the $\forall$-rule.

\small
\bibliographystyle{alpha}
\bibliography{/Users/dominic/0/tex/bib/main}

\end{document}